\documentclass[11pt]{article}
\usepackage{amsmath,amssymb,amsthm}
\setlength{\hoffset}{-1.1in}
\addtolength{\textwidth}{4.5cm}
\addtolength{\textheight}{3.5cm}
\addtolength{\voffset}{-0.6in}

\newcommand{\re}{\mathbb{R}}
\newcommand{\co}{\mathbb{C}}

\newcommand{\T}{\mathrm{T}}
\newcommand{\D}{\mathrm{D}}
\newcommand{\R}{\mathrm{R}}
\newcommand{\rp}{\mbox{Re}}

\long\def\symbolfootnote[#1]#2{\begingroup%
\def\thefootnote{\fnsymbol{footnote}}\footnote[#1]{#2}\endgroup}

\newtheorem{thm}{Theorem}[section]
\newtheorem{prop}[thm]{Proposition}
\newtheorem{lem}[thm]{Lemma}
\newtheorem{cor}[thm]{Corollary}

\theoremstyle{definition}

\newtheorem{defn}[thm]{Definition}

\theoremstyle{remark}

\newtheorem{rem}[thm]{Remark}
\usepackage{comment}

\title{On solvability of nonlinear partial differential systems of any order in the complex plane}

\author{Yifei Pan}

\begin{document}

\maketitle
\begin{center}
\emph{Dedicated to the memory of Tom Wolff}
\end{center}
\begin{abstract}
We prove a general existence theorem for nonlinear partial differential systems of any order in one complex variable.
A special case of first order contains a well-known theorem of Nijenhuis and Woolf concerning local existence of J-holomorphic curves
on almost complex manifolds. As an application to differential geometry, we prove that non constant harmonic map always exists locally from a Riemann
surface to a Riemannian manifold with a prescribed tangent vector at a given point.
\end{abstract}

\large

\section{Introduction}\label{sec0}\symbolfootnote[0]{MSC 2010: 35G20 (Primary); 32G05, 30G20 (Secondary)}

Ever since the famous example of H. Lewy [L], local solvability of partial differential systems of equations has been a fundamental problem, and there have been important works for examples by Nirenberg-Treves [NT], Beals-Fefferman [BF]and H\" ormander [H]. A series of more recent important  works have been done by Lerner [LE1] and Decker [D]. For an extensive account of the subject, we refer to Lerner's survey up to 2002 [LE2].

In this paper we consider the nonlinear problem for existence only in dimension two from the point view of complex analysis. Our idea is motivated from the study of J-holomorphic curves on an almost complex manifold. Since the fundamental work of M. Gromov, J-holomorphic curves have been extensively studied on almost complex manifolds and applied to Symplectic topology as an important tool. On the other hand, local existence of J-holomorphic curves on a almost complex manifold is guaranteed by a
well-known and classical theorem of Nijenhuis and Woolf [NW], which, in turn,  is equivalent to local existence of a special
form of first order differential
system in terms of Cauchy-Riemann operator of one complex variable.

The purpose of this paper is to prove a general local  and global existence theorem of partial differential system of any order
in one complex variable, which in
particular extends the theorem of Nijenhuis-Woolf. Our method is rather effective for Laplace operator. Namely, we prove that any system with a power of Laplace operator $\Delta^m$ as principle part can be always solvable locally for any jets of order $2m-1$ at the origin, both for complex valued solutions if system is complex or real valued solutions if the system is real valued (see Theorem 1.5, 1.6 and 1.7). In other words, we consider the following as one of the main results of this paper.

The first result is concerned with local existence.

\bigskip

\noindent
{\bf Theorem A.} {\sl Let $A=(A_1, A_2,..., A_N)$ be any function of class $C^{k}(k\geq 2)$ in its variables in $\re^M$ for some $M$. Let $p(z)$ be any polynomial of degree less or equal to $2m-1$. Then the system of $m$-Laplace, $u:\{z\in\co:|z|\leq R\}\to \re^N$,
$$\Delta^m u=A(z,u,\nabla u, ..., \nabla^{2m-1}u)$$
has solutions of class $C^{2m+k}$ in $\{z\in\co:|z|\leq R\}$ for sufficiently small values of $R$ and the solution $u=p(z)+O(|z|^{2m})$ near the origin.
}
\bigskip

When the system is autonomous, we can find global solutions.
\bigskip

\noindent
{\bf Theorem B.} {\sl Let $A=(A_1, A_2,..., A_N)$ be any function of class $C^{k}(k\geq 2)$ in its variables in $\re^M$ for some $M$. Assume that
$$A(0)=0, \mbox{ } \nabla A(0)=0.$$
 Then the autonomous system of $m$-Laplace, $u:\{z\in\co:|z|\leq R\}\to \re^N$,
$$\Delta^m u=A(u,\nabla u, ..., \nabla^{2m-1}u, \nabla^{2m}u)$$
has solutions of class $C^{2m+k}$ in $\{z\in\co:|z|\leq R\}$ for any given $R$ with vanishing order of $2m$ at the origin.
}

\bigskip

We remark that it is a classical that the equation $\Delta u=e^{2u}$ has no solutions in the whole plane $\co$ due to Ahlfors [A], and local existence puts constrain on the radius due to Osserman [O]. Therefore, the vanishing condition in Theorem B can't be dropped, and the existence of radius in Theorem A can't be arbitrarily large. As a simple example, we know the equation $\Delta n(z)+|\nabla n(z)|^2n(z)=0$ has a singular solution $n(z)=\frac{z}{|z|}$ with singularity at $z=0$. But by Theorem B, the equation has smooth solutions in any disk centered at the origin of vanishing order $2$ at the origin.

As an application to differential geometry, we prove a local existence of non-trivial harmonic map
from a Riemann surface to a Riemannian manifold. More precisely, we prove
\bigskip

\noindent
{\bf Theorem C.} {\sl
Let $S$ be a Riemann surface and $N$ be a Riemannian manifold of class $C^{3}$. Let $z_0\in S$, and $p\in N$ and $v\in T_pN$. Then there is a local harmonic
map $\phi$ of class $C^2$ from a neighborhood of $z_0$ to $N$ such that $\phi(z_0)=p$ and $d\phi(z_0)(\frac{\partial}{\partial x}) =v$.}
\bigskip

This result does not seem to have appeared in the literature given the fact that there have been extensive works on closed harmonic maps. This theorem allows one to define Kobayashi metric on the tangent bundle of any Riemannian manifold through local harmonic maps, which could potentially paly a similar role as J-holomorphic curves do on a almost complex manifold. The same result of Theorem B is also true for
bi-harmonic maps (but not harmonic) or m-harmonic maps. We will make some discussion in this paper, and will continue the study in hoping to obtain more significant results in a future paper.

Finally we state the existence of ordinary differential system in $\bar\partial$ in one complex variable.

\bigskip

\noindent
{\bf Theorem D.} {\sl Let $F(z, \eta_0, \eta_1,...,\eta_{m-1})=(F^1, ..., F^n)$ be any function of $C^k$ smooth in its variables with $z\in\co, \eta_j\in\co^n$. The following differential system : $f=(f^1, ..., f^n)$
$$\frac{\partial^m }{\partial\bar z^m}f(z)=F(z, f(z), \frac{\partial }{\partial\bar z}f(z), ...,\frac{\partial^{m-1}}{\partial\bar z^{m-1}}f(z))$$
$$\frac{\partial^{i+j} }{\partial^i\partial\bar z^j}f(0)=a_{ij}\in \co^n,\mbox{ } i+j\leq m-1$$
has a solution of class $C^{m+k}$ near the origin.}

\bigskip
We note the solution is not unique unlike in ordinary differential equation of one real variable, however we will prove that the equation satisfies the unique continuation property in [P2].
\bigskip

More specific general results are given in the following subsections.
\subsection{Nonlinear systems-A general case}
First we introduce some simple notations. Let $D$ denote the closed disk $\{z\in\co\mid|z|\leq R\}$, $D'$ denote the closed disk $\{z\in\co\mid|z|\leq R'\}$ and $D'^n$ be the polycylinder of radius $R'$ in $\co^n$. Let $m$ be a positive integer. For each $k$: $1\leq k\leq m$, we denote $\mathcal{D}^k v$
as a vector in $\co^{2^k}$ with entries $\partial^\mu\bar\partial^\nu v$ where $\mu+\nu=k$ where $v$ is a complex valued function on $D$. Here we use the standard complex derivatives for partial derivatives as
$\partial=\frac{1}{2}(\frac{\partial}{\partial x}-i\frac{\partial}{\partial y}), \mbox{    }\bar\partial=\frac{1}{2}(\frac{\partial}{\partial x}+i\frac{\partial}{\partial y}),$
where $z=x+iy$.
Let $u$ a map from $D$ to $\D'^n$ with $u=(u^1,..., u^n)$. We denote $\mathcal{D}^k u$ as $(\mathcal{D}^k u^1,..., \mathcal{D}^k u^n)$, which is a vector in $\co^{2^kn}$.

Let nonnegative integers $\mu,\nu$ such that $\mu+\nu=m$, which will be fixed throughout the paper.
Let $\Omega$ be the domain
$$\Omega=D\times D'^n\times\co^{2n}\times\cdot\cdot\cdot\times\co^{2^{m-1}n}\times\co^{2^{m}n}$$
with coordinates as $(z, \eta_0, \eta_1, ..., \eta_{m-1}, \eta_m)$.  Also the norm of a vector is taken as the max norm for
any dimensional vector space. If $v\in \co^N$, then $|v|=\max_{1\leq i\leq n}|v^i|$. The notation $\partial_{\eta_m}a$ below is understood as a vector of partial derivatives of
$a$ with respect to variables in $\eta_m$, and so are similar notations. Also, it should be noted that a function $a(z, \eta_0, \eta_1, ...,\eta_m)$ is understood as
a function $a(z, \eta_0, \eta_1, ...,\eta_m,\bar z, \bar\eta_0, \bar\eta_1, ...,\bar\eta_m)$ in this paper. The following are the main results of this paper.

\begin{thm}\label{thm0}
   Let $a^i(z, \eta_0, \eta_1, ..., \eta_{m}):\Omega\to\co$ $(i=1,...,n)$ be functions of class $C^{k}(k\geq 2)$. Assume that
\begin{eqnarray}
    a(0)&=&\partial_{\eta_m} a(0)=\bar\partial_{\eta_m }a(0)=0,\\
    \partial_{\eta_m}\partial_{\eta_m} a(0)&=&\partial_{\eta_m}\bar\partial_{\eta_m} a(0)=\bar\partial_{\eta_m}\bar\partial_{\eta_m} a(0)=0.
\end{eqnarray}
  Then the following general partial differential system: $u=(u^1, ..., u^n)$
\begin{eqnarray}
  \partial^\mu\bar\partial^\nu u&=&a(z, u, \mathcal{D}^1 u, ...,\mathcal{D}^{m-1} u, \mathcal{D}^m u)
  \end{eqnarray}
  has solutions of class $C^{m+k-1+\alpha}$ of vanishing order $m$ at $0$ for sufficiently small values of $R$ for any $\alpha$ $(0<\alpha<1)$.

\end{thm}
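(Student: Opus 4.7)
The plan is to invert the principal part $\partial^\mu\bar\partial^\nu$ by an integral operator, reformulate (3) as a fixed-point equation, and apply Banach's contraction principle on a small ball in a H\"older space of $\co^n$-valued functions vanishing to order $m$ at the origin. Since $\partial$ and $\bar\partial$ commute, iterating the Cauchy transform $Kf(z)=-\frac{1}{\pi}\int_D\frac{f(\zeta)}{\zeta-z}\,dA(\zeta)$ (a right inverse of $\bar\partial$) $\nu$ times, composing with $\mu$ applications of its complex conjugate (a right inverse of $\partial$), and finally subtracting the Taylor polynomial of degree $<m$ at $0$, produces an operator $T_{\mu,\nu}$ on $D=\{|z|\le R\}$ that right-inverts $\partial^\mu\bar\partial^\nu$ and whose image vanishes to order $m$ at the origin. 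Standard H\"older estimates for iterated Cauchy transforms (as in Vekua's monograph, combined with Calder\'on--Zygmund bounds) give
$$\|T_{\mu,\nu}f\|_{C^{m+\alpha}(D)}\le C\|f\|_{C^{\alpha}(D)},$$
together with $\|\mathcal{D}^j T_{\mu,\nu}f\|_{C^0(D)}\le CR^{m-j}\|f\|_{C^\alpha(D)}$ for $0\le j<m$, a consequence of the order-$m$ vanishing at $0$ and Taylor's formula.

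The system (3) is then equivalent to the integral equation $u=N(u):=T_{\mu,\nu}\bigl[a(z,u,\mathcal{D}^1 u,\ldots,\mathcal{D}^m u)\bigr]$. On the ball
$$B_\epsilon=\{u\in C^{m+\alpha}(D,\co^n):u=O(|z|^m),\ \|\mathcal{D}^m u\|_{C^\alpha(D)}\le\epsilon\},$$
the arguments $|z|,|u|,\ldots,|\mathcal{D}^{m-1}u|$ are all $O(R)$ while $|\mathcal{D}^m u|\le\epsilon$. Taylor expanding $a$ in $\eta_m$ at the origin and invoking (1)--(2) yields, uniformly on a fixed small neighborhood of $0$,
$$|a(z,\eta_0,\ldots,\eta_m)|\le C\bigl(|z|+|\eta_0|+\cdots+|\eta_{m-1}|\bigr)\bigl(1+|\eta_m|+|\eta_m|^2\bigr)+C|\eta_m|^3,$$
$$|\nabla_{\eta_m} a(z,\eta_0,\ldots,\eta_m)|\le C\bigl(|z|+|\eta_0|+\cdots+|\eta_{m-1}|\bigr)+C|\eta_m|^2.$$
For $u\in B_\epsilon$ the first bound together with the $C^k$-regularity of $a$ gives $\|a(z,u,\ldots,\mathcal{D}^m u)\|_{C^\alpha(D)}\to 0$ as $(R,\epsilon)\to 0$, so $N$ carries $B_\epsilon$ into itself once $R,\epsilon$ are small enough. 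A mean-value computation, combining the second bound with the Lipschitz character of $\partial_{\eta_j}a$ for $j<m$ and of $\partial_z a$, yields
$$\|a(z,u,\ldots,\mathcal{D}^m u)-a(z,v,\ldots,\mathcal{D}^m v)\|_{C^\alpha(D)}\le C(R+\epsilon^2)\|u-v\|_{C^{m+\alpha}(D)},$$
so $N$ is a contraction on $B_\epsilon$ after one further shrinking of $R$ and $\epsilon$. Banach's fixed point theorem supplies the required solution.

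The central difficulty, and the place where hypotheses (1)--(2) enter decisively, is the top-order derivative $\mathcal{D}^m u$: because $T_{\mu,\nu}$ provides no smoothing on this jet, a contraction cannot be produced by shrinking $R$ alone. The vanishing of both the first and the pure second $\eta_m$-derivatives of $a$ at the origin is exactly what forces the coefficient of $\mathcal{D}^m u$ in any linearization of $a$ along $B_\epsilon$ to be of size $O(R+\epsilon^2)$ and hence small. Once the fixed point $u\in B_\epsilon$ is obtained, the right-hand side $a(z,u,\mathcal{D}^1 u,\ldots,\mathcal{D}^m u)$ lies in $C^{\alpha}(D)$; differentiating the equation and applying $T_{\mu,\nu}$ iteratively to $\partial^j u$, using that $a\in C^k$ with $k\ge 2$, yields the standard Schauder bootstrap that promotes $u$ to the claimed regularity $C^{m+k-1+\alpha}$.
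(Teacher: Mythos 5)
Your proposal takes essentially the same route as the paper: invert $\partial^\mu\bar\partial^\nu$ by iterating the Cauchy transform $\nu$ times and its conjugate $\mu$ times (the paper's $T^\nu\overline T^\mu$), subtract the Taylor polynomial of degree $<m$ so the image lands in the space of functions vanishing to order $m$ (the paper's $C_0^{m+\alpha}(D)$ with norm $\|\cdot\|^{(m)}$), exploit the vanishing of the $\eta_m$-first and $\eta_m$-pure-second derivatives of $a$ at the origin to make the coefficient of the top-order increment small, and close by Banach's contraction principle on a small ball. One technical caveat: your Taylor-type bounds claim $O(|\eta_m|^2)$ for $\nabla_{\eta_m}a$ and $O(|\eta_m|^3)$ for $a$, which would require $a\in C^3$; with only $a\in C^2$ as assumed, one instead gets Peano remainders of size $o(|\eta_m|)$ and $o(|(z,\eta)|^2)$, which the paper phrases as $o(R+\gamma)$ — these weaker estimates are what both arguments actually rely on and they still suffice to make the contraction constant $<1$ after shrinking $R$ and the ball radius.
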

Actually, more is true if the conditions (1) and (2) are replaced by a smallness conditions, but they are easy to verify.
\begin{thm}\label{thm0}
   Let $a(z, \eta_0, \eta_1, ..., \eta_{m})=(a^1,...,a^n)$ be a map from $\Omega$ to $\co^n$ of class $C^{k}(k\geq 2)$. There is a (small) constant $\delta<1$ depending only on $m$ and $\epsilon$ depending on the second derivatives of $a$ such that if
\begin{eqnarray}
    |a(0)|<\epsilon\nonumber\\
    |\partial_{\eta_m} a(0)|+|\bar\partial_{\eta_m }a(0)|<\delta\nonumber\\
    |\partial_{\eta_m}\partial_{\eta_m} a(0)|+|\partial_{\eta_m}\bar\partial_{\eta_m} a(0)|+|\bar\partial_{\eta_m}\bar\partial_{\eta_m} a(0)|&<&\delta,
\end{eqnarray}
  then the system (3)
  has solutions of class $C^{m+k-1+\alpha}$ of vanishing order $m$ at $0$ for sufficiently small values of $R$.
  \end{thm}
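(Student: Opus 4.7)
\medskip

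\noindent The plan is to reduce the system to a fixed-point equation by inverting the principal part $\partial^\mu\bar\partial^\nu$, and then run a contraction mapping argument in a small ball of $C^{m,\alpha}$. The subtle point is that the nonlinearity $a$ depends on the top-order derivatives $\mathcal{D}^m u$, whose differential order matches the principal part exactly, so the iteration is \emph{critical} in scaling: one cannot obtain smallness of the contraction constant from rescaling the equation alone. The role of hypothesis (4) is precisely to force the Lipschitz constant of $a$ in the variable $\eta_m$ to remain small on a fixed neighborhood of $\eta_m = 0$, which is exactly what the critical iteration requires.

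First I would rescale by $u(z) = R^m v(z/R)$; this gives the equivalent system on the unit disk
\[
\partial^\mu\bar\partial^\nu v(\zeta) \;=\; \tilde a_R(\zeta, v, \mathcal{D}^1 v, \ldots, \mathcal{D}^m v),
\]
where $\tilde a_R(\zeta, \eta_0, \ldots, \eta_m) = a(R\zeta, R^m\eta_0, R^{m-1}\eta_1, \ldots, R\eta_{m-1}, \eta_m)$. All lower-order arguments of $a$ carry positive powers of $R$, while $\eta_m$ is untouched. Consequently, as $R \to 0$ with $\eta_0, \ldots, \eta_{m-1}$ bounded, $\tilde a_R$ converges in $C^2$ to $a(0, \ldots, 0, \eta_m)$, and by (4) together with Taylor expansion in $\eta_m$ this limit is of size $O(\epsilon + \delta|\eta_m| + \delta|\eta_m|^2)$, hence small on any fixed ball $\{|\eta_m| \leq \rho\}$ once $\epsilon, \delta, \rho$ are small.

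Next I would construct a right inverse $T$ to $\partial^\mu\bar\partial^\nu$ on the unit disk whose image consists of functions vanishing to order $m$ at the origin, by composing $\nu$ iterated Cauchy--Green transforms (for the $\bar\partial^\nu$ part) with $\mu$ of their complex conjugates (for $\partial^\mu$), then subtracting the Taylor polynomial of degree $m-1$ at $0$. Standard Schauder/Calder\'on--Zygmund estimates give a bound $\|T\|_{C^{0,\alpha} \to C^{m,\alpha}} \leq C_m$ depending only on $m$; this $C_m$ fixes the threshold $\delta$ in (4) through an inequality of the form $\delta\, C_m < 1/2$. Defining the Picard operator $G[v] = T(\tilde a_R(\zeta, v, \mathcal{D}^1 v, \ldots, \mathcal{D}^m v))$ on a small ball $B_\rho$ in the closed subspace of $C^{m,\alpha}$ consisting of functions vanishing to order $m$ at $0$, the self-map property follows from smallness of $\tilde a_R$, and the contraction property reduces to bounding the Lipschitz constant of $v \mapsto \tilde a_R(\cdot, v, \ldots, \mathcal{D}^m v)$ from $C^{m,\alpha}$ to $C^{0,\alpha}$. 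The only term in this Lipschitz constant that does not carry a factor of $R$ from the rescaling is the $\eta_m$-derivative, which by (4) and Taylor expansion is bounded by $\delta + O(\rho) + O(R)$, so choosing $R, \rho$ sufficiently small produces a genuine contraction and yields a unique fixed point $v \in C^{m,\alpha}$.

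Finally I would bootstrap regularity from $C^{m,\alpha}$ up to $C^{m+k-1+\alpha}$ by differentiating the equation in $z$ or $\bar z$: each first derivative of $u$ satisfies a system of the same general form in which the right-hand side lies in $C^\alpha$ (using $u \in C^{m,\alpha}$ and $a \in C^k$ with $k \geq 2$), so Schauder estimates applied to $\partial^\mu\bar\partial^\nu$ gain one derivative at each step, and iterating $k-1$ times exhausts the smoothness of $a$. The main obstacle throughout is precisely the critical top-order dependence: neither smoothness of $a$ nor rescaling produces a small contraction constant on its own, so the care lies in threading the smallness in (4) through the composition $T \circ (\text{Nemytskii operator of }\tilde a_R)$ so that $\delta$ balances against $C_m$; the rest is essentially classical Schauder theory.
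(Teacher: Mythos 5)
Your proposal follows essentially the same route as the paper: build the inverse of $\partial^\mu\bar\partial^\nu$ from iterated Cauchy--Green transforms $T$ and $\overline T$, subtract the order-$(m-1)$ Taylor polynomial to land in the space of functions vanishing to order $m-1$ at the origin, and close the contraction by exploiting that all arguments $\eta_0,\ldots,\eta_{m-1}$ pick up positive powers of $R$ while the ``critical'' $\eta_m$-dependence must be tamed by the smallness hypothesis (4). Your rescaling $u(z)=R^m v(z/R)$ is the cosmetic twin of the paper's device of working on the $R$-disk with the $\|\cdot\|^{(m)}$-norm and Lemma 2.4, which produce exactly the same $R^{m-j}$ factors on the $j$-th order slots; the one place where you are actually more explicit than the paper is the Schauder bootstrap from $C^{m+\alpha}$ to $C^{m+k-1+\alpha}$, which the paper invokes implicitly but does not spell out.
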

We point out that the vanishing conditions (1) and  (2) can't be dropped due to the counterexample of Mizohata (see below 1.6).
When the right hand side of system (3) is independent of $m$th derivatives, the solution can be chosen freely for orders up to $m-1$ at the origin. The conditions (1) and (2) become unnecessary.
We point out that this result is not a consequence of the above theorem but the method of its proof.

\begin{thm}\label{thm0}
   Let $a(z, \eta_0, \eta_1, ..., \eta_{m-1})=(a^1,...,a^n)$ be a map from $\Omega$ to $\co^n$ of class $C^{k}(k\geq 2)$ that is independent of $\eta_m$. Let $p^i(z,\bar z)$ be any polynomial of degree at most $m-1$ such that $p^i(0,0)\in \mathrm{Int}(D')$ $(i=1,...,n)$.
  Then the following partial differential system: $u=(u^1, ..., u^n)$
\begin{eqnarray}
  \partial^\mu\bar\partial^\nu u&=&a(z, u, \mathcal{D}^1 u, ...,\mathcal{D}^{m-1} u)
  \end{eqnarray}
  has non-constant solutions of class $C^{m+k+\alpha}$ for sufficiently small values of $R$ for any $\alpha$ $(0<\alpha<1)$. The solutions can be chosen near the origin
  $$u^i=p^i(z,\bar z)+O(|z|^m).$$

\end{thm}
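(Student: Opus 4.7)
The plan is to reduce to the zero-jet case and then run the fixed-point argument underlying Theorems 1.3--1.4. Write $u = p + v$, where $p=(p^1,\dots,p^n)$ is the prescribed polynomial of degree $\leq m-1$ and $v$ is the new unknown, to be found with vanishing order $m$ at the origin. Since $\mu + \nu = m$ and $\deg p \leq m-1$, we have $\partial^\mu\bar\partial^\nu p\equiv 0$, so (5) transforms into
\[
\partial^\mu\bar\partial^\nu v \;=\; \tilde a\bigl(z,\,v,\,\mathcal{D}^1 v,\dots,\mathcal{D}^{m-1}v\bigr),
\]
where $\tilde a(z,\zeta_0,\dots,\zeta_{m-1}) := a(z,\,p+\zeta_0,\,\mathcal{D}^1 p+\zeta_1,\dots,\mathcal{D}^{m-1}p+\zeta_{m-1})$ is of class $C^k$ on a neighbourhood of $\{\zeta = 0\}$ in $\Omega$, using $p(0)\in\mathrm{Int}(D')$ to keep $p+\zeta_0\in D'^n$ for $R$ and $\zeta_0$ small. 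The essential point is that $\tilde a$ carries no $\eta_m$ slot at all, so the $\eta_m$-derivative conditions in (1)--(2) or (4) are vacuous, while $\tilde a(0) = a(0,p(0),\mathcal{D}^1 p(0),\dots)$ is merely bounded rather than small.

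Next I set up the integral-operator iteration. Let $\T_{\bar\partial}g(z) = -\pi^{-1}\int_D g(\zeta)(\zeta-z)^{-1}\,dA(\zeta)$, let $\T_\partial$ be its conjugate analogue on $D = \{|z|\leq R\}$, set $\T := \T_\partial^{\,\mu}\T_{\bar\partial}^{\,\nu}$, and subtract off the degree-$(m-1)$ Taylor polynomial of $\T g$ at $0$ so its image vanishes to order $m$ there; then $\partial^\mu\bar\partial^\nu\T g = g$, and a Cauchy-kernel computation yields
\[
\bigl\|\mathcal{D}^j \T g\bigr\|_{C^0(D)} \;\leq\; C_{m,j}\,R^{\,m-j}\,\|g\|_{C^0(D)}, \qquad 0\leq j\leq m-1,
\]
together with the analogous Hölder estimates. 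Define $\Phi(v) := \T\bigl[\tilde a(\cdot, v, \mathcal{D}^1 v,\dots,\mathcal{D}^{m-1}v)\bigr]$ on the ball $\mathcal{B}_\rho := \{v\in C^{m-1+\alpha}(D,\co^n) : \|v\|_{C^{m-1+\alpha}}\leq \rho,\; v \text{ vanishes to order } m \text{ at } 0\}$. Since $\tilde a\in C^k$ with $k\geq 2$, it is bounded and Lipschitz on the image of $\mathcal{B}_\rho$ uniformly in $R$, so the operator estimate gives $\|\Phi(v)\|_{C^{m-1+\alpha}}\leq C(\rho)R$ and $\|\Phi(v)-\Phi(w)\|\leq C'(\rho)R\,\|v-w\|$. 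For $R$ small, $\Phi$ contracts on $\mathcal{B}_\rho$; the fixed point $v$ yields $u = p + v$ satisfying $u^i = p^i + O(|z|^m)$, and the full regularity $C^{m+k+\alpha}$ follows by a standard elliptic bootstrap gaining one derivative per pass, the extra derivative over Theorem 1.3 being exactly due to the absence of $\mathcal{D}^m v$ on the right-hand side.

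The main obstacle is a clean derivation of the operator bound for the polynomially-subtracted iterated Cauchy--Green operator $\T$: one must simultaneously retain the full factor $R^{m-j}$ in every lower derivative, keep the Hölder estimates sharp enough for the contraction in $C^{m-1+\alpha}$, and verify the correct vanishing of $\T g$ to order $m$ at the origin. Once this estimate is in place, the conceptual reason for not needing the smallness assumptions (1)--(2) or (4) becomes transparent: because $\tilde a$ only sees $\mathcal{D}^{\leq m-1}v$, the full $R$-gain from $\T$ is available to absorb any bounded size of $\tilde a(0)$ and, with it, any prescribed bounded $(m-1)$-jet $p$ -- exactly the content of the theorem.
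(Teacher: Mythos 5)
Your proposal is correct and follows essentially the same route as the paper: translate by $p$ to reduce to the zero-jet case (using $\partial^\mu\bar\partial^\nu p\equiv 0$ and $p(0)\in\mathrm{Int}(D')$), observe that because $\tilde a$ has no $\eta_m$ slot all the smallness hypotheses on $\eta_m$-derivatives become vacuous (the paper records this as $B(R,\gamma)=H_1^B=H_\alpha^B=0$), and then run the iterated Cauchy--Green fixed-point scheme in a space of functions vanishing to order $m$ at the origin, with the $R^{m-j}$ gain on lower derivatives absorbing the bounded size of $\tilde a(0)$. Your choice of norm ($C^{m-1+\alpha}$ with a vanishing condition) is slightly weaker than the paper's $\|\cdot\|^{(m)}$ on $C_0^{m+\alpha}$, but since the right-hand side only sees $\mathcal{D}^{\leq m-1}v$ this does not affect the contraction argument, and the operator estimates you flag as "the main obstacle" are exactly the content of the paper's Lemmas 2.3--2.4 and Theorem 3.15.
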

When the function $a$ is independent of $z$, or the system (3) is so-called autonomous, we can provide global solutions of vanishing order $m$ at the origin. Let $K$ be a polycylinder in $\Omega$ containing the origin. We note $K$ is a compact set.
\begin{thm}\label{thm0}
   Let $a(\eta_0, \eta_1, ..., \eta_{m})=(a^1,...,a^n)$ be a map from $\Omega$ to $\co^n$ of class $C^{k}(k\geq 2)$ that is independent of $z$; let $\alpha$
   be given $(0<\alpha<1)$.
   There is a (small) constant $\tau$ depending only on $m, \alpha, R$ and there is an $\epsilon$ dependent on $\tau$ and the maximum norm of second derivatives
   of $a$ over $K$ such that if
\begin{eqnarray}
   |a(0)|<\epsilon,\mbox{  and  } |\nabla a(0)|<\tau
\end{eqnarray}
  then, for any homogenous polynomial map $P_m(z,\bar z)$of degree $m$ without term $z^\mu\bar z^\nu$ for which all coefficients have absolute value less that
  $\epsilon$, the system
  \begin{eqnarray}
  \partial^\mu\bar\partial^\nu u&=&a(u, \mathcal{D}^1 u, ...,\mathcal{D}^{m} u)
  \end{eqnarray}
  has solutions of class $C^{m+k-1+\alpha}$ in the whole $D$ such that at the origin: $\partial^i\bar\partial^ju(0)=\partial^i\bar\partial^jP_m(0)$ for all $i+j=m$
  and $i\not=\mu, j\not=\nu$ and $\partial^i\bar\partial^ju(0)=0$ for $i+j\leq m-1$.
\end{thm}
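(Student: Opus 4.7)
The plan is to reformulate the system as a fixed-point problem for the perturbation $v:=u-P_m$ on the full disk $D$, and to solve it by a contraction argument in a H\"older space. Since $P_m$ is a homogeneous polynomial of degree $m$ with no $z^\mu\bar z^\nu$ monomial, $\partial^\mu\bar\partial^\nu P_m\equiv 0$ (any monomial $z^i\bar z^j$ of total degree $m$ is killed by $\partial^\mu\bar\partial^\nu$ unless $(i,j)=(\mu,\nu)$). The equation for $u$ therefore rewrites as
\begin{equation*}
\partial^\mu\bar\partial^\nu v = a(P_m+v,\mathcal{D}(P_m+v),\ldots,\mathcal{D}^m(P_m+v)),
\end{equation*}
and the prescribed jet conditions translate into $\partial^i\bar\partial^j v(0)=0$ for $i+j\le m-1$ and for $i+j=m$ with $(i,j)\ne(\mu,\nu)$.

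Next I would construct a right-inverse $T$ of $\partial^\mu\bar\partial^\nu$ on $D$ by iterating the Cauchy--Green operator. Letting $\mathcal{C} f(z) = -\frac{1}{\pi}\int_D f(w)(w-z)^{-1}\,dA(w)$ (which inverts $\bar\partial$) and $\bar{\mathcal{C}}$ be its $\partial$-analogue, the composition $T_0:=\bar{\mathcal{C}}^{\mu}\mathcal{C}^{\nu}$ satisfies $\partial^\mu\bar\partial^\nu T_0 f = f$. Subtracting from $T_0 f$ the Taylor polynomial at $0$ of degrees $0$ through $m-1$, together with the order-$m$ monomials other than $z^\mu\bar z^\nu$, produces an operator $T$ landing in the closed subspace
\begin{equation*}
X:=\{v\in C^{m+\alpha}(D,\co^n):\partial^i\bar\partial^j v(0)=0 \text{ for } i+j\le m-1 \text{ and for } i+j=m,\ (i,j)\ne(\mu,\nu)\}.
\end{equation*}
The removed monomials have total degree $\le m$ and none of the form $z^\mu\bar z^\nu$, so they are annihilated by $\partial^\mu\bar\partial^\nu$ and $T$ is still a right-inverse. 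Iterating the standard H\"older estimate for the Cauchy transform yields
\begin{equation*}
\|Tf\|_{C^{m+\alpha}(D)} \le C(m,\alpha,R)\,\|f\|_{C^{\alpha}(D)}.
\end{equation*}

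With $T$ in hand I would define $\Phi(v):=T(F(v))$, where $F(v)(z):=a(P_m+v,\mathcal{D}(P_m+v),\ldots,\mathcal{D}^m(P_m+v))$; fixed points of $\Phi$ in $X$ are precisely the sought solutions. A first-order Taylor expansion of $a$ about $0$ gives the pointwise bound
\begin{equation*}
|F(v)| \le |a(0)|+|\nabla a(0)|\cdot\|(P_m+v,\ldots,\mathcal{D}^m(P_m+v))\|+M_2\,\|(P_m+v,\ldots,\mathcal{D}^m(P_m+v))\|^2,
\end{equation*}
where $M_2:=\sup_K\|\mathrm{Hess}\,a\|$, with an analogous $C^\alpha$ estimate obtained by differencing. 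I would first fix a ball radius $\rho$ small relative to $M_2$, then choose $\tau$ so that $C(m,\alpha,R)\cdot\tau<1/4$, and finally take $\epsilon$ so small that $C(m,\alpha,R)\bigl(|a(0)|+\tau\|P_m\|_{C^m(D)}+M_2(\|P_m\|_{C^m(D)}+\rho)^2\bigr)\le\rho/2$. This forces $\Phi$ to send the $\rho$-ball in $X$ into itself; the parallel estimate for $F(v_1)-F(v_2)$ via the mean value theorem on $a$ delivers contraction. The unique fixed point $v$ yields the solution $u=P_m+v$, and a final bootstrap using the $C^k$ regularity of $a$ together with the $m$-order gain of $T$ promotes $v$ to $C^{m+k-1+\alpha}$.

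The main obstacle will be Step 2: assembling a global right-inverse $T$ with the prescribed vanishing jet at the origin and verifying the H\"older estimate with an explicit constant $C(m,\alpha,R)$ on the fixed disk $D$. Everything else is calibrated around that constant: the $\tau$ appearing in (6) must beat $C(m,\alpha,R)$, and the admissible sizes of $a(0)$ and of the coefficients of $P_m$ are then fixed in turn. This trade-off between a fixed operator norm and the smallness imposed on $a$ is precisely what upgrades the preceding local theorems to a global statement for the autonomous system.
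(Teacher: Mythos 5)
Your proof is correct and follows essentially the same route as the paper: you exploit the autonomy of $a$ to observe that the range of its arguments shrinks to the origin as the ball radius and coefficient size shrink, \emph{independently of} $R$, and you then set up a contraction on a subspace of $C^{m+\alpha}(D)$ cut out by vanishing‑jet conditions at the origin using an iterated Cauchy–Green right‑inverse of $\partial^\mu\bar\partial^\nu$ with Taylor terms subtracted. Substituting $u=P_m+v$ and writing $v=T(F(v))$ is exactly equivalent, via a change of variable, to the paper's scheme $u=\psi+\mathbf{\Theta}(u)$ with $\psi=P_m$; the only cosmetic difference is that the paper works with the top‑order norm $\|\cdot\|^{(m)}$ on $C_0^{m+\alpha}(D)$ (which it first verifies is complete) rather than the full $C^{m+\alpha}$ norm, and with the explicit constants $\delta(R,\gamma),\eta(R,\gamma)$ assembled in Section~4, but the order in which $\tau$, the ball radius, and $\epsilon$ are fixed and the dependency structure among them match what the statement requires.
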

The result can be actually proved for small jets of order less than $m$. That is, we can prove the following more general existence theorem.
\begin{thm}\label{thm0}
   Let $a(\eta_0, \eta_1, ..., \eta_{m})=(a^1,...,a^n)$ be a map from $\Omega$ to $\co^n$ of class $C^{k}(k\geq 2)$ that is independent of $z$; let $\alpha$
   be given $(0<\alpha<1)$.
   There is a (small) constant $\tau$ depending only on $m, \alpha, R$ and there is an $\epsilon$ dependent on $\tau$ and the maximum norm of second derivatives
   of $a$ over $K$ such that if
\begin{eqnarray}
   |a(0)|<\epsilon,\mbox{  and  } |\nabla a(0)|<\tau
\end{eqnarray}
  then, for any polynomial map $p(z)$ of degree less than or equal to $m-1$ whose coefficients are less than $\epsilon$ in absolute value, and any homogenous polynomial map $P_m(z,\bar z)$of degree $m$ without term $z^\mu\bar z^\nu$ for which all coefficients have absolute value less than
  $\epsilon$, the system (7)
  has solutions of class $C^{m+k+\alpha}$ in the whole $D$ such that at the origin: $\partial^i\bar\partial^ju(0)=\partial^i\bar\partial^jP_m(0)$ for all $i+j=m$
  and $i\not=\mu, j\not=\nu$ and $\partial^i\bar\partial^ju(0)=\partial^i\bar\partial^jp(0)$ for $i+j\leq m-1$.
\end{thm}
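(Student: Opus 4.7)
The plan is to reduce the theorem to the situation already handled in Theorem 1.6 by absorbing the prescribed lower-order jet into the unknown. I would write $u(z) = p(z) + w(z)$ and derive the equation for $w$. Since $\deg p \le m-1$ while $\mu+\nu = m$, we have $\partial^\mu \bar\partial^\nu p \equiv 0$ and $\mathcal{D}^m p \equiv 0$; moreover the prescribed jet of $u$ at $0$ translates exactly into the jet conditions appearing in Theorem 1.6 for $w$, namely $\partial^i\bar\partial^j w(0) = 0$ for $i+j \le m-1$ and $\partial^i\bar\partial^j w(0) = \partial^i\bar\partial^j P_m(0)$ for $i+j = m$ with $i \ne \mu$, $j \ne \nu$. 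The equation for $w$ reads
\begin{equation*}
\partial^\mu \bar\partial^\nu w = \tilde{a}(z, w, \mathcal{D}^1 w, \ldots, \mathcal{D}^m w),
\end{equation*}
where $\tilde a(z, \xi_0, \ldots, \xi_m) := a(p(z)+\xi_0, \mathcal{D}^1 p(z)+\xi_1, \ldots, \mathcal{D}^{m-1}p(z)+\xi_{m-1}, \xi_m)$.

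Next I would check that the smallness hypotheses transfer. Because every coefficient of $p$ has absolute value at most $\epsilon$, the quantities $p(z), \mathcal{D}^1 p(z), \ldots, \mathcal{D}^{m-1}p(z)$ stay in an $O(\epsilon)$-neighborhood of the origin of the fiber, uniformly for $z \in D$ of the prescribed fixed radius $R$. Using $|a(0)| < \epsilon$, $|\nabla a(0)| < \tau$ and the uniform bound on the second derivatives of $a$ over the compact polycylinder $K$, a first-order Taylor expansion gives a constant $C$ (depending only on $m$, $R$ and the $C^2$-norm of $a$ over $K$) such that $|\tilde a(z,0)| \le C\epsilon$ and $|\partial_\xi \tilde a(z,0)| \le C(\tau + \epsilon)$ uniformly in $z \in D$. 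Shrinking $\epsilon$ and $\tau$ recovers whatever fiberwise smallness Theorem 1.6 required.

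The main obstacle is that $\tilde a$ now genuinely depends on $z$, whereas Theorem 1.6 is stated in the autonomous case. My plan here is to revisit, rather than invoke, its proof: I expect it to be a contraction-mapping argument in a ball of the Hölder space $C^{m+k-1+\alpha}(D)$, driven by a right inverse of $\partial^\mu\bar\partial^\nu$ specifically designed to preserve the vanishing and $P_m$ conditions at the origin. The estimates driving that contraction are pointwise in $z$ and only use fiberwise smallness of the nonlinearity; the $z$-dependence picked up from $\tilde a$ comes through polynomials of controlled degree with coefficients of size $O(\epsilon)$, and so contributes only a factor controlled by the Hölder norm of $\tilde a(\cdot,0)$ and $\partial_\xi \tilde a(\cdot,0)$ on $D$, all uniformly small. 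Hence, after possibly decreasing $\epsilon$ and $\tau$, the same iteration converges and yields a $C^{m+k+\alpha}$ solution $w$ of the transformed problem realizing the required jet; setting $u = p + w$ then gives the desired solution of (7).
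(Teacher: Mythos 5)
Your proposal matches the paper's proof of this theorem: both substitute $u = p + w$, use $\partial^\mu\bar\partial^\nu p \equiv 0$ and $\mathcal{D}^m p \equiv 0$ to reduce to the equation $\partial^\mu\bar\partial^\nu w = \tilde a(z, w, \ldots, \mathcal{D}^m w)$ with $w$ satisfying the jet conditions of the autonomous global theorem, and then re-run that theorem's contraction argument after checking that the newly induced $z$-dependence of $\tilde a$ contributes constants $C(R,\gamma)$, $H_\alpha^C[R,\gamma]$, $H_1^C[R,\gamma]$ that are $O(\epsilon)$ because $p$ and its derivatives have coefficients of size $\epsilon$. The only discrepancy is a citation slip — the autonomous result you call ``Theorem 1.6'' is Theorem 1.4 in the paper's numbering — and your sign in the definition of $\tilde a$ is in fact the consistent one (the paper writes $a(\tilde u - p(z),\ldots)$ alongside $u = \tilde u + p(z)$, which is an evident typo).
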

\subsection{Real systems}
In the previous sections, solutions found are generally complex valued (vector). However when $\mu=\nu$, we can find real-valued solutions that could be useful
for studying Riemannian geometry like harmonic maps and biharmonic maps and to define Kobayashi metric on tangent bundle.
Here we will consider everything in real variables. First we consider that $D$ and $D'$ are defined in $\re^2$, then define
$$\Omega_{\re}=D\times D'^n\times\re^{2n}\times\re^{2^2n}\times\cdot\cdot\cdot\times\re^{2^{2m-1}n}.$$
\begin{thm}
Let $A^i(x,y;\xi_0,\xi_1,...,\xi_{2m-1}):\Omega_{\re}\to \re$ be functions of class $C^{k} (k\geq 2)$; let $u^i(x,y):D\to\re$
be unknown functions for $i=1,...,n$. Let $p^i(x,y)$ be real polynomial of degree at most $2m-1$ such that $p^i(0)\in \mathrm{Int}(D')$.
Then the following $m$-Laplace system equation $u=(u^1,...,u^n)$:
$$\Delta^m u^i(x,y)=A^i(x,y;u,\nabla u, \nabla^2 u,...,\nabla^{2m-1}u), \mbox{                  }i=1,...,n$$
has solutions of class $C^{2m+k+\alpha}$ for sufficiently small values of $R$ for a given $\alpha$ $(0<\alpha<1)$ so that near the origin
$$u^i(x,y)=p^i(x,y)+O(\sqrt{x^2+y^2})^{2m}.$$
\end{thm}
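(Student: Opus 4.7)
The plan is to reduce Theorem 1.7 to Theorem 1.5 via the identity $\Delta=4\partial\bar\partial$, so that $\Delta^m=4^m\partial^m\bar\partial^m$. Setting $\mu=\nu=m$ and taking the order parameter of Theorem 1.5 to be $2m$, the system $\Delta^m u^i = A^i$ rewrites as $\partial^m\bar\partial^m u^i = 4^{-m}A^i$. Every real partial derivative $\partial_x^a\partial_y^b u$ of total order $j$ is an invertible linear combination of the complex derivatives $\{\partial^\alpha\bar\partial^\beta u:\alpha+\beta=j\}$, so after relabeling the right-hand side becomes a function $\tilde a(z,u,\mathcal{D}^1 u,\ldots,\mathcal{D}^{2m-1}u)$ of class $C^k$ that is independent of $\mathcal{D}^{2m}u$. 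The given real polynomial $p^i$ of degree at most $2m-1$ is a legitimate polynomial in $z$ and $\bar z$ of the same degree.

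I would then invoke Theorem 1.5 with the above data to produce a complex-valued solution $u\in C^{2m+k+\alpha}$ with $u^i=p^i(z,\bar z)+O(|z|^{2m})$ near the origin, on a disk of sufficiently small radius. The regularity and the prescribed jet at the origin match those asserted in Theorem 1.7; the only remaining requirement is that the solution be real-valued.

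The reality step is the heart of Theorem 1.7, and is the main obstacle. Since $\mu=\nu=m$ the operator $\partial^m\bar\partial^m=4^{-m}\Delta^m$ is real, i.e.\ it sends real-valued functions to real-valued functions. The proof of Theorem 1.5 is a Schauder fixed-point argument for an operator of the form
$$T(v)=p+\mathcal{I}\bigl(\tilde a(\cdot,v,\mathcal{D}^1 v,\ldots,\mathcal{D}^{2m-1}v)\bigr),$$
where $\mathcal{I}$ is a right inverse of $\partial^\mu\bar\partial^\nu$ that produces a function of vanishing order $2m$ at the origin. In the symmetric case $\mu=\nu=m$ we may take $\mathcal{I}$ to be the iterated logarithmic potential for $\Delta^m$, corrected by a real polynomial of degree at most $2m-1$ to realize the vanishing at $0$; its integral kernel is real, so $\mathcal{I}$ maps real-valued input to real-valued output. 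Because $p$ is real by hypothesis and $A$ takes real values on real inputs, $T$ preserves the closed subspace of real-valued H\"older functions, and Schauder's theorem applied there yields a real-valued fixed point. The step requiring care is verifying that the right inverse used in Theorem 1.5 can in this symmetric case be chosen with a real kernel without disturbing the Schauder estimates; this is routine since the fundamental solution of $\Delta^m$ is real and the norm bounds in the proof of Theorem 1.5 depend only on sizes, not on the scalar field.
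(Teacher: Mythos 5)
Your reduction to the complex framework via $\Delta^m=4^m\partial^m\bar\partial^m$ with $\mu=\nu=m$, followed by a reality argument for a fixed point, is the same outline the paper follows (though the theorem you want to cite is Theorem~1.3, the non-autonomous one with jets of degree $\le m-1$, not the autonomous Theorem~1.5; and the fixed-point mechanism in that proof is a contraction via Lemma~4.1, not Schauder). Where you deviate is precisely the step you yourself flag as ``requiring care'': you propose to swap the paper's integral operator $T^\nu\overline{T}^\mu$ for an iterated logarithmic potential with a real kernel and assert that the Schauder estimates transfer ``routinely.'' That is not a free move --- the entire quantitative machinery of Sections~3 and~4 (Theorem~3.15 and the $\delta_j(R,\gamma)$ bounds) is built for $T^\nu\overline{T}^\mu$, and nothing in the paper is proved for the polyharmonic potential. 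The paper instead keeps $T^m\overline{T}^m$ (whose kernel is not real) and simply replaces $\Theta^i$ by $\Re\Theta^i=\tfrac12(\Theta^i+\overline{\Theta^i})$, together with a real homogeneous polynomial $\psi$. This is the concrete device that makes your ``routine'' claim actually routine: (i) $\Re$ commutes with $\partial^m\bar\partial^m=4^{-m}\Delta^m$ since the latter is a real differential operator, so the real-part operator still inverts the equation for real data; (ii) $\Re$ maps $[C_0^{2m+\alpha}(D)]^n$ into the closed real subspace $[RC_0^{2m+\alpha}(D)]^n$; and (iii) all of the contraction and boundedness estimates pass through unchanged because $|\Re w|\le|w|$ and $H_\alpha[\Re g]\le H_\alpha[g]$. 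So your plan is sound, but you should replace ``choose a real fundamental solution'' by ``take the real part of the existing operator,'' which is what lets you reuse the established norm estimates without redoing them.
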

This theorem when applied with $m=1$ gives a proof of Theorem B.

The following is about autonomous systems and global solutions are found.
\begin{thm}
Let $A^i(\xi_0,\xi_1,...,\xi_{2m-1},\xi_{2m}):\Omega_{\re}\times\re^{2^{2m}n}\to \re$ be functions of class $C^k (k\geq 2)$ that is independent of $x,y$; There is constant $\tau>0$ depending on $R,m,\alpha$ such that if $A^i$ satisfies
$$A^i(0)=0, |\nabla A^i(0)|<\tau,\mbox{ } i=1,...,n$$
then the following $m$-Laplace system equation $u=(u^1,...,u^n)$:
$$\Delta^m u^i(x,y)=A^i(u,\nabla u, \nabla^2 u,...,\nabla^{2m-1}u,\nabla^{2m}u), \mbox{   }i=1,...,n$$
has solutions of class $C^{2m+k+\alpha}$ in the whole $D$ for a given $\alpha$ $(0<\alpha<1)$ with vanishing order $2m$ at the origin.
\end{thm}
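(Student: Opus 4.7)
The plan is to deduce Theorem 1.7 from Theorem 1.5 (the autonomous complex system with prescribed $p$ and $P_m$), applied with the complex order parameter $m$ there replaced by $2m$ and with $\mu=\nu=m$, by means of the factorization $\Delta^m=4^m\partial^m\bar\partial^m$, and then to argue that the contraction scheme used in the proof of Theorem 1.5 preserves real-valuedness when $\mu=\nu$, so that it in fact delivers a real solution.

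First, I would rewrite the system in complex form as
$$\partial^m\bar\partial^m u^i \;=\; 4^{-m} A^i(u,\nabla u,\ldots,\nabla^{2m}u).$$
Every real derivative $\nabla^j u$ with $j\leq 2m$ is a real linear combination of the complex derivatives $\partial^\alpha\bar\partial^\beta u$ with $\alpha+\beta=j$, and conversely. Repackaging the arguments of $A$ in the complex jet variables produces a function $\tilde a^i(\eta_0,\eta_1,\ldots,\eta_{2m})$ of class $C^k$, to which Theorem 1.5 is applicable with complex order $2m$, $\mu=\nu=m$, $p\equiv 0$, and $P_{2m}\equiv 0$. The hypothesis $A^i(0)=0$ gives $\tilde a^i(0)=0$ identically, and the chain rule yields $|\nabla\tilde a^i(0)|\leq C_m|\nabla A^i(0)|<C_m\tau$. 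Choosing $\tau$ to be a small multiple of the threshold furnished by Theorem 1.5 (with the parameter $m$ there set to $2m$), the smallness condition is satisfied, and Theorem 1.5 produces a complex solution $u\in C^{2m+k+\alpha}(D)$ on the whole disk $D$, with all derivatives of order at most $2m-1$ vanishing at the origin and with $\partial^i\bar\partial^j u(0)=0$ for every pair $(i,j)$ with $i+j=2m$ and $(i,j)\neq(m,m)$; the remaining derivative $\partial^m\bar\partial^m u(0)$ is forced by the equation to equal $\tilde a(0)=0$, so $u$ vanishes to order $2m$ at the origin.

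The main obstacle is verifying that this complex solution is actually real-valued. The key observation is that with $\mu=\nu=m$ the operator $\partial^m\bar\partial^m=4^{-m}\Delta^m$ commutes with complex conjugation, and since $A^i$ is real and the change of variables from real to complex jets is real linear, the resulting $\tilde a^i$ satisfies $\overline{\tilde a^i(u,\mathcal{D}u,\ldots)}=\tilde a^i(\bar u,\overline{\mathcal{D}u},\ldots)$. The proof of Theorem 1.5 constructs $u$ as the fixed point of a contraction $u\mapsto\mathcal{G}\bigl(\tilde a(u,\mathcal{D}^1u,\ldots,\mathcal{D}^{2m}u)\bigr)$, where $\mathcal{G}$ is a right inverse of $\partial^m\bar\partial^m$ realizing the prescribed jet at $0$; this $\mathcal{G}$ is an iterated Cauchy--Green (equivalently iterated Newtonian) potential with a real integral kernel, hence maps real-valued functions to real-valued functions. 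I would therefore replicate the contraction argument on the closed subspace of real-valued maps inside the H\"older space used in Theorem 1.5. Banach's theorem applied in this closed subspace yields a real-valued fixed point, which then solves the original real system $\Delta^m u=A(u,\nabla u,\ldots,\nabla^{2m}u)$ globally on $D$, in class $C^{2m+k+\alpha}$, with vanishing order $2m$ at the origin. The essential work is bookkeeping: the analytic heart is inherited from Theorem 1.5, and what must be checked is that every step of that argument respects complex conjugation under $\mu=\nu=m$.
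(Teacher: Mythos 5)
Your overall plan---complexify via $\Delta^m=4^m\partial^m\bar\partial^m$, set $\mu=\nu=m$, rerun the contraction scheme of Theorem 1.5 on the real-valued subspace of the H\"older space, and use that $\partial^m\bar\partial^m$ commutes with conjugation---matches the paper's strategy in Section 6. But there is a genuine gap in the step where you claim the right inverse used in the proof ``is an iterated Cauchy--Green (equivalently iterated Newtonian) potential with a real integral kernel, hence maps real-valued functions to real-valued functions.'' This is false as stated. The operator actually used in the proof of Theorem 1.5 is $T^\nu\overline T^\mu$, built from $T f(z)=\frac{-1}{2\pi i}\int_D\frac{f(\zeta)\,d\bar\zeta\wedge d\zeta}{\zeta-z}$, whose kernel is genuinely complex; $T^m\overline T^m$ is not the iterated Newtonian potential (they differ by a polyharmonic term coming from the Cauchy boundary integrals $S$, $\overline S$), and for real $f$ one only has $\overline{T^m\overline T^m f}=\overline T^m T^m f$, which is generally different from $T^m\overline T^m f$ because $T$ and $\overline T$ do not commute. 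Consequently the map $\mathbf\Theta$ from (19) does not send the real subspace $[RC_0^{2m+\alpha}(D)]^n$ into itself, and Banach's fixed point theorem cannot be applied there directly as you propose.

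The missing idea is to symmetrize: replace $\Theta^i$ by $\Re\Theta^i(f)=\tfrac12\{\Theta^i(f)+\overline{\Theta^i(f)}\}$ (equivalently, work with the real-valued operator $\Xi=\overline T^mT^m+T^m\overline T^m$ in place of $T^m\overline T^m$). This does map the real Banach space to itself, and a fixed point of $u=\psi+\Re\Theta(u)$ still solves the PDE because $\partial^m\bar\partial^m$ commutes with conjugation and the right-hand side $A^i$ is real-valued, so $\partial^m\bar\partial^m(\Re\Theta^i(u))=\Re\big(\partial^m\bar\partial^m\Theta^i(u)\big)=\Re\big(4^{-m}A^i(\cdot)\big)=4^{-m}A^i(\cdot)$. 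All the contraction estimates of Sections 4--5 transfer unchanged since $\|\Re\Theta(f)-\Re\Theta(g)\|^{(2m)}\leq\|\Theta(f)-\Theta(g)\|^{(2m)}$. With this correction, the rest of your reduction to the autonomous global case (fixing $R$, shrinking $\gamma$, requiring $|\nabla A(0)|<\tau$) goes through exactly as you indicate.
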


\subsection{Quasi-linear systems}
In the paper sequential to this paper [P1], we will reduce the regularity of $a\in C^k (k\geq 2)$ when $a$ is linear in the variable $\eta_m$ to
the case $a\in C^{k,\alpha}$ where $k\geq 0, 0<\alpha<1.$ Let
$$\Omega'=D\times D'^n\times\co^{2n}\times...\times\co^{2^{m-1}n}.$$
\begin{thm}
 Let $a_{k,l}(z,\eta_1,...,\eta_{m-1}),b_{k,l}(z,\eta_1,...,\eta_{m-1}),c(z,\eta_1,...,\eta_{m-1})$ be $n\times n$ matrices and $b(z,\eta_1,...,\eta_{m-1})$ be
an $n\times 1$ matrix of class $C^{k,\alpha}(k\geq 0,0<\alpha<1)$ defined on $\Omega'$; $a_{k,l}(0)=b_{k,l}(0)=c(0)=0,b(0)=0$; let $u=(u^1,...,u^n)$ be unknown functions on  a disk $D$
in $\co$,
with radius $R$. Then for $\partial^i\bar\partial^j u(0)=0$ for $i+j\leq m-1$, and with given (vector) values of $\partial^i\bar\partial^j u(0)$ with $i+j=m, i\not=\mu
,j\not=\nu$, the differential system
$$\partial^\mu\bar\partial^\nu u=b(z, u, \mathcal{D}^1 u, ...,\mathcal{D}^{m-1} u)+\sum_{k+l=m}a_{k,l}(z, u, \mathcal{D}^1 u, ...,\mathcal{D}^{m-1} u)\partial^k\bar\partial^l u+$$
$$\sum_{k+l=m}b_{k,l}(z, u, \mathcal{D}^1 u, ...,\mathcal{D}^{m-1} u)\overline{\partial^k\bar\partial^l u}+c(z, u, \mathcal{D}^1 u, ...,\mathcal{D}^{m-1}u)\overline{\partial^\mu\bar\partial^\nu u}$$
has a solution of class $C^{m+k+\alpha}$ for sufficiently small values of $R$.
\end{thm}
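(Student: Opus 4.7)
The strategy parallels that of Theorems 1.1--1.2, but one now works in the H\"older scale $C^{k,\alpha}$ rather than $C^k$; the reduction in regularity is possible because the right-hand side is \emph{affine} in the top-order derivatives. I would cast the problem as a Banach fixed point for the top-order quantity $f := \partial^\mu\bar\partial^\nu u$ in $C^{k,\alpha}(D;\co^n)$.

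First, I build a right inverse $\mathcal{S}$ to $\partial^\mu\bar\partial^\nu$ on $D$ by iterating the Cauchy transform $Tg(z) = -\pi^{-1}\int_D g(\zeta)(\zeta-z)^{-1}\,dA(\zeta)$ and its complex conjugate analogue, $\nu$ copies of the first and $\mu$ copies of the second. Standard H\"older estimates give $\mathcal{S}\colon C^{k,\alpha}(D)\to C^{m+k,\alpha}(D)$; moreover $\mathcal{S}f$ vanishes to order $m$ at $0$, and the intermediate derivatives satisfy $\|\mathcal{D}^j\mathcal{S}f\|_{C^{k,\alpha}} \leq C R^{\gamma_j}\|f\|_{C^{k,\alpha}}$ for some $\gamma_j>0$ when $j\leq m-1$. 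The top-order derivatives $\partial^p\bar\partial^q \mathcal{S}f$ with $p+q=m$ are recovered from $f$ by Beurling-type singular integrals acting boundedly on $C^{k,\alpha}(D)$ with norms \emph{independent} of $R$.

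Next, let $P(z,\bar z)$ be the polynomial of degree $\leq m$ carrying the prescribed jet: $\partial^i\bar\partial^j P(0)=0$ for $i+j\leq m-1$ and, at order $m$, the prescribed values for $(i,j)\neq(\mu,\nu)$ together with $\partial^\mu\bar\partial^\nu P\equiv 0$. Set $u := P + \mathcal{S} f$. Substituting into the system turns it into the fixed-point equation $f = \Phi(f)$, where $\Phi(f)$ is the right-hand side of the system evaluated at this $u$ (noting that $\partial^\mu\bar\partial^\nu u = f$ and $\overline{\partial^\mu\bar\partial^\nu u} = \bar f$). I would then run Banach's contraction on the ball $B_\rho := \{f\in C^{k,\alpha}(D;\co^n) : \|f\|_{C^{k,\alpha}}\leq \rho\}$ for suitable small $\rho, R$. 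For $f\in B_\rho$ the tuple $(z,u,\mathcal{D}^1 u,\ldots,\mathcal{D}^{m-1}u)$ lies in a $C^{k,\alpha}$-neighborhood of $0$ whose size shrinks with $R$; composed with each of $a_{p,q},b_{p,q},c,b$ (which vanish at $0$), this yields multipliers of small $C^{k,\alpha}$ norm. Since pointwise multiplication is continuous in $C^{k,\alpha}$, each term of $\Phi(f)$---including the top-order ones $a_{p,q}\partial^p\bar\partial^q u$, $b_{p,q}\overline{\partial^p\bar\partial^q u}$, and $c\bar f$---has small $C^{k,\alpha}$ norm, giving the self-mapping property; parallel Lipschitz estimates give the contraction. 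The fixed point $f$ then yields the desired $u = P + \mathcal{S} f \in C^{m+k,\alpha}(D)$.

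\emph{Main obstacle.} The top-order contributions $\sum a_{p,q}\partial^p\bar\partial^q u$, $\sum b_{p,q}\overline{\partial^p\bar\partial^q u}$, and $c\bar f$ cannot be controlled by shrinking $R$ inside $\mathcal{S}$ alone, since Beurling-type transforms on $C^{k,\alpha}(D)$ have operator norm bounded below independently of $R$. The entire scheme therefore rests on transferring the smallness to the \emph{coefficients}: the vanishing conditions $a_{p,q}(0)=b_{p,q}(0)=c(0)=0$, combined with the fact that the composition argument stays in a shrinking $C^{k,\alpha}$-neighborhood of $0$ when $R$ is small, make these multipliers small in $C^{k,\alpha}$, and this is what enables the contraction. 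Without the hypothesis that the top-order coefficients vanish at the origin the argument would fail; conversely, the affine dependence of the right-hand side on the top-order derivatives is precisely what makes the weaker $C^{k,\alpha}$ regularity sufficient.
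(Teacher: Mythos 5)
Your overall scheme is the same one the paper has in mind: reduce to a fixed-point problem for the top-order quantity, build the right inverse $\mathcal{S}$ from iterated Cauchy transforms (the paper's $T^\nu\overline T^\mu$), carry the prescribed jet on a polynomial $P$, note that the Beurling-type pieces $\partial^p\bar\partial^q T^\nu\overline T^\mu$ have $R$-independent norm, and then obtain smallness from the hypothesis that the coefficients $a_{k,l},b_{k,l},c,b$ vanish at the origin. The paper does not spell this out here (it defers the details to the companion paper [P1]) but states explicitly that for $k\geq 1$ the argument is ``similar to that of Theorem 1.2,'' which is what you have reproduced.

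However, there is a genuine gap at $k=0$, and the paper itself flags it: ``The proof of the result when $k=0$ requires the application of Schauder's Fixed Point Theorem.'' Your proposal runs Banach's contraction uniformly for all $k\geq 0$, claiming ``parallel Lipschitz estimates give the contraction.'' That step fails when $k=0$. The nonlinearity in $\Phi$ lives entirely in the composition $a_{k,l}(z,u,\mathcal{D}^1u,\ldots,\mathcal{D}^{m-1}u)$ (and likewise for $b_{k,l},c,b$). If the coefficient matrices are merely $C^{0,\alpha}$, the map
\[
 f \;\longmapsto\; a_{k,l}\bigl(z,\,P+\mathcal{S}f,\ldots,\mathcal{D}^{m-1}(P+\mathcal{S}f)\bigr)
\]
from $C^{\alpha}(D)$ into $C^{\alpha}(D)$ is continuous and compact-range but not Lipschitz: the best one can say is
\[
 \bigl|a_{k,l}(\cdot,u_f,\ldots)-a_{k,l}(\cdot,u_g,\ldots)\bigr|_\infty \;\lesssim\; H_\alpha[a_{k,l}]\,\|f-g\|^\alpha,
\]
a H\"older rather than a Lipschitz modulus, and the $C^\alpha$-seminorm of the difference is no better. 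So the contraction constant cannot be made small by shrinking $R$ or the ball radius. For $k\geq 1$ the coefficient has a bounded gradient and the difference estimate becomes genuinely Lipschitz, so Banach applies; for $k=0$ one must instead verify that $\Phi$ is continuous and maps a convex compact set (e.g., a closed ball of $C^{\alpha'}(D)$ with $\alpha'<\alpha$, compactly embedded in $C^{\alpha''}$ for $\alpha''<\alpha'$, together with a priori $C^\alpha$ bounds) into itself, and invoke Schauder. Without this distinction your argument does not cover the endpoint case $k=0$ that the theorem claims.

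A second, more minor point: you should double-check the modification of the right inverse $\mathcal{S}$ so that the prescribed $m$-th order jet at the origin is actually realized. Because $\partial^p\bar\partial^q\mathcal{S}f$ for $p+q=m$, $(p,q)\neq(\mu,\nu)$ does not vanish at $0$ in general, one must subtract off the corresponding polynomial corrections (as in the paper's construction (19)) before the fixed point lands in the space with the desired jet; simply setting $u=P+\mathcal{S}f$ does not by itself give $\partial^p\bar\partial^q u(0)=\partial^p\bar\partial^q P(0)$ for those indices.
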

The proof of the result when $k=0$ requires the application of Schauder's Fixed Point Theorem. For the case $k\geq 1$, it is similar to that of Theorem 1.2.

When $a, b, c, d$ are independent of $z$, global solutions can be found as in Theorem 1.4, 1.5.
For a system of first order, we state the theorem as a corollary, because of its importance with $J$-holomorphic curves.
\begin{cor} Let $a(z,\eta), b(z,\eta), c(z,\eta)$ be $n\times n$ matrices and $d(z,\eta)$ be an $n\times 1$ matrix
defined on $D\times D'$ of class $C^{k,\alpha} (k\geq 0,0<\alpha<1)$; $a(0)=b(0)=c(0)=d(0)=0$;let $u=(u^1,...,u^n)$ be unknown functions on  a disk $D$
in $\co$, with radius $R$. Then for $u(0)=0$, and with given (vector) values of $\partial u(0)$, the differential system of first order
$$\bar\partial u=a(z,u)\overline {\partial u}+b(z,u)\partial u++c(z, u)\partial \bar u+d(z,u),$$
has a solution of class $C^{m+k+\alpha}$ for sufficiently small values of $R$.
\end{cor}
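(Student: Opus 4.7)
The plan is to obtain Corollary 1.11 by specializing Theorem 1.10 to $m=1$. First, fix $\mu=0$ and $\nu=1$ so that $\partial^\mu\bar\partial^\nu u = \bar\partial u$ matches the left-hand side of the corollary. Because $m-1=0$, the intermediate jets $\mathcal{D}^1 u,\ldots,\mathcal{D}^{m-1}u$ are absent and every coefficient function in Theorem 1.10 reduces to a function of $(z,u)$ alone, defined on $D\times D'^n$, which is exactly the setting of the corollary.

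Second, match the coefficients termwise. With $m=1$ the sum $\sum_{k+l=1}$ runs over $(k,l)\in\{(1,0),(0,1)\}$ and produces the four expressions $\partial u$, $\bar\partial u$, $\overline{\partial u}$, and $\overline{\bar\partial u}=\partial\bar u$, while the final term $c\,\overline{\partial^\mu\bar\partial^\nu u}$ yields another multiple of $\partial\bar u$. Invoke Theorem 1.10 with $a_{0,1}\equiv 0$ and $b_{0,1}\equiv 0$, and with the identifications $a_{1,0}\mapsto b(z,u)$, $b_{1,0}\mapsto a(z,u)$, $c\mapsto c(z,u)$, and the source term $b(z,\eta)\mapsto d(z,u)$. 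All four identifications trivially satisfy the vanishing hypothesis of Theorem 1.10 because $a(0)=b(0)=c(0)=d(0)=0$.

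Third, check the prescribed data. For $m=1$ and $(\mu,\nu)=(0,1)$, the admissible index set $\{(i,j):i+j=m,\ i\neq\mu,\ j\neq\nu\}$ is the singleton $\{(1,0)\}$, so prescribing $\partial u(0)$ corresponds exactly to the free data allowed by Theorem 1.10, while $\partial^i\bar\partial^j u(0)=0$ for $i+j\leq m-1=0$ is the condition $u(0)=0$ in the corollary. Theorem 1.10 then yields a solution of class $C^{m+k+\alpha}$ on a disk of sufficiently small radius. Since this is a clean specialization, no obstacle arises: all the real work (the Schauder fixed-point argument for $k=0$ and the iteration argument parallel to Theorem 1.2 for $k\geq 1$) is already packaged inside Theorem 1.10; the only thing worth remarking is that the corollary is not the fully general $m=1$ consequence of Theorem 1.10 (which would include extra coefficients of $\bar\partial u$ and $\partial\bar u$ beyond the one $c(z,u)\partial\bar u$ term retained here), but these additional terms are harmlessly suppressed by setting the corresponding matrices to zero.
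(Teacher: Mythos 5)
Your proof is correct and is exactly the paper's intended argument: the corollary is obtained by specializing the quasi\-linear Theorem~1.8 (not Theorem~1.10 as you wrote, a harmless misnumbering) to $m=1$, $\mu=0$, $\nu=1$, with $a_{0,1}=b_{0,1}\equiv 0$ and the identifications $a_{1,0}\mapsto b$, $b_{1,0}\mapsto a$, $c\mapsto c$, source term $\mapsto d$, after which the jet conditions reduce to $u(0)=0$ with $\partial u(0)$ prescribed. The paper states the corollary without separate argument, deferring the proof of Theorem~1.8 itself to [P1], so your specialization is precisely the content of the deduction.
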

In above, if $b=c=d=0$, $a(z,u)=a(u)$, independent of $z$ and $a(0)=0$, then the equation reduces to one which defines J-holomorphic curves on an almost complex manifold. This is the theorem of
Nijenhuis and Woolf. if $b=c=0$ only, $a(z,u)=a(u)$ and $a(0)=0.$
In terms of partial derivatives $\partial_x, \partial_y$, Theorem 1.9 can be converted to a system of order $m$ with some vanishing conditions.
\begin{thm}
Let $a_{kl}^j(z,\eta_0,...,\eta_{m-1}), f^i(z,\eta_0,...,\eta_{m-1}), k+l=m, j=1,...,n$ be $n2^m +n$ functions defined on $\Omega'$ of class $C^{k,\alpha}(k\geq 0, 0<\alpha<1)$.
Let $u=(u^1,...,u^n)$ be unknown functions and $X=(z, u, \nabla u, ..., \nabla^{m-1} u)$; let
$$C_p^j(z,\eta_0,...,\eta_{m-1})=\sum_{k+l=m}a_{kl}^j(z,\eta_0,...,\eta_{m-1}) i^l\sum_{q=\max\{0,p-k\}}^{\min\{l,p\}}{k\choose p-q}{l\choose q}(-1)^{l-q}$$
for $j=1,...,n; p=0,1,...,m$. If there is $p_0\in\{0,1,...,m\}$ such that
$$C_{p_0}^j(0)\not=0; j=1,...,n, $$
$$C_p^j(0)=0; p\not=p_0, j=1,...,n,$$
then the differential system of order $m$
$$\sum_{k+l=m}a_{kl}^j(X)\partial_x^k\partial^l_y u^j(x,y)=f^j(X); j=1,...,n$$
has a solution of class $C^{m+k+\alpha}$ that is of vanishing order $m$ for sufficiently small values of $R$.
\end{thm}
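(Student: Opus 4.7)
The strategy is to transform the real-derivative equation into the complex-derivative form of Theorem 1.8. Using the identities $\partial_x = \partial + \bar\partial$ and $\partial_y = i(\partial - \bar\partial)$ and expanding by the binomial theorem,
$$\partial_x^k \partial_y^l = i^l \sum_{r,s} \binom{k}{r}\binom{l}{s}(-1)^{l-s}\, \partial^{r+s} \bar\partial^{(k+l)-(r+s)}.$$
Setting $p = r+s$ and $q = s$, the coefficient of $\partial^p \bar\partial^{m-p} u^j$ in $\sum_{k+l=m} a_{kl}^j(X)\,\partial_x^k \partial_y^l u^j$ is precisely the quantity $C_p^j(X)$ defined in the statement, so the system is equivalent to
$$\sum_{p=0}^m C_p^j(X)\, \partial^p \bar\partial^{m-p} u^j = f^j(X), \qquad j=1,\ldots,n.$$
By hypothesis $C_{p_0}^j(0)\neq 0$ and $C_p^j(0) = 0$ for $p\neq p_0$, so by continuity $C_{p_0}^j$ is invertible on a neighborhood of the origin in $\Omega'$.

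Setting $\mu = p_0$ and $\nu = m-p_0$, one would like to divide through by $C_{p_0}^j$ and invoke Theorem 1.8 (with $b_{k,l}= c\equiv 0$, since no conjugate derivatives of $u$ appear). The only obstruction is that Theorem 1.8 requires the inhomogeneous term to vanish at the origin, while $f^j(X)/C_{p_0}^j(X)$ need not. I would absorb the constant $f^j(0)$ into a polynomial correction by setting $u^j = R^j + v^j$, where
$$R^j(z,\bar z) = \frac{f^j(0)}{\mu!\,\nu!\,C_{p_0}^j(0)}\, z^\mu \bar z^\nu$$
is a homogeneous polynomial of degree $m$ satisfying $\partial^p\bar\partial^{m-p}R^j = \delta_{p,p_0}\, f^j(0)/C_{p_0}^j(0)$ and vanishing to order $m$ at the origin. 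The equation for $v^j$ then reads
$$\sum_{p=0}^m C_p^j(\tilde X)\, \partial^p \bar\partial^{m-p} v^j = f^j(\tilde X) - C_{p_0}^j(\tilde X)\,\frac{f^j(0)}{C_{p_0}^j(0)},$$
where $\tilde X = (z, R+v, \nabla(R+v), \ldots, \nabla^{m-1}(R+v))$. When $v$ vanishes to order $m$ at the origin, so does $R+v$, hence $\tilde X|_{z=0}=0$; consequently the new inhomogeneous term vanishes at $0$ and the coefficients $C_p^j(\tilde X)$ with $p\neq p_0$ do likewise. After dividing by $C_{p_0}^j(\tilde X)$, the system has exactly the form required by Theorem 1.8.

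Theorem 1.8 then yields a $C^{m+k+\alpha}$ solution $v^j$ of vanishing order $m$ at the origin for $R$ sufficiently small (the free $m$-th order initial data $\partial^i\bar\partial^j v^j(0)$ with $i+j=m$, $i\neq \mu$, $j\neq \nu$ can all be chosen to be zero), and then $u^j = R^j + v^j$ is the desired $C^{m+k+\alpha}$ solution of the original system, vanishing to order $m$ at $0$ since both $R^j$ and $v^j$ do. The main technical point to verify is that the reduced quasi-linear coefficients $-C_p^j(\tilde X)/C_{p_0}^j(\tilde X)$ and the inhomogeneous term inherit the $C^{k,\alpha}$ regularity on a suitable subdomain of $\Omega'$ around the origin, but this is immediate from the $C^{k,\alpha}$ hypothesis on $a_{kl}^j$ and $f^j$ together with $C_{p_0}^j$ being bounded away from zero near $0$. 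I expect no serious obstacle beyond the bookkeeping of the binomial expansion in the first step, and that computation is essentially already encoded in the definition of $C_p^j$ supplied by the statement.
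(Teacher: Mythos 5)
Your proposal is correct, and it follows precisely the route the paper sketches: the binomial conversion $\partial_x^k\partial_y^l=i^l\sum_p\bigl(\sum_q\binom{k}{p-q}\binom{l}{q}(-1)^{l-q}\bigr)\partial^p\bar\partial^{m-p}$ (the Appendix lemma), producing exactly the stated $C_p^j$, followed by an appeal to Theorem 1.8 with $\mu=p_0$, $\nu=m-p_0$, diagonal $a_{k,l}$, and $b_{k,l}=c\equiv 0$. The paper itself only says ``this is a consequence of Theorem 1.8 and a lemma in Appendix'' and defers the full argument to [P1], so your write-up actually supplies a detail the paper does not spell out: since Theorem 1.8 demands $b(0)=0$ after dividing by $C_{p_0}^j$, one must first normalize by the homogeneous correction $R^j=\frac{f^j(0)}{\mu!\,\nu!\,C_{p_0}^j(0)}z^\mu\bar z^\nu$ so that the residual inhomogeneity $f^j(\tilde X)/C_{p_0}^j(\tilde X)-f^j(0)/C_{p_0}^j(0)$ vanishes at the origin of $\Omega'$, and your observation that $\partial^p\bar\partial^{m-p}R^j\equiv\delta_{p,p_0}f^j(0)/C_{p_0}^j(0)$ (with no error terms, since $R^j$ is a monomial) makes this reduction exact. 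The regularity bookkeeping ($C^{k,\alpha}$ preserved under division by $C_{p_0}^j$ near a point where it is nonzero, and under translation by the polynomial jet of $R$) is as routine as you say.
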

This is a consequence of Theorem 1.8 and a lemma in Appendix. All proofs of the above results will be given in [P1].

\subsection{Holomorphic systems}
Let
$$\Omega_4=D\times D'\times\co^n\times\cdot\cdot\cdot\times\co^n$$
We denote $f^{(k)}(z)=\partial^k f(z)$. Our method allows to prove the following local existence of holomorphic solutions.
\begin{thm}
Let $H(z, \eta_0,...,\eta_{n-1}):\Omega_4\to\co^n$ be a mapping of class $C^k(k\geq 2)$ that is holomorphic in $\mathrm{Int}(\Omega_4)$;  Then the following holomorphic differential system
$$f^{(m)}(z)=H(z,f(z), f'(z),....,f^{(m-1)}(z))$$
with initial values
\begin{eqnarray*}
f(0)&=&a_0 \in \mathrm{Int}(D)\\
f^{(i)}(0)&=&a_i,\mbox{ } i=1,...,m-1
\end{eqnarray*}
has a unique holomorphic solution for sufficiently small values of $R$.
\end{thm}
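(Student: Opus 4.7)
The plan is to convert the $m$-th order holomorphic ODE into an integral fixed-point equation and then apply Banach's contraction principle on a suitable space of holomorphic functions. Let $p(z)=\sum_{j=0}^{m-1}\frac{a_j}{j!}z^j$ be the unique polynomial of degree $\le m-1$ matching the prescribed jet $p^{(j)}(0)=a_j$. A holomorphic $f:D\to\co^n$ solves the system with the given initial data if and only if it satisfies
$$f(z)=p(z)+\frac{1}{(m-1)!}\int_0^z(z-\zeta)^{m-1}H\bigl(\zeta,f(\zeta),f'(\zeta),\ldots,f^{(m-1)}(\zeta)\bigr)\,d\zeta=:(Tf)(z),$$
integrated along the segment $[0,z]$; because we will restrict attention to holomorphic $f$, the integrand is holomorphic, so the choice of path is immaterial and $Tf$ is holomorphic.

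Next I would set up the Banach space $X_R$ of holomorphic maps $\mathrm{Int}(D)\to\co^n$ extending continuously to $\bar D$, equipped with the norm $\|f\|_R:=\sum_{j=0}^{m-1}\sup_{|z|\le R}|f^{(j)}(z)|$; completeness follows from uniform convergence preserving holomorphy, together with Cauchy's estimates for the intermediate derivatives. Choose $\epsilon>0$ small enough that the closed ball $B_\epsilon=\{f\in X_R:\|f-p\|_R\le\epsilon\}$ has the property that $\bigl(\zeta,f(\zeta),\ldots,f^{(m-1)}(\zeta)\bigr)$ lies in a fixed compact set $K\subset\mathrm{Int}(\Omega_4)$ for every $f\in B_\epsilon$ and every $|\zeta|\le R$; this is possible because $(0,a_0,\ldots,a_{m-1})\in\mathrm{Int}(\Omega_4)$. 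On $K$, $H$ is bounded by some $M$ and has Lipschitz constant $L=\sup_K|\nabla_\eta H|$ in the $\eta$-variables (using $k\ge 2$). Using $(Tf-p)^{(j)}(z)=\frac{1}{(m-j-1)!}\int_0^z(z-\zeta)^{m-j-1}H(\cdots)\,d\zeta$ one gets $\|Tf-p\|_R\le C(m)\,M\,R$ and $\|Tf_1-Tf_2\|_R\le C(m)\,L\,R\,\|f_1-f_2\|_R$, so shrinking $R$ makes $T$ simultaneously a self-map of $B_\epsilon$ and a strict contraction.

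Banach's fixed point theorem then yields a unique $f\in B_\epsilon$ with $Tf=f$; differentiating $m$ times recovers the ODE and the initial conditions. Since every iterate $T^np$ is holomorphic in $\mathrm{Int}(D)$ and the iterates converge uniformly on $\bar D$, the fixed point is holomorphic. Uniqueness of a holomorphic solution for small $R$ follows because any such solution automatically lies in $B_\epsilon$ after possibly shrinking $R$, and the fixed point in $B_\epsilon$ is unique. The main technical point, though not a genuine obstacle, is the choice of the norm $\|\cdot\|_R$ that controls the first $m-1$ derivatives together with closedness of $B_\epsilon$ in $X_R$; once this is in place the whole argument is standard Picard iteration, with the only role of the holomorphy hypothesis being to keep $Tf$ in the class $X_R$ at each step.
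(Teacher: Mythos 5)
Your proof is correct, but it takes a genuinely different and more elementary route than the paper. The paper proves Theorem~1.11 by staying within the machinery built in Sections~2--4: it introduces the subspace $HC_0^{m+\alpha}(D)$ of holomorphic functions in $C_0^{m+\alpha}(D)$ with the norm $\|\cdot\|^{(m)}$, uses the key observation (Lemma~7.1) that the area-integral operator $\overline T$ preserves holomorphy, forms $\omega^i(f)=\overline T^m H(z,f,\partial f,\ldots,\partial^{m-1}f)$ and the correction $\Theta^i(f)=\omega^i(f)-\sum_{k=0}^{m-1}\frac{1}{k!}[\partial^k\omega^i(f)](0)$, and then invokes the general contraction estimate of Theorem~4.3 and Lemma~4.1. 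You instead use the classical Volterra integral operator with the path integral $\int_0^z(z-\zeta)^{m-1}H(\cdots)\,d\zeta$, the $C^{m-1}$ sup-norm space $X_R$, and a direct Picard iteration --- i.e.\ essentially the classical proof of the Cauchy existence theorem for holomorphic ODEs, which the paper explicitly says its result ``slightly generalizes.'' Both are valid; the paper's version buys consistency with the rest of the paper (and reuses the H\"older estimates on $\overline T$ already proved there), while yours is self-contained, avoids the Green operator entirely, and is closer to what a reader familiar with ODE theory would expect. Two minor points worth tightening in your write-up: (i) the paper's statement writes $a_0\in\mathrm{Int}(D)$ but, given $\Omega_4=D\times D'\times\co^n\times\cdots$, the needed condition is $a_0\in\mathrm{Int}(D')$ so that $(0,a_0,\ldots,a_{m-1})\in\mathrm{Int}(\Omega_4)$ --- you implicitly assume the corrected version, which is the right reading; and (ii) in the uniqueness step you should say explicitly that any other holomorphic solution $g$ agrees with $f$ on a possibly smaller disk by the contraction, and then on the full disk by analytic continuation (the paper reaches the same conclusion by invoking unique continuation for holomorphic functions directly).
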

This theorem is slightly more general than the classical fundamental theorem of holomorphic ordinary differential system(see [IY]).
However, the following provides global solutions if $H$ is independent of $z$ and seems new.
\begin{thm}
Let $H(\eta_0,...,\eta_{n-1}):\Omega_4\to\co^n$ be a mapping of class $C^k(k\geq 2)$ that is holomorphic in $\mathrm{Int}(\Omega_4)$ and is independent of
$z$. Assume that
$$H(0)=0,\mbox{ } \partial H(0)=0.$$
Then the following holomorphic differential system
$$f^{(m)}(z)=H(f(z), f'(z),....,f^{(m-1)}(z))$$
with initial values
\begin{eqnarray*}
f(0)&=&a_0 \in \mathrm{Int}(D)\\
 f^{(i)}(0)&=&a_i,\mbox{ } i=1,...,m-1
\end{eqnarray*}
has a unique solution in the whole $D$ of class $C^{m+k}$, holomorphic in $\mathrm{Int}(D)$.
\end{thm}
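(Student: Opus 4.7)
The plan is to convert the differential system into a Volterra-type integral equation on the disk $D$ and apply the Banach contraction principle on a space of holomorphic functions, using the vanishing hypotheses $H(0)=0$ and $\partial H(0)=0$ to absorb the global radius $R$.

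First I would observe that a holomorphic $f$ matching the prescribed jet at $0$ satisfies the equation if and only if it is a fixed point of
$$(Tf)(z) = P(z) + \frac{1}{(m-1)!}\int_0^z (z-w)^{m-1}\, H\bigl(f(w), f'(w),\ldots, f^{(m-1)}(w)\bigr)\, dw,$$
where $P(z)=\sum_{i=0}^{m-1} a_i z^i/i!$ is the initial Taylor polynomial and the integral runs along the segment from $0$ to $z$. I would work in the Banach space $\mathcal{B}$ of $\co^n$-valued functions holomorphic in $\mathrm{Int}(D)$ whose derivatives up to order $m-1$ extend continuously to $D$, normed by $\|g\|_* = \max_{0\le i\le m-1}\sup_{z\in D}|g^{(i)}(z)|$. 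Differentiating the Volterra kernel gives $(d/dz)^j\bigl[(m-1)!^{-1}\int_0^z (z-w)^{m-1}g(w)\,dw\bigr] = ((m-1-j)!)^{-1}\int_0^z (z-w)^{m-1-j}g(w)\,dw$ for $0\le j\le m-1$, so $T$ does map $\mathcal{B}$ into itself.

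Second, Taylor's theorem together with the vanishing conditions yields $|H(\eta)|\le C_H|\eta|^2$ and $|\partial H(\eta)|\le C_H|\eta|$ on any bounded region of the domain of $H$, with $C_H$ controlled by the supremum of the second derivatives of $H$. On the ball $B_\delta=\{g\in\mathcal{B}:\|g\|_*\le\delta\}$ these give
$$\sup_{z\in D}\bigl|(Tg-P)^{(j)}(z)\bigr| \le \frac{R^{m-j}}{(m-j)!}\,C_H\,\delta^2, \qquad 0\le j\le m-1,$$
so $\|Tg-P\|_* \le C'(m,C_H)\, R^m \delta^2$. For initial data small enough to force $\|P\|_*\le\delta/2$, and then $\delta$ chosen so that $C'R^m\delta\le 1/2$, $T$ sends $B_\delta$ into itself; the companion estimate from $|\partial H(\eta)|\le C_H|\eta|$ likewise makes $T$ a strict contraction on $B_\delta$. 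Banach's fixed point theorem then produces the unique $f\in B_\delta$, and the identity $f^{(m)}=H(f,\ldots,f^{(m-1)})$ combined with $H\in C^k$ bootstraps $f$ to class $C^{m+k}(D)$. Uniqueness among holomorphic solutions with the prescribed jet follows because their Taylor coefficients at $0$ are recursively determined by the differential equation and the initial data.

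The main obstacle is the standard tug-of-war in a global fixed-point argument: making $T$ a contraction wants $\delta$ small, whereas accommodating an arbitrary $R$ wants $\delta$ large relative to $R^m$. The second-order vanishing of $H$ at $0$ is precisely what resolves this tension, because the nonlinear estimate comes with a factor $\delta^2$ rather than $\delta$; thus $C'R^m\delta\le 1/2$ can be arranged for any fixed $R$ by choosing the initial data, and hence $\delta$, sufficiently small (a smallness hypothesis that is implicit in the statement, paralleling the explicit one in Theorem 1.4). Without $\partial H(0)=0$ only the linear bound $|H(\eta)|\le C|\eta|$ is available, the iteration closes only on a disk of radius $\sim 1/C$, and one recovers merely the local Theorem 1.12.
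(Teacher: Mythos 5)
Your proposal is correct, and it arrives at the result by a genuinely more elementary route than the paper. The paper's Section 7 proof stays inside the general machinery developed in Sections 2--4: it works in the Banach space $HC_0^{m+\alpha}(D)$ with the H\"older-type norm $\|\cdot\|^{(m)}$, uses the operator $\overline{T}^m$, and then invokes the estimates of Theorems 1.4/1.5 (with $\gamma$ taken small for the fixed $R$). You instead observe that for a holomorphic right-hand side none of that is needed: the iterated Cauchy transform $\overline{T}^m h$ differs from the classical Volterra kernel $\frac{1}{(m-1)!}\int_0^z (z-w)^{m-1}h(w)\,dw$ only by a polynomial of degree $\le m-1$ (both invert $\partial^m$ on holomorphic functions), and because holomorphicity automatically gains a full derivative at each integration, the supremum norm $\max_{0\le i\le m-1}\sup_D|g^{(i)}|$ suffices; no fractional H\"older exponent $\alpha$ is required. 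The contraction argument is then the standard one for ODEs, with the crucial twist --- identical in spirit to the paper's --- that the quadratic vanishing $|H(\eta)|\lesssim|\eta|^2$, $|\partial H(\eta)|\lesssim|\eta|$ produces a factor $\delta^2$ (resp.\ $\delta$) in the self-map and Lipschitz bounds, so the inequality $C'R^m\delta\le 1/2$ can be met for \emph{any} $R$ by shrinking $\delta$, not $R$. This is exactly how the paper resolves the global/local tension, so the analytic mechanism is the same; what you gain is that the whole of the $T,\overline T,{}^{k}T,S_b$ apparatus and the $C_0^{k+\alpha}$ Banach-space lemmas are bypassed for this particular theorem. Your uniqueness argument (Taylor coefficients are recursively determined, then unique continuation of holomorphic functions) is the same observation the paper makes, just spelled out.

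One substantive point you handle well and should keep: the theorem as stated prescribes arbitrary $a_0,\ldots,a_{m-1}$ with only $a_0\in\mathrm{Int}(D')$, yet the contraction needs $\|P\|_*\le\delta/2$ with $\delta$ already pinned down by $R$ and $C_H$, so a smallness hypothesis on the $a_i$ is in fact required. This is an implicit hypothesis in the paper as well --- Theorem 1.5, on which the paper's Section 7 proof leans, explicitly requires the polynomial coefficients to be $<\epsilon$ --- and your flagging it is accurate, not a defect of your proof.
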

\subsection{Dependence on parameters}
If $a$ as in previous theorems depends on some additional parameters in a $C^{k'}$ fashion $(0\leq k'\leq k)$, we will show the existence of parameterized  solutions with the smoothness properties in parameters. Proofs are given [P].
\subsection{Examples of no solutions-Mizohata equation}
After a famous example of Lewy [L] in $\re^3$, Mizohata [M] considered, in $\re^2$, the following equation
$$\frac{\partial u}{\partial x}+i x\frac{\partial u}{\partial y}=F(x,y)\label{eq0}.$$
It was proved in [M] that there is a smooth function $F$ for which the above equation has no solution near the origin. Converting the equation to complex one, one has the following equation
\begin{eqnarray}
\bar\partial u =\frac{1}{1+\rp z}F(z,\bar z)-\frac{1-\rp z}{1+\rp z}\partial u.
\end{eqnarray}
According to the notation as Theorem 1.1, we have
$$a(z, \eta_0, \eta)=\frac{1}{1+\rp z}F(z,\bar z)-\frac{1-\rp z}{1+\rp z}\eta,$$
whence
$$\partial_\eta(0)=-1, \bar\partial_\eta a(0)=0.$$
Therefore the condition (1) is not met for Theorem 1.1. Taking $\partial^\mu\bar\partial^{\nu-1}$ on both side, we have
$$\partial^\mu\bar\partial^{\nu}u=-\frac{1-\rp z}{1+\rp z}\partial^{\mu+1}\bar\partial^{\nu-1}u+\cdot\cdot\cdot+\partial^\mu\bar\partial^{\nu-1}\left\{\frac{1}{1+\rp z}F(z,\bar z)\right\}.$$
This is a differential equation of any order which has no solutions locally at the origin.

The organization of this paper is as follows. In Section 2, and 3, we prove various results for setting up later proofs of using fixed point theorem on a Banach space, and the other sections provide proofs of theorems. We rely on some of classical results for Green operator from [NW], but otherwise the paper is
self-contained. Undoubtedly, the work in this paper is largely influenced by the fundamental works of
Newlander-Nirenberg [NN] on almost complex structures and Nijenhuis-Woolf [NW] on local existence of $J$-holomorphic curves.

\section{Function spaces and their norms}

Let $D$ denote the closed disk $\{z\in\co\mid|z|\leq R\}$ and $C$ its boundary $\{z\in\co \mid |z|=\R\}$. Unless otherwise stated, all functions considered will be complex valued and integrable, with domain $D$. We will consider several classes of functions:

\subsection{H\" older space}

$C^{\alpha}(D)$ is the set of all functions $f$ on $D$ for which

$$ H_{\alpha}[f]=\sup\left\{{\frac{|f(z)-f(z')|}{|z-z'|^{\alpha}} \bigg| z,z' \in \D} \right\}$$
is finite.

$C^{k}(D)$ is the set of all function $f$ on $D$ whose $k^{\textup{th}}$ order partial derivatives exist and are continuous, $k$ an integer, $k \geq 0$.
$C^{k+\alpha}(D)$ is the set of all functions $f$ on $D$ whose $k^{\textup{th}}$ order partial derivatives exist and belong to $C^{\alpha}(D)$.

The symbol $|f|$ or $|f|_\D$ denotes $\textup{sup}_{z\in D}|f(z)|$.
For $f\in C^\alpha(D)$ we define
$$\|f\|=|f|+(2R)^\alpha H_\alpha[f].$$
The set of $n$-tuples $f=(f_1,..., f_n)$ of functions (vector functions or maps) of $C^\alpha (D)$ is denoted $[C^\alpha (D)]^n$, and $H_\alpha[f]$ is defined as the maximum of $H_\alpha[f_i](i=1,..,n)$. In a similar fashion we define $|f|_A=\sup_{z\in A}|f(z)|$ for functions and vector functions, and write $|f|$ when the domain is understood. Finally, in this paper throughout, the norm of $\co^N$ is taken as $|v|=\max|v_j|$.

The following lemma is well-known; for a proof see [NW].
\begin{lem}
The function $\|\cdot\cdot\cdot\|$ defined on $C^{\alpha}(D)$ is a norm, with respect to which $C^{\alpha}(D)$ is a Banach algebra: $\|fg\|\leq \|f\|\|g\|$.
\end{lem}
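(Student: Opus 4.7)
The statement asks for three things: (i) that $\|\cdot\|$ is a norm on $C^\alpha(D)$, (ii) that the resulting normed space is complete (Banach), and (iii) the submultiplicative inequality $\|fg\|\leq\|f\|\|g\|$. The plan is to verify each in turn; the only nontrivial arithmetic is in (iii), but even there it reduces to one standard Hölder estimate on a product.

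For (i), I would check the three norm axioms directly. Positive definiteness is immediate since $\|f\|\geq|f|_D$ and $|f|_D=0$ forces $f\equiv 0$. Absolute homogeneity is inherited from $|\cdot|_D$ and $H_\alpha[\cdot]$, both of which scale linearly under multiplication by a scalar. The triangle inequality follows from the pointwise triangle inequality (giving $|f+g|_D\leq|f|_D+|g|_D$) and from the obvious subadditivity of the Hölder seminorm $H_\alpha[f+g]\leq H_\alpha[f]+H_\alpha[g]$; summing these gives the full triangle inequality for $\|\cdot\|$.

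For (ii), I would take a $\|\cdot\|$-Cauchy sequence $\{f_n\}\subset C^\alpha(D)$. Since $|f_n-f_m|_D\leq\|f_n-f_m\|$, it is uniformly Cauchy and converges uniformly to some continuous $f$ on the compact set $D$. Since $H_\alpha[f_n-f_m]\to 0$, for fixed $z\neq z'$ the difference quotients $|(f_n-f_m)(z)-(f_n-f_m)(z')|/|z-z'|^\alpha$ are Cauchy in $m,n$; letting $m\to\infty$ inside the quotient (using uniform convergence) shows $H_\alpha[f_n-f]\to 0$, so $f\in C^\alpha(D)$ and $\|f_n-f\|\to 0$.

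For (iii), the key identity is the standard product Hölder estimate: for $z,z'\in D$,
\[
f(z)g(z)-f(z')g(z')=f(z)\bigl(g(z)-g(z')\bigr)+g(z')\bigl(f(z)-f(z')\bigr),
\]
which gives $H_\alpha[fg]\leq |f|_D H_\alpha[g]+|g|_D H_\alpha[f]$. Combined with $|fg|_D\leq|f|_D|g|_D$, we get
\[
\|fg\|\leq |f|_D|g|_D+(2R)^\alpha\bigl(|f|_D H_\alpha[g]+|g|_D H_\alpha[f]\bigr).
\]
Expanding $\|f\|\|g\|=\bigl(|f|_D+(2R)^\alpha H_\alpha[f]\bigr)\bigl(|g|_D+(2R)^\alpha H_\alpha[g]\bigr)$ yields the same three terms plus the nonnegative slack $(2R)^{2\alpha}H_\alpha[f]H_\alpha[g]\geq 0$, which is exactly what makes the inequality go through. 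There is no real obstacle here; the only thing worth remarking is that the weight $(2R)^\alpha$ in the definition of $\|\cdot\|$ plays no essential role in the submultiplicativity itself (any positive weight works, the slack term only grows) — it matters later for making certain radius-dependent estimates contractive when $R$ is small.
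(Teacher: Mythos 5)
Your proof is correct. Note that the paper does not actually prove this lemma — it simply cites Nijenhuis--Woolf [NW] — so there is no "paper's proof" to compare against; your argument is the classical one. All three pieces are right: the norm axioms are inherited from $|\cdot|_D$ and $H_\alpha[\cdot]$; completeness follows from uniform convergence plus the $\liminf$-type argument on the difference quotient you sketch; and the submultiplicativity reduces to $H_\alpha[fg]\leq|f|_DH_\alpha[g]+|g|_DH_\alpha[f]$, after which expanding $\|f\|\|g\|$ produces the extra nonnegative term $(2R)^{2\alpha}H_\alpha[f]H_\alpha[g]$. Your closing remark is also accurate: the specific weight $(2R)^\alpha$ is irrelevant to the Banach-algebra inequality (any positive weight $c$ gives slack $c^2H_\alpha[f]H_\alpha[g]\geq 0$); the paper chooses $(2R)^\alpha$ so that later identities such as $\|z\|=3R$ and the radius-dependent smallness estimates in Sections 2--4 come out cleanly.
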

The following simple lemma is to be used multiple times throughout the paper.
\begin{lem} If $f\in C^{k+\alpha}(D)$, then
$$|f(z')-\sum_{l=0}^k\frac{1}{l!}\sum_{i+j=l}\partial^i\bar\partial^j f(z)(z'-z)^i{(\bar z'-\bar z)}^j|\leq \frac{1}{k!}\bigg\{\sum_{i+j=k}H_\alpha[\partial^i\bar\partial^j f]\bigg\}|z'-z|^{k+\alpha}.$$
\end{lem}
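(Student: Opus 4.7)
The plan is to obtain this as a one-variable Taylor expansion applied along the segment from $z$ to $z'$, combined with the Hölder estimate for the top-order derivatives of $f$.

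First I would introduce $w=z'-z$ and consider the auxiliary function $g(t)=f(z+tw)$ for $t\in[0,1]$. A repeated application of the chain rule in the complex form $\partial_t=w\partial+\bar w\bar\partial$ gives
$$g^{(l)}(t)=\sum_{i+j=l}\binom{l}{i}(\partial^i\bar\partial^j f)(z+tw)\,w^i\bar w^j,$$
so that $g^{(l)}(0)$ reproduces the $l$-th term of the expansion in the statement (up to the multinomial factor which is absorbed into the $1/l!$).

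Next I would apply Taylor's formula for the $C^k$ function $g$ with integral remainder at order $k-1$ and then rewrite the remainder relative to $g^{(k)}(0)$:
$$g(1)-\sum_{l=0}^{k}\frac{g^{(l)}(0)}{l!}=\int_0^1\frac{(1-t)^{k-1}}{(k-1)!}\bigl(g^{(k)}(t)-g^{(k)}(0)\bigr)\,dt.$$
The integrand is
$$\sum_{i+j=k}\binom{k}{i}\bigl[(\partial^i\bar\partial^j f)(z+tw)-(\partial^i\bar\partial^j f)(z)\bigr]w^i\bar w^j,$$
and the Hölder seminorm $H_\alpha[\partial^i\bar\partial^j f]$ bounds each bracketed difference by $|tw|^\alpha H_\alpha[\partial^i\bar\partial^j f]$.

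Finally I would take absolute values, pull out $|w|^{k+\alpha}$, and evaluate $\int_0^1 (1-t)^{k-1}t^\alpha\,dt/(k-1)!$, estimating it crudely by $1/k!$ since $t^\alpha\leq 1$. This yields the claimed inequality
$$\Bigl|f(z')-\sum_{l=0}^k\frac{1}{l!}\sum_{i+j=l}\partial^i\bar\partial^j f(z)\,w^i\bar w^j\Bigr|\leq\frac{1}{k!}\Bigl\{\sum_{i+j=k}H_\alpha[\partial^i\bar\partial^j f]\Bigr\}|w|^{k+\alpha},$$
up to the combinatorial identification between $\frac{1}{l!}\binom{l}{i}$ and the convention $\frac{1}{i!\,j!}$ implicit in writing the expansion with the complex derivatives $\partial^i\bar\partial^j$. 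There is no essential obstacle: the only point requiring care is the bookkeeping of the binomial/multinomial constants, which should either be absorbed into the stated constant or correspond to the author's convention for the complex Taylor polynomial.
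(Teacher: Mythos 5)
Your proof is essentially the paper's proof: both apply Taylor's theorem with integral remainder along the segment from $z$ to $z'$ and then estimate the top-order increment by $H_\alpha$. The only cosmetic difference is that you use the single-integral form $\int_0^1\frac{(1-t)^{k-1}}{(k-1)!}(g^{(k)}(t)-g^{(k)}(0))\,dt$, whereas the paper writes the equivalent $k$-fold iterated integral $\int_0^1\int_0^{t_{k-1}}\cdots\int_0^{t_1}$; these are the same object, and your version of the bound $\int_0^1\frac{(1-t)^{k-1}}{(k-1)!}t^\alpha\,dt\le\frac{1}{k!}$ matches the paper's.

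You are right to flag the binomial bookkeeping, and in fact you are more careful than the paper here. The expansion $\frac{d^k}{dt^k}f(tz'+(1-t)z)=\sum_{i+j=k}\binom{k}{i}\partial^i\bar\partial^j f(\cdot)\,w^i\bar w^j$ really does carry the factors $\binom{k}{i}$, and similarly for $g^{(l)}(0)$; the paper's proof and the lemma as stated drop them, so strictly speaking the displayed inequality should read $\frac{1}{k!}\sum_{i+j=k}\binom{k}{i}H_\alpha[\partial^i\bar\partial^j f]\,|z'-z|^{k+\alpha}$ with the Taylor polynomial likewise carrying the binomials. This is harmless for the rest of the paper, because every subsequent use of Lemma~2.2 (e.g.\ in Lemma~2.7) immediately replaces $\sum_{i+j=k}H_\alpha[\partial^i\bar\partial^j f]$ by the crude bound $2^k(2R)^{-\alpha}\|f\|^{(k)}$, and $\sum_{i+j=k}\binom{k}{i}=2^k$, so the corrected constant leads to exactly the same downstream estimate. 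So: same method as the paper, correctly executed, and your closing caveat about the combinatorial constants is not a gap in your argument but an accurate observation about the paper's own bookkeeping.
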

\begin{proof}
Expanding at $z$, we have the formula
\begin{eqnarray*}
&&f(z')-\sum_{l=0}^{k-1}\frac{1}{l!}\sum_{i+j=l}\partial^i\bar\partial^j f(z)(z'-z)^i{(\bar z'-\bar z)}^j\\
&=&\int_0^1\int_0^{t_{k-1}}\cdot\cdot\cdot\int_0^{t_1}\left\{\frac{d^k}{dt^k}f_N(t z'+(1-t)z)\right\}dtdt_1\cdot\cdot\cdot dt_{k-1}\nonumber\\
&=&\int_0^1\int_0^{t_{k-1}}\cdot\cdot\cdot\int_0^{t_1}\left\{\sum_{i+j=k}\partial^i\bar\partial^j f(tz'+(1-t)z)(z'-z)^i{(\bar z'-\bar z)}^j\right\}dtdt_1\cdot\cdot\cdot dt_{k-1}.\nonumber
\end{eqnarray*}
Hence, we have, by subtracting kth term,
$$f(z')-\sum_{l=0}^k\frac{1}{l!}\sum_{i+j=l}\partial^i\bar\partial^j f(z)(z'-z)^i{(\bar z'-\bar z)}^j$$
$$=\int_0^1\int_0^{t_{k-1}}\cdot\cdot\cdot\int_0^{t_1}\left\{\sum_{i+j=k}\{\partial^i\bar\partial^j f(tz'+(1-t)z)-\partial^i\bar\partial^j f(z)\}(z'-z)^i{(\bar z'-\bar z)}^j\right\}dtdt_1\cdot\cdot\cdot dt_{k-1}$$
Thus we have,
$$|f(z')-\sum_{l=0}^k\frac{1}{l!}\sum_{i+j=l}\partial^i\bar\partial^j f(z)(z'-z)^i{(\bar z'-\bar z)}^j|$$
$$\leq \frac{1}{k!}\sum_{i+j=k}H_\alpha[\partial^i\bar\partial^j f]|z'-z|^{k+\alpha}.$$
This completes the proof.
\end{proof}
\subsection{Function spaces with high order vanishing at the origin}
Our idea of solving differential equations of order $m$ is to look for solutions that vanish up to $m-1$ order at the origin; this way the norm estimate of function space to be considered later is made possible using only $m$th order derivatives. This is rather different from classical norms
used for higher order derivatives in partial differential equations. This idea could be applied to higher dimensional differential equations.
We denote for $k\geq 1$, $C_0^{k+\alpha}(D)$ the set of all functions in $C^{k+\alpha}(D)$ whose derivatives vanish up to order $k-1$ at the origin. Specifically
$$C_0^{k+\alpha}(D)=\{f\in  C^{k+\alpha}(D)\big | \partial^i\bar \partial^j f(0)=0, i+j\leq k-1\}.$$
One has the following obvious nesting
$$C_0^{m+\alpha}(D)\subset C_0^{m-1+\alpha}(D)\subset\cdot\cdot\cdot\subset C_0^{1+\alpha}(D)\subset C^{\alpha}(D).$$
We now define functions $\|\cdot\cdot\cdot\|^{(k)}$ on $C^{k+\alpha}(D)$ inductively. On $C^{1+\alpha}(D)$ we define, following [NW],
$$\|f\|^{(1)}=\max\{\|\partial f\|, \|\bar\partial f\|\}.$$
For $k\geq 2$, we define
$$\|f\|^{(k)}=\max\{\|\partial f\|^{(k-1)}, \|\bar\partial f\|^{(k-1)}\}.$$
Obviously, we have the definition in terms of $\|\cdot\cdot\cdot\|$:
$$\|f\|^{(k)}=\max_{i+j=k}\{\|\partial^i\bar\partial^j f\|\}.$$
We point out that the function $\|\cdot\cdot\cdot\|^{(k)}$ on $C^{k+\alpha}(D)$ is not a norm since $\|f\|^{(k)}=0$ if and only if $f$ is
a polynomial of degree $k-1$. However it becomes norm when restricted to subspaces $C_0^{k+\alpha}(D)$, which is to be proved below and is one of important facts used in
this paper. First we obtain some useful estimates.
\begin{lem}
If $f\in C_0^{k+\alpha}(D)$, then
$$\|f\|\leq \frac{6^k}{k!}R^k\|f\|^{(k)}.$$
\end{lem}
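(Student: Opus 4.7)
The plan is to represent $f$ via a Taylor-integral formula at the origin and then bound the resulting integral in the norm $\|\cdot\|$ using the Banach algebra property (Lemma 2.1). Since $f\in C_0^{k+\alpha}(D)$ means $\partial^i\bar\partial^j f(0)=0$ for all $i+j\le k-1$, for each fixed $z\in D$ the one-variable function $h(t):=f(tz)$ satisfies $h(0)=h'(0)=\cdots=h^{(k-1)}(0)=0$. The integral form of Taylor's theorem then gives
$$f(z) \;=\; h(1) \;=\; \int_0^1 \frac{(1-t)^{k-1}}{(k-1)!}\,h^{(k)}(t)\,dt \;=\; \int_0^1 \frac{(1-t)^{k-1}}{(k-1)!}\sum_{i+j=k}\binom{k}{i}z^i\bar z^j\,\partial^i\bar\partial^j f(tz)\,dt.$$

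I then take the norm $\|\cdot\|$ through the integral. Both $|\cdot|_D$ and $(2R)^\alpha H_\alpha[\cdot]$ are sub-additive under integration with respect to $t$, which follows from the triangle inequality applied to Riemann sums, so $\|f\|$ is bounded by the integral of the norm of the integrand. For each monomial product inside, Lemma 2.1 yields
$$\bigl\|z^i\bar z^j\cdot\partial^i\bar\partial^j f(tz)\bigr\|_z \;\le\; \|z^i\bar z^j\|\cdot\|\partial^i\bar\partial^j f(t\,\cdot)\|_z.$$
Since $t\in[0,1]$, the map $z\mapsto tz$ sends $D$ into $D$ and cannot inflate either the sup norm or the Hölder seminorm, so $\|\partial^i\bar\partial^j f(t\,\cdot)\|_z\le\|\partial^i\bar\partial^j f\|\le\|f\|^{(k)}$. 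For the monomial $z^i\bar z^j$ with $i+j=k$, a direct computation gives $|z^i\bar z^j|_D\le R^k$ and, from $|z_1^i\bar z_1^j-z_2^i\bar z_2^j|\le kR^{k-1}|z_1-z_2|$ combined with $|z_1-z_2|\le 2R$, $(2R)^\alpha H_\alpha[z^i\bar z^j]\le 2kR^k$, so $\|z^i\bar z^j\|\le (2k+1)R^k$.

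Assembling these inequalities and using $\sum_{i+j=k}\binom{k}{i}=2^k$ together with $\int_0^1(1-t)^{k-1}/(k-1)!\,dt=1/k!$ yields
$$\|f\| \;\le\; \frac{2^k(2k+1)R^k}{k!}\,\|f\|^{(k)}.$$
The proof is finished by the elementary inequality $2^k(2k+1)\le 6^k$ for $k\ge 1$, which is immediate by induction: the base case is an equality, and the step reduces to $2(2k+3)\le 6(2k+1)$, i.e.\ $4k+6\le 12k+6$.

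The main obstacle is the bookkeeping: one must legitimately commute the norm $\|\cdot\|$ with the $t$-integral and verify that scaling the argument by $t\le 1$ does not worsen the Hölder seminorm. A more naive alternative would be to induct on $k$, using the base case $\|f\|\le 6R\|f\|^{(1)}$ (valid for $f(0)=0$) combined with $\|\partial f\|,\|\bar\partial f\|\le \tfrac{6^{k-1}R^{k-1}}{(k-1)!}\|f\|^{(k)}$ from the inductive hypothesis applied to $\partial f,\bar\partial f\in C_0^{k-1+\alpha}(D)$; but this approach loses a factor of $k$ and produces only the weaker bound $6^kR^k/(k-1)!$, so the Taylor-integral route above is essential for getting $k!$ in the denominator.
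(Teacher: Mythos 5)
Your proof is correct and follows essentially the same strategy as the paper: represent $f(z)$ via Taylor's theorem at the origin (exploiting the vanishing of derivatives up to order $k-1$), push the norm $\|\cdot\|$ through the integral, and bound each term with the Banach-algebra inequality from Lemma~2.1. You use the single-integral remainder form $\int_0^1\frac{(1-t)^{k-1}}{(k-1)!}(\cdots)\,dt$ while the paper uses the equivalent iterated integral over the simplex, which accounts for the same $1/k!$; that is a cosmetic difference. The one substantive variation is how $\|z^i\bar z^j\|$ is handled: the paper simply invokes $\|z^i\bar z^j\|\le\|z\|^i\|\bar z\|^j=(3R)^k$ (having checked $\|z\|=\|\bar z\|=3R$), giving $2^k\cdot 3^k=6^k$ immediately, whereas you compute a Lipschitz bound for the monomial directly to get $\|z^i\bar z^j\|\le(2k+1)R^k$, yielding the sharper $2^k(2k+1)R^k/k!$ which you then relax to $6^kR^k/k!$ by an auxiliary induction. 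Your route is valid (and incidentally shows the constant $6^k$ is not optimal), but it introduces an extra elementary lemma where the paper's one-line Banach-algebra estimate suffices.
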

\begin{proof}
Let $f\in C_0^{k+\alpha}(D)$, then
\begin{eqnarray*}
f(z)&=&\int_0^1\int_0^{t_{k-1}}\cdot\cdot\cdot\int_0^{t_1}\left\{\frac{d^k}{dt^k}f(t z)\right\}dtdt_1\cdot\cdot\cdot dt_{k-1}\nonumber\\
 &=&\int_0^1\int_0^{t_{k-1}}\cdot\cdot\cdot\int_0^{t_1}\left\{\sum_{i+j=k}\partial^i\bar\partial^j f(t z)z^i\bar z^j \right\}dtdt_1\cdot\cdot\cdot dt_{k-1}\nonumber\\
&=&\sum_{i+j=k}\left\{\int_0^1\int_0^{t_{k-1}}\cdot\cdot\cdot\int_0^{t_1}\partial^i\bar\partial^j f(t z) dtdt_1\cdot\cdot\cdot dt_{k-1}\right\}z^i\bar z^j\nonumber
\end{eqnarray*}
Applying norm inequality, we obtain
\begin{eqnarray*}
\|f\|&\leq & \sum_{i+j=k}\frac{1}{k!}\|\partial^i\bar\partial^j f\|\|z^i\bar z^j\|\nonumber\\
&\leq &\sum_{i+j=k}\frac{1}{k!}\|\partial^i\bar\partial^j f\|\|z\|^k\leq\frac{2^k}{k!}(3R)^k\|f\|^{(k)}\nonumber,
\end{eqnarray*}
where we have used $\|z\|=3R$, which is easily verified.
\end{proof}
\begin{lem}
If $f\in C_0^{m+\alpha}(D)$, then, for $i+j=l\leq m$,
$$\|\partial ^i\bar\partial^j f\|\leq \frac{6^{m-l}}{(m-l)!}R^{m-l}\|f\|^{(m)}.$$
\end{lem}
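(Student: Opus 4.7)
The plan is to reduce this to Lemma 2.3 applied to the derivative $\partial^i\bar\partial^j f$ itself.

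First I would observe that if $f\in C_0^{m+\alpha}(D)$ and $i+j=l\leq m$, then the function $g=\partial^i\bar\partial^j f$ lies in $C_0^{(m-l)+\alpha}(D)$. Indeed, for any multi-index with $a+b\leq (m-l)-1$, we have $\partial^a\bar\partial^b g(0)=\partial^{a+i}\bar\partial^{b+j}f(0)$, and since $(a+i)+(b+j)\leq m-1$, this vanishes by the hypothesis $f\in C_0^{m+\alpha}(D)$. So $g$ satisfies the vanishing conditions needed to apply the previous lemma at level $m-l$.

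Next I would apply Lemma 2.3 to $g$ with $k$ replaced by $m-l$, obtaining
\[
\|\partial^i\bar\partial^j f\|=\|g\|\leq \frac{6^{m-l}}{(m-l)!}R^{m-l}\|g\|^{(m-l)}.
\]
It remains to compare $\|g\|^{(m-l)}$ with $\|f\|^{(m)}$. Using the explicit formula $\|h\|^{(k)}=\max_{a+b=k}\|\partial^a\bar\partial^b h\|$ stated just before Lemma 2.3, one computes
\[
\|g\|^{(m-l)}=\max_{a+b=m-l}\|\partial^{a+i}\bar\partial^{b+j}f\|\leq \max_{c+d=m}\|\partial^c\bar\partial^d f\|=\|f\|^{(m)},
\]
since every pair $(a+i,b+j)$ with $a+b=m-l$ satisfies $(a+i)+(b+j)=m$.

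Combining these two estimates yields the desired inequality. There is no real obstacle here; the only thing to be careful about is verifying the vanishing conditions that justify invoking Lemma 2.3 at the reduced order, and observing that the max in the definition of $\|g\|^{(m-l)}$ is taken over a subset of the indices appearing in $\|f\|^{(m)}$.
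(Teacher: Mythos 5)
Your proof is correct and follows the same route as the paper: observe that $\partial^i\bar\partial^j f\in C_0^{(m-l)+\alpha}(D)$, apply Lemma 2.3 at order $m-l$, and bound $\|\partial^i\bar\partial^j f\|^{(m-l)}$ by $\|f\|^{(m)}$. You merely spell out the verification of the vanishing conditions and the norm comparison that the paper leaves implicit.
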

\begin{proof}
Let  $f\in C_0^{m+\alpha}(D)$. If $i+j=l$, then $\partial^i\bar\partial^j f\in C_0^{m-l+\alpha}(D)$. By Lemma 2.3, we have
$$\|\partial^i\bar\partial^j f\|\leq \frac{6^{m-l}}{(m-l)!}R^{m-l}\|\partial^i\bar\partial^j f\|^{(m-l)}\leq \frac{6^{m-l}}{(m-l)!}R^{m-l}\| f\|^{(m)}.$$
\end{proof}
An immediate corollary is the following:
\begin{lem}
If $f\in C_0^{m+\alpha}(D)$, then, for $l\leq m$,
$$\|f\|^{(l)}\leq \frac{6^{m-l}}{(m-l)!}R^{m-l}\|f\|^{(m)}.$$
\end{lem}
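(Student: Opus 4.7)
The plan is to derive Lemma 2.5 as an immediate corollary of Lemma 2.4 together with the unpacked definition of $\|\cdot\|^{(l)}$ in terms of the norms of individual partial derivatives. Specifically, the paper has already established the identity
$$\|f\|^{(l)}=\max_{i+j=l}\{\|\partial^i\bar\partial^j f\|\},$$
so it suffices to bound each summand on the right by the claimed quantity.

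First I would fix $f\in C_0^{m+\alpha}(D)$ and an arbitrary pair $(i,j)$ with $i+j=l\leq m$. By Lemma 2.4 applied to this pair, I get
$$\|\partial^i\bar\partial^j f\|\leq \frac{6^{m-l}}{(m-l)!}R^{m-l}\|f\|^{(m)}.$$
Since the right-hand side depends only on $l$ (and not on the particular splitting $i+j=l$), I may take the maximum over all such splittings on the left-hand side, yielding
$$\|f\|^{(l)}=\max_{i+j=l}\|\partial^i\bar\partial^j f\|\leq \frac{6^{m-l}}{(m-l)!}R^{m-l}\|f\|^{(m)},$$
which is the desired inequality.

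There is essentially no obstacle here: the lemma is a direct repackaging of Lemma 2.4. The only subtlety worth explicitly noting in the write-up is the fact invoked in the proof of Lemma 2.4, namely that differentiating a function in $C_0^{m+\alpha}(D)$ by $\partial^i\bar\partial^j$ with $i+j=l$ lands in $C_0^{m-l+\alpha}(D)$, so that Lemma 2.3 can be applied to the derivative; but since Lemma 2.4 itself is already proved in the excerpt, I would simply cite it.
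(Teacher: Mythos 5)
Your argument is correct and is exactly the one the paper has in mind: the paper states Lemma 2.5 as ``an immediate corollary'' of Lemma 2.4 without writing out a proof, and your spelling-out—apply Lemma 2.4 to each pair $(i,j)$ with $i+j=l$ and then take the maximum, using the identity $\|f\|^{(l)}=\max_{i+j=l}\|\partial^i\bar\partial^j f\|$—is precisely that corollary argument. Nothing is missing.
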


In order to verify that $C_0^{k+\alpha}(D)$ is a Banach space with norm $\|\cdot\cdot\cdot\|^{(k)}$, we need the following from [NW](7.1a).
\begin{lem}
Let $\{f_N\}_{N=1}^\infty$ be a sequence in $C^\alpha(D)$, with $\|f_N\|\leq M$; let $\{f_N\}$ converges at each point of a dense subset $A$ of $D$. Then $\{f_N\}$ converges to a function $f$ on $D$, in the norm $|\cdot\cdot\cdot|$; and $f\in C^\alpha(D)$, $\|f\|\leq M$.
\end{lem}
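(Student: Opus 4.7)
The plan is to run a standard Arzelà--Ascoli style compactness argument, then verify that the limit inherits the Hölder norm bound. The hypothesis $\|f_N\| \leq M$ gives simultaneously a uniform sup bound $|f_N(z)| \leq M$ and uniform equicontinuity via $|f_N(z) - f_N(z')| \leq (2R)^{-\alpha} M \, |z-z'|^\alpha$ for all $z,z' \in D$ and all $N$.

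First I would prove uniform convergence on $D$. Fix $\varepsilon > 0$ and choose $\delta > 0$ so that $(2R)^{-\alpha} M \, \delta^\alpha < \varepsilon/3$. Since $D$ is compact, cover it by finitely many open disks of radius $\delta$, and in each such disk select a point of the dense set $A$; call these points $a_1,\ldots,a_J$. Since $\{f_N\}$ converges pointwise on $A$, it is Cauchy at each $a_j$, and since there are only finitely many $a_j$, we can find $N_0$ such that $|f_N(a_j) - f_{N'}(a_j)| < \varepsilon/3$ for all $N,N' \geq N_0$ and all $j$. Given any $z \in D$, pick $a_j$ within distance $\delta$ of $z$; then by the triangle inequality and equicontinuity,
$$|f_N(z) - f_{N'}(z)| \leq |f_N(z) - f_N(a_j)| + |f_N(a_j) - f_{N'}(a_j)| + |f_{N'}(a_j) - f_{N'}(z)| < \varepsilon$$
for $N,N' \geq N_0$. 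Hence $\{f_N\}$ is uniformly Cauchy on $D$, and converges uniformly to some $f$, which is automatically continuous.

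Next I would check $\|f\| \leq M$ (which in particular yields $f \in C^\alpha(D)$). The key trick is to fix an arbitrary triple $(z_0, z_1, z_2) \in D \times D \times D$ with $z_1 \neq z_2$ and observe that
$$|f_N(z_0)| + (2R)^\alpha \frac{|f_N(z_1) - f_N(z_2)|}{|z_1 - z_2|^\alpha} \leq |f_N| + (2R)^\alpha H_\alpha[f_N] = \|f_N\| \leq M.$$
The left-hand side converges, by pointwise (indeed uniform) convergence of $f_N$ to $f$, to the analogous expression with $f$ in place of $f_N$. Since the triple was arbitrary, taking the supremum separately over $z_0$ and over the pair $(z_1, z_2)$ (which are independent variables in the sum) yields
$$|f| + (2R)^\alpha H_\alpha[f] \leq M,$$
i.e.\ $\|f\| \leq M$.

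The only subtle point, which I view as the main obstacle to writing the proof cleanly, is Step~3: one must not try to bound $|f|$ and $H_\alpha[f]$ separately and then add, since $\liminf$ is superadditive rather than subadditive. The correct approach is to keep the sup inside the limit by fixing the three test points first, passing to the limit in $N$, and only afterward taking suprema over the decoupled variables $z_0$ and $(z_1,z_2)$. Everything else is standard equicontinuity machinery.
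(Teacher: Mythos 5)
The paper does not prove this statement; it cites it as Lemma (7.1a) of Nijenhuis--Woolf, so there is no in-paper proof to compare against. Your proof is correct and is the standard compactness argument one would expect: the bound $H_\alpha[f_N]\leq (2R)^{-\alpha}M$ provides equicontinuity, a finite $\delta$-net drawn from the dense set $A$ upgrades pointwise convergence on $A$ to uniform Cauchyness on $D$, and the limit inherits the norm bound by a Fatou-type passage to the limit along fixed test points before taking suprema. The ``fix $(z_0,z_1,z_2)$ first, pass to the limit in $N$, then take the sup'' argument is valid because $\sup_{(z_0,z_1,z_2)}\bigl[g(z_0)+h(z_1,z_2)\bigr]=\sup_{z_0}g+\sup_{(z_1,z_2)}h$ when the two terms depend on disjoint variables.

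One small correction to your closing remark: the ``naive'' route of bounding $|f|$ and $H_\alpha[f]$ separately does in fact work, and superadditivity of $\liminf$ is what makes it work rather than what obstructs it. From uniform convergence $|f|=\lim_N|f_N|$, and for each fixed pair $(z_1,z_2)$ one gets $\frac{|f(z_1)-f(z_2)|}{|z_1-z_2|^\alpha}\leq\liminf_N H_\alpha[f_N]$, hence $H_\alpha[f]\leq\liminf_N H_\alpha[f_N]$. Then
$$|f|+(2R)^\alpha H_\alpha[f]\ \leq\ \liminf_N|f_N|+(2R)^\alpha\liminf_N H_\alpha[f_N]\ \leq\ \liminf_N\bigl(|f_N|+(2R)^\alpha H_\alpha[f_N]\bigr)\ \leq\ M,$$
where the middle inequality is precisely superadditivity of $\liminf$ applied in the favorable direction. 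So both routes reach $\|f\|\leq M$; yours is clean, but the alternative is not actually blocked.
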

\begin{lem}
Let $\{f_N\}$ be a sequence in $C_0^{k+\alpha}(D)$, with $\|f_N\|^{(k)}\leq M$, and if $\{\partial^i\bar\partial^j f_N\}$, for all $i,j, i+j=k$, are Cauchy sequences in the norm $|\cdot\cdot\cdot|$, then there is a function $f\in C_0^{k+\alpha}(D)$ such that $|\partial^i\bar\partial^j f_N-\partial^i\bar\partial^j f|\to 0$ as $N\to\infty$ for all $i,j, 0\leq i+j\leq k$, and with $\|f\|^{(k)}\leq M$.
\end{lem}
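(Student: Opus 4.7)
The plan is to first establish sup-norm convergence of the top-order derivatives, then propagate convergence to every lower order via the integral identity that underlies Lemma 2.3, and finally identify the limits as the classical partial derivatives of a single function $f$ belonging to $C_0^{k+\alpha}(D)$.

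\emph{Top order.} For each $(i,j)$ with $i+j=k$, the hypothesis $\|f_N\|^{(k)}\le M$ gives $\|\partial^i\bar\partial^j f_N\|\le M$, and the sup-norm Cauchy assumption produces pointwise convergence on $D$. Lemma 2.6 then yields $g_{ij}\in C^\alpha(D)$ with $\|g_{ij}\|\le M$ and $|\partial^i\bar\partial^j f_N-g_{ij}|_D\to 0$.

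\emph{Propagation to lower orders.} For $l<k$ and $i+j=l$, the function $\partial^i\bar\partial^j f_N$ lies in $C_0^{(k-l)+\alpha}(D)$, so the iterated Taylor expansion at the origin used to derive Lemma 2.3 represents $\partial^i\bar\partial^j f_N(z)$ as a sum over $a+b=k-l$ of $z^a\bar z^b$ times an iterated integral of $\partial^{i+a}\bar\partial^{j+b}f_N(tz)$. Subtracting the same identity for $f_M$ and taking absolute values produces
$$
|\partial^i\bar\partial^j(f_N-f_M)|_D \le C_{k,l}\,R^{k-l}\max_{a+b=k-l}|\partial^{i+a}\bar\partial^{j+b}(f_N-f_M)|_D,
$$
so $\{\partial^i\bar\partial^j f_N\}$ is Cauchy in $|\cdot|_D$. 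Call the limit $h_{ij}$; replacing $\partial^{i+a}\bar\partial^{j+b}f_N$ by $g_{i+a,j+b}$ in the integral formula provides an explicit expression for $h_{ij}$, from which continuity on $D$ and $h_{ij}(0)=0$ are immediate.

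\emph{Assembly.} Set $f:=h_{0,0}$. Uniform convergence of $f_N$ to $f$ together with uniform convergence of $\partial f_N$ to $h_{1,0}$ and $\bar\partial f_N$ to $h_{0,1}$ forces $f\in C^1(D)$ with $\partial f=h_{1,0}$, $\bar\partial f=h_{0,1}$ by the standard theorem on differentiating uniform limits; iterating this $k$ times gives $\partial^i\bar\partial^j f=h_{ij}$ for $i+j<k$ and $\partial^i\bar\partial^j f=g_{ij}\in C^\alpha(D)$ for $i+j=k$. The vanishing identities $\partial^i\bar\partial^j f_N(0)=0$ for $i+j\le k-1$ pass to the uniform limit, so $f\in C_0^{k+\alpha}(D)$, and $\|f\|^{(k)}=\max_{i+j=k}\|g_{ij}\|\le M$.

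The only real obstacle is the propagation step: one must use the vanishing at the origin to rewrite $\partial^i\bar\partial^j f_N$ as an iterated integral of top-order derivatives, because the Cauchy hypothesis is given only in $|\cdot|_D$ at top order, and a naive use of Lemma 2.4 would bring in the H\"older seminorm, which we do not control uniformly. Once the integral representation is in hand, the rest is standard calculus.
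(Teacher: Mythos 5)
Your argument tracks the paper's own proof closely: top-order convergence via Lemma 2.6, propagation of the Cauchy property to lower orders via the iterated-integral (Taylor at the origin) representation that exploits the vanishing of $f_N$ up to order $k-1$, and the bound $\|f\|^{(k)}\le M$ by passing to the limit. The one genuine divergence is in how you identify the limit functions $h_{ij}$ and $g_{ij}$ as derivatives of a single $f$: the paper applies Lemma 2.2 (the H\"older Taylor-remainder estimate) to $f_N$, notes that the bound is uniform in $N$, and lets $N\to\infty$ so that all the derivative identities fall out of the limiting Taylor expansion in one step, whereas you iterate the elementary theorem on term-by-term differentiation of uniformly convergent sequences $k$ times to peel off one order of derivative at a time. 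Both routes are correct; the paper's yields the H\"older continuity of the top-order derivatives as a byproduct of the same estimate, but since you already have $g_{ij}\in C^\alpha(D)$ with $\|g_{ij}\|\le M$ from Lemma 2.6, the distinction is cosmetic and your version is, if anything, slightly more elementary.
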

\begin{proof}
For $i, j, i+j=k$, consider $g_N^{i,j}=\partial^i\bar\partial^j f_N$, then $g_N^{i,j}\in C^\alpha(D)$ and $\|g_N^{i,j}\|\leq M$. Applying to Lemma 2.6, we
have functions $g^{i,j}\in C^\alpha(D)$ such that $|g_N^{i,j}-g^{i,j}|\to 0$ as $ N \to \infty$, with $\|g^{i,j}\|\leq M $.
Define $f$ by
$$f(z)=\int_0^1\int_0^{t_{k-1}}\cdot\cdot\cdot\int_0^{t_1}\left\{\sum_{i+j=k}g^{i,j} (t z)z^i\bar z^j \right\} dtdt_1\cdot\cdot\cdot dt_{k-1}.$$
We have
$$f_N(z)=\sum_{i+j=k}\left\{\int_0^1\int_0^{t_{k-1}}\cdot\cdot\cdot\int_0^{t_1}\partial^i\bar\partial^j f_N(t z) dtdt_1\cdot\cdot\cdot dt_{k-1}\right\}z^i\bar z^j$$
Therefore
$$f_N(z)-f(z)=\sum_{i+j=k}\left\{\int_0^1\int_0^{t_{k-1}}\cdot\cdot\cdot\int_0^{t_1}\left\{\partial^i\bar\partial^j f_N(t z)-g^{i,j}(t z)\right\} dtdt_1\cdot\cdot\cdot dt_{k-1}\right\}z^i\bar z^j,$$
whence
$$|f_N-f|\leq \frac{R^k}{k!}\sum_{i+j=k}|\partial^i\bar\partial^j f_N-g^{i,j}|,$$
which goes to $0$ as $N\to\infty$, implying $f_N\to f$ in the norm $|\cdot\cdot\cdot|$.

For $i+j=l\leq k-1$ we want to prove that $\{\partial^i\bar\partial^j f_N(z)\}$ are Cauchy sequences. Indeed,
Since $f_N$ vanishes up to $ k-1$ order at the origin, then for $i+j=l\leq k-1$, $\partial ^i\bar\partial^j f_N$ vanishes to
$k-1-l$ order at the origin. Thus, we have the formula, applying (7)
\begin{eqnarray}
\partial^i\bar\partial^j f_N(z)&=&\int_0^1\cdot\cdot\cdot\int_0^{t_{m-1-l}}\frac{d^{m-l}}{dt^{m-l}}\partial^i\bar\partial^j f_N(tz)dt\cdot\cdot\cdot dt_{m-1-l}\nonumber\\
&=&\int_0^1\cdot\cdot\cdot\int_0^{t_{m-1-l}}\sum_{p+q=m-l}\partial^p\bar\partial^q\partial^i\bar\partial^j f_N(tz)z^p\bar z^qdt\cdot\cdot\cdot dt_{m-1-l}\nonumber\\
&=&\sum_{p+q=m-l}\int_0^1\cdot\cdot\cdot\int_0^{t_{k-1-l}}\partial^p\bar\partial^q\partial^i\bar\partial^j f_N(tz)dt\cdot\cdot\cdot dt_{k-1-l}z^p\bar z^q,
\end{eqnarray}
Then
\begin{eqnarray}
&&\partial ^i\bar\partial^j f_N(z)-\partial ^i\bar\partial^j f_{N'}(z)\nonumber\\
&=&\sum_{p+q=k-l}\int_0^1\cdot\cdot\cdot\int_0^{t_{m-1-l}}\partial^p\bar\partial^q\partial^i\bar\partial^j \{f_N(tz)-f_{N'}(tz)\}dt\cdot\cdot\cdot dt_{k-1-l}z^p\bar z^q,
\end{eqnarray}
Then
\begin{eqnarray}
|\partial ^i\bar\partial^j f_N(z)-\partial ^i\bar\partial^j f_{N'}(z)|\leq\frac{R^{m-l}}{(m-l)!}\sum_{i+j=k}|\partial ^i\bar\partial^j f_N-\partial ^i\bar\partial^j f_{N'}|
\end{eqnarray}
which proves that $\{\partial^i\bar\partial^j f_N(z)\} (i+j\leq k-1)$ are Cauchy sequences since $\{\partial^i\bar\partial^j f_N(z)\} (i+j=k)$ are.
We assume that for $i+j=l\leq k-1$, $\partial ^i\bar\partial^j f_N(z)$ converges to $g^{i,j}$ in norm $|\cdot\cdot\cdot|$.
Thus, applying Lemma 2.2,  we have
\begin{eqnarray*}
&&\bigg|f_N(z')-\sum_{l=0}^k\frac{1}{l!}\sum_{i+j=l}\partial^i\bar\partial^j f_N(z)(z'-z)^i{(\bar z'-\bar z)}^j\bigg|\nonumber\\
&\leq& \frac{1}{k!}\sum_{i+j=k}H_\alpha[\partial^i\bar\partial^j f_N]|z'-z|^{k+\alpha}\nonumber\\
&\leq& \frac{2^k}{k!}(2R)^{-\alpha}\|f_N\|^{(k)}|z'-z|^{k+\alpha}
\leq \frac{2^k}{k!}(2R)^{-\alpha}M|z'-z|^{k+\alpha},\nonumber
\end{eqnarray*}
which is independent of $N$, letting $N\to \infty$
we have
$$|f(z')-f(z)-\sum_{l=1}^k\frac{1}{l!}\sum_{i+j=l}g^{i,j}(z'-z)^i{(\bar z'-\bar z)}^j|\leq \frac{2^k}{k!}(2R)^{-\alpha}M|z'-z|^{k+\alpha},$$
which implies, by definition of differentiability, $g^{i,j}=\partial^i\bar\partial^j f$. This implies $\|f\|^{(k)}\leq M$  by taking limit
from $\|f_N\|^{(k)}\leq M$. The convergence for $i+j\leq k-1$, follows from that of $i+j=k$ by (12).
\end{proof}
\begin{lem}
The function space $C_0^{k+\alpha}(D)$ equipped with the function $\|\cdot\cdot\cdot\|^{(k)}$ is a Banach space.
\end{lem}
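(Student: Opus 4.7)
The plan is to verify the two properties in turn. First, that $\|\cdot\|^{(k)}$ really is a norm on $C_0^{k+\alpha}(D)$ (not just on $C^{k+\alpha}(D)$), and second, that the space is complete. The previous lemmas essentially do all the heavy lifting; the task is to assemble them.

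For the norm property, homogeneity and the triangle inequality are inherited directly from the corresponding properties of $\|\cdot\|$ on $C^\alpha(D)$ (Lemma 2.1), since $\|f\|^{(k)}$ is a maximum over expressions $\|\partial^i\bar\partial^j f\|$. The only point that requires the vanishing hypothesis at the origin is positive definiteness. If $\|f\|^{(k)} = 0$ for some $f\in C_0^{k+\alpha}(D)$, then Lemma 2.3 gives
\[
\|f\|\leq \frac{6^k}{k!}R^k \|f\|^{(k)} = 0,
\]
so $f\equiv 0$. (Equivalently, $\partial^i\bar\partial^j f = 0$ for $i+j=k$ and the vanishing of lower-order jets at $0$ forces $f$ to be the zero polynomial.)

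For completeness, I would start with a Cauchy sequence $\{f_N\}\subset C_0^{k+\alpha}(D)$ in the norm $\|\cdot\|^{(k)}$. By definition of $\|\cdot\|^{(k)}$, each $\{\partial^i\bar\partial^j f_N\}$ with $i+j=k$ is Cauchy in the H\"older norm $\|\cdot\|$ on $C^\alpha(D)$. Since $(C^\alpha(D),\|\cdot\|)$ is a Banach algebra by Lemma 2.1, there exist $g^{i,j}\in C^\alpha(D)$ with $\|\partial^i\bar\partial^j f_N - g^{i,j}\|\to 0$. In particular the convergence holds in the sup norm $|\cdot|$, and $\|f_N\|^{(k)}$ is uniformly bounded, say by $M$.

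Now I invoke Lemma 2.7, which produces $f\in C_0^{k+\alpha}(D)$ with $\|f\|^{(k)}\leq M$ satisfying $\partial^i\bar\partial^j f_N\to \partial^i\bar\partial^j f$ in $|\cdot|$ for all $i+j\leq k$. Uniqueness of sup-norm limits forces $g^{i,j} = \partial^i\bar\partial^j f$ for $i+j=k$. Then
\[
\|f_N-f\|^{(k)} = \max_{i+j=k}\|\partial^i\bar\partial^j f_N - \partial^i\bar\partial^j f\| = \max_{i+j=k}\|\partial^i\bar\partial^j f_N - g^{i,j}\|\longrightarrow 0,
\]
which establishes that $f_N\to f$ in $\|\cdot\|^{(k)}$. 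There is no real obstacle here; the whole point is that the structural lemmas 2.1, 2.3, and 2.7 have been set up precisely so that this deduction is essentially automatic. The one subtlety worth flagging is that Lemma 2.6 alone only gives sup-norm convergence on $C^\alpha$; it is the combination with completeness of $(C^\alpha,\|\cdot\|)$ from Lemma 2.1 that upgrades the sup-norm limit $g^{i,j}$ to a H\"older limit and so yields convergence in the stronger norm $\|\cdot\|^{(k)}$.
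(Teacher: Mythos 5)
Your proof is correct, and it takes a mildly different route from the paper's for the completeness part. Both proofs invoke Lemma 2.7 to produce the candidate limit $f\in C_0^{k+\alpha}(D)$ with $\|f\|^{(k)}\leq M$. The difference is in how the convergence $\|f_N-f\|^{(k)}\to 0$ is then established. You use the completeness of $(C^\alpha(D),\|\cdot\|)$ from Lemma~2.1 to get $\|\cdot\|$-limits $g^{i,j}$ of $\partial^i\bar\partial^j f_N$ for $i+j=k$, identify $g^{i,j}=\partial^i\bar\partial^j f$ by uniqueness of sup-norm limits, and then read off $\|f_N-f\|^{(k)}=\max_{i+j=k}\|\partial^i\bar\partial^j f_N-g^{i,j}\|\to 0$ directly. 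The paper instead applies Lemma~2.7 a \emph{second} time to the tail differences $\{f_N-f_{N'}\}_{N'>N}$, which are bounded by $\epsilon$ in $\|\cdot\|^{(k)}$, to conclude $\|f_N-f\|^{(k)}\leq\epsilon$. Your version is a bit more streamlined in this last step and makes the role of the completeness of $C^\alpha$ explicit rather than routing it through Lemma~2.6/2.7; the paper's version keeps the argument entirely inside the bounded-plus-pointwise framework of Lemma~2.7. Both are sound. You also spell out the norm axioms, in particular positive definiteness via Lemma~2.3 and the vanishing-jet condition; the paper's displayed proof for this lemma only addresses completeness and leaves the norm property as the remark preceding Lemma~2.3.
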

\begin{proof}
Let $\{f_N\}$ be a Cauchy sequence in $\|\cdot\cdot\cdot\|^{(k)}$. Then for any $\epsilon>0$, there is $N_0$ so that if $N, N'>N_0$ it holds
\begin{eqnarray}
\|f_N-f_{N'}\|^{(k)}<\epsilon,
\end{eqnarray}
which implies
$$|\|f_N\|^{(k)}-\|f_{N'}\|^{(k)}|<\epsilon,$$
which implies $\{\|f_N\|^{(k)}\}$ are Cauchy sequence and therefore bounded by say $M$. Also by definition of $\|\cdot\cdot\cdot\|^{(k)}$, (?)implies $\partial^i\bar\partial^j f_N$ for
$i+j=k$ are Cauchy sequence in $|\cdot\cdot\cdot|$. By Lemma 2.7, there is a function $f\in C^{k+\alpha}_0(D)$ such that
$|\partial^i\bar\partial^j f_N-\partial^i\bar\partial^j f|\to 0$ for $i+j=k$ as $N\to\infty$, and $\|f\|^{(k)}\leq M$.

Now the sequence $\{f_N-f_{N'}\}_{N'=N+1}^\infty$ is bounded in $\|\cdot\cdot\cdot\|^{(k)}$ by $\epsilon$, and converges to $f_N-f$, with
$$\partial^i\bar\partial^j (f_N-f_{N'})\to \partial^i\bar\partial^j (f_N-f)$$
in $|\cdot\cdot\cdot|$ as $N'\to\infty$, $i+j=k$. By Lemma 2.7 again, it holds
$$\|f_N-f\|^{(k)}<\epsilon,$$
which implies that $f_N\to f$ in $\|\cdot\cdot\cdot\|^{(k)}$. The proof is complete.
\end{proof}

\section{Green operator and high order derivative formula}
\subsection{Basic definitions and properties}
The operators are defined for integrable functions on $D$ as follows:
\begin{eqnarray}
 Tf(z)&=&\frac{-1}{2\pi i}\int_D\frac{f(\zeta)d\bar{\zeta}\wedge d\zeta}{\zeta-z},\nonumber\\
Sf(z)&=&\frac{1}{2\pi i}\int_C\frac{f(\zeta)d\zeta}{\zeta-z},\nonumber\\
 S_bf(z)&=&\frac{1}{2\pi i}\int_C\frac{f(\zeta)d\bar\zeta}{\zeta-z},\nonumber\\
^2Tf(z)&=&\frac{-1}{2\pi i}\int_D\frac{f(\zeta)-f(z)}{(\zeta-z)^2}d\bar\zeta\wedge d\zeta.\nonumber
\end{eqnarray}
Using polar coordinates, we see that $Tf$ is defined for continuous $f$, and $^2Tf$ for $C^\alpha(D)$. The operator $S$ is the familiar Cauchy integral.
We also define related operators: $\overline T, \overline S,\overline S_b$, and $^2\overline T$ as follows: $\overline T(f)=\overline {T(\bar f)}$, $\overline S(f)=\overline {S(\bar f)}$, $\overline S_b(f)=\overline {S_b(\bar f)}$ and $^2\overline T(f)=\overline {^2T(\bar f)}$ . Specifically, they are given by
\begin{eqnarray}
 \overline T f(z)&=&\frac{-1}{2\pi i}\int_D\frac{f(\zeta)d\bar{\zeta}\wedge d\zeta}{\bar\zeta-\bar z},\nonumber\\
 \overline Sf(z)&=&\frac{-1}{2\pi i}\int_C\frac{f(\zeta)d\bar\zeta}{\bar\zeta-\bar z},\nonumber\\
  \overline S_bf(z)&=&\frac{-1}{2\pi i}\int_C\frac{f(\zeta)d\zeta}{\bar\zeta-\bar z},\nonumber\\
^2\overline T f(z)&=&\frac{-1}{2\pi i}\int_D\frac{f(\zeta)-f(z)}{(\bar \zeta-\bar z)^2}d\bar\zeta\wedge d\zeta.\nonumber
\end{eqnarray}
Importantly, we will use operators of $T^\mu\overline{T}^\nu$, which is a composition of $T,\overline T$. Here we assume $T^0=\mathrm{Id},\overline T^0=\mathrm{Id}$ and etc.
We note here that $S_b$ in this paper is the same as $\overline S$ in [NW]. The following estimate holds:
\begin{eqnarray}
 |Tf|\leq 4R|f|.
\end{eqnarray}
More generally, if $\triangle$ is a bounded domain, then $T_\triangle f$ is defined for continuous $f$ on $\triangle$ by
$$T_\triangle f(z)=\frac{-1}{2\pi i}\int_\triangle\frac{f(\zeta)d\bar{\zeta}\wedge d\zeta}{\zeta-z}, \mbox{ }S_\triangle f(z)=\frac{1}{2\pi i}\int_{\partial\triangle}\frac{f(\zeta) d\zeta}{\zeta-z} .$$
We have
$$ |T_\triangle f|\leq 2 \mathrm{diam}(\triangle)|f|_\triangle.$$
The fundamental property between operators $T,S$ is the following [NW](6.1a).
\begin{lem}
If $f\in C^1(D)$, then
$$ T\bar\partial f=f-Sf     \mbox{         on         }\mbox{} \mathrm{Int}(D),$$
$$ \overline T\partial f=f-\overline Sf     \mbox{         on         }\mbox{} \mathrm{Int}(D).$$
\end{lem}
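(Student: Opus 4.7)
The identity is the Cauchy--Pompeiu (generalized Cauchy) formula, and the plan is to derive it directly by Stokes' theorem on a punctured domain, in the standard way (matching the reference to [NW](6.1a) given in the statement). Fix $z \in \mathrm{Int}(D)$, choose $\epsilon > 0$ small enough that the closed disk $\overline{B_\epsilon(z)}$ lies in $\mathrm{Int}(D)$, and set $D_\epsilon = D \setminus \overline{B_\epsilon(z)}$. The key object is the smooth $1$-form $\omega = \frac{f(\zeta)}{\zeta - z}\, d\zeta$ on $\overline{D_\epsilon}$.

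The first step is to compute $d\omega$. Since $d\zeta \wedge d\zeta = 0$ and $1/(\zeta-z)$ is holomorphic in $\zeta$ on $\overline{D_\epsilon}$, only the antiholomorphic differentiation of $f$ survives and one gets $d\omega = \frac{\bar\partial f(\zeta)}{\zeta - z}\, d\bar\zeta \wedge d\zeta$, which is continuous on $\overline{D_\epsilon}$ because $f\in C^1(D)$. Applying Stokes' theorem with the induced orientation $\partial D_\epsilon = C - \partial B_\epsilon(z)$ (outer circle positively oriented, inner circle negatively) gives
$$\int_{D_\epsilon} \frac{\bar\partial f(\zeta)}{\zeta-z}\, d\bar\zeta \wedge d\zeta \;=\; \int_C \frac{f(\zeta)}{\zeta-z}\, d\zeta \;-\; \int_{\partial B_\epsilon(z)} \frac{f(\zeta)}{\zeta-z}\, d\zeta.$$

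The next step is to let $\epsilon \to 0^+$. Writing $\zeta - z = re^{i\theta}$, one has $d\bar\zeta \wedge d\zeta = 2i\, r\, dr\, d\theta$, so the Cauchy kernel $1/(\zeta-z)$ is locally integrable against area measure; dominated convergence yields that the left side converges to $\int_D \frac{\bar\partial f(\zeta)}{\zeta-z}\, d\bar\zeta \wedge d\zeta = -2\pi i\, T\bar\partial f(z)$. For the inner boundary integral, parametrizing $\zeta = z + \epsilon e^{i\theta}$ turns it into $\int_0^{2\pi} f(z+\epsilon e^{i\theta})\, i\, d\theta$, which tends to $2\pi i\, f(z)$ by continuity of $f$. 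The outer boundary integral is by definition $2\pi i\, Sf(z)$. Dividing through by $2\pi i$ and rearranging signs produces $T\bar\partial f(z) = f(z) - Sf(z)$ on $\mathrm{Int}(D)$, which is the first identity.

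For the conjugate identity $\overline T \partial f = f - \overline S f$, I would apply the identity just proved to $\bar f \in C^1(D)$ to obtain $T\bar\partial \bar f = \bar f - S\bar f$, i.e., $T(\overline{\partial f}) = \bar f - S\bar f$, and then take complex conjugates of both sides. The definitions $\overline T(g) = \overline{T(\bar g)}$ and $\overline S(g) = \overline{S(\bar g)}$ then convert this directly into $\overline T\partial f(z) = f(z) - \overline S f(z)$ on $\mathrm{Int}(D)$, finishing the proof. The only (rather mild) obstacle is bookkeeping with the sign convention $d\bar\zeta \wedge d\zeta = -d\zeta \wedge d\bar\zeta$ that is built into the definitions of $T$ and $\overline T$; beyond this and the routine handling of the Cauchy singularity at $\zeta=z$, no analytic difficulty arises.
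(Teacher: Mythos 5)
Your proof is correct and is the standard Cauchy--Pompeiu argument. The paper itself does not supply a proof of this lemma but cites it as [NW](6.1a); the Stokes-theorem-on-a-punctured-domain computation you give (with the correct sign bookkeeping for $d\bar\zeta\wedge d\zeta$ and the paper's $\frac{-1}{2\pi i}$ normalization of $T$) is exactly the argument behind that reference, and the conjugation trick for the second identity is the natural way to deduce it from the first given the paper's definitions $\overline T(f)=\overline{T(\bar f)}$ and $\overline S(f)=\overline{S(\bar f)}$.
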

\begin{lem}
If $f\in C^{m+\alpha}(D)$ and $\mu+\nu=m$, then
$$\T^\nu\overline T^\mu(\partial^\mu\bar\partial^\nu f)=f-\sum_{j=0}^{\nu-1}T^j(S(\bar\partial^j f))-\sum_{j=0}^{\mu-1}T^\nu\overline
T^j(\overline S(\partial^j\bar\partial^\nu f)) \mbox{ on }\mathrm{Int}(D).$$
if $\mu, \nu\geq 1$; otherwise
$$T^m(\bar\partial^m g)=g-\sum_{j=0}^{m-1}T^j(S(\bar\partial^j g)),$$
$$\overline T^m(\partial^m g)=g-\sum_{j=0}^{m-1}\overline T^j(\overline S(\partial^j g)).$$
\end{lem}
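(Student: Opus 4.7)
The plan is to bootstrap from the single-operator identities in Lemma 3.1 by iterating, treating the pure cases ($\mu=0$ or $\nu=0$) first and then combining them for the mixed case.

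First I would establish the pure identity $T^m(\bar\partial^m g) = g - \sum_{j=0}^{m-1} T^j(S(\bar\partial^j g))$ by induction on $m$. The base case $m=1$ is exactly Lemma 3.1. For the inductive step, apply the $(m-1)$-version of the identity to the function $\bar\partial g$, which lies in $C^{m-1+\alpha}(D)$, to obtain
$$T^{m-1}(\bar\partial^{m-1}(\bar\partial g)) = \bar\partial g - \sum_{j=0}^{m-2} T^j(S(\bar\partial^{j+1} g)).$$
Apply $T$ to both sides (legitimate since each term on the right is continuous on $D$), use $T\bar\partial g = g - Sg$ from Lemma 3.1, and reindex the sum as $\sum_{j=1}^{m-1} T^j(S(\bar\partial^j g))$. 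Combining with the $Sg$ term absorbs the $j=0$ piece, completing the induction. The conjugate identity for $\overline T^m(\partial^m g)$ follows by the same argument with $T,S,\bar\partial$ replaced by $\overline T,\overline S,\partial$, or by applying complex conjugation to the first identity.

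For the mixed case $\mu,\nu \geq 1$, I would apply the pure identity for $\overline T^\mu$ to the function $g = \bar\partial^\nu f \in C^{m-\nu+\alpha}(D) = C^{\mu+\alpha}(D)$, giving
$$\overline T^\mu(\partial^\mu\bar\partial^\nu f) = \bar\partial^\nu f - \sum_{j=0}^{\mu-1} \overline T^j(\overline S(\partial^j\bar\partial^\nu f)).$$
Then apply $T^\nu$ to both sides. The operators $T$ and $\overline T$ are simply integral operators against fixed continuous kernels in $\zeta$, so applying $T^\nu$ commutes through the finite sum, yielding
$$T^\nu \overline T^\mu(\partial^\mu\bar\partial^\nu f) = T^\nu(\bar\partial^\nu f) - \sum_{j=0}^{\mu-1} T^\nu\overline T^j(\overline S(\partial^j\bar\partial^\nu f)).$$
Finally substitute the pure identity $T^\nu(\bar\partial^\nu f) = f - \sum_{j=0}^{\nu-1} T^j(S(\bar\partial^j f))$ to arrive at the claimed formula on $\mathrm{Int}(D)$.

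The only delicate point is justifying that each intermediate step takes place on $\mathrm{Int}(D)$ with the correct regularity: one needs $\partial^j\bar\partial^\nu f$ and $\bar\partial^j f$ to be continuous (hence integrable on $D$ and with well-defined boundary trace on $C$) so that $T$, $\overline T$, $S$, $\overline S$ apply and Lemma 3.1 is valid at each step. Since $f \in C^{m+\alpha}(D)$ all derivatives of order $\leq m$ lie in $C^\alpha(D)$, so this is immediate; no harder estimates are required beyond the bookkeeping of indices in the two telescoping sums.
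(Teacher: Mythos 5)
Your proposal is correct and follows essentially the same route as the paper: the paper's proof consists of the single line ``just apply Lemma 3.1 repeatedly,'' followed by a statement (without proof) of the two pure identities for $T^\nu(\bar\partial^\nu g)$ and $\overline T^\mu(\partial^\mu g)$, which are then implicitly composed. You have simply filled in the elided details — the induction on the pure identities and the explicit composition $T^\nu$ applied to the $\overline T^\mu$ identity evaluated at $g = \bar\partial^\nu f$ — and your bookkeeping of indices and regularity is accurate.
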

\begin{proof}
Just apply Lemma 3.1 repeatedly. In fact, we have two identities.
If $g\in C^{m-\nu+\alpha}(D)$ then
$$T^\nu(\bar\partial^\nu g)=g-\sum_{j=0}^{\nu-1}T^j(S(\bar\partial^j g)).$$
If $g\in C^{m-\mu+\alpha}(D)$ then
$$\overline T^\mu(\partial^\mu g)=g-\sum_{j=0}^{\mu-1}\overline T^j(\overline S(\partial^j g)).$$
\end{proof}
\begin{rem}
This lemma shows $f$ can be represented by integral equations through derivatives of all orders.
We also remark that since $\bar\partial S (f)=\partial\bar S(f)=0$, it holds
$$\partial^\mu\bar\partial^\nu\{\sum_{j=0}^{\nu-1}T^j(S(\bar\partial^j f))+\sum_{j=0}^{\mu-1}T^\nu\overline T^j(\overline S(\partial^j\bar\partial^\nu f))\}=0.$$
\end{rem}
\subsection{A new high order integral operator}
For $k\geq 0$, we define a new operator $^{k+2}Tf$ on $C^{k+\alpha}(D)$.
$$^{k+2}Tf(z)=\frac{-(k+1)!}{2\pi i}\int_D\frac{f(\zeta)-P_{k}(\zeta,z)}{(\zeta-z)^{k+2}}d\bar\zeta\wedge d\zeta,$$
where $P_{k}(\zeta,z)$ is Taylor expansion of $f$ at $z$ of degree $k$. Namely
$$P_{k}(\zeta,z)=\sum_{l=0}^k\frac{1}{l!}\sum_{i+j=l}\partial^i\bar\partial^j f(z)(\zeta-z)^i{(\bar \zeta-\bar z)}^j.$$
We also define $^{k+2}\overline Tf=\overline {^{k+2}T(\bar f)}$.
More generally, if $\Delta$ is a bounded domain, then $^{k+2}T_\Delta$ is  defined as
$$^{k+2}T_\Delta f(z)=\frac{-(k+1)!}{2\pi i}\int_\Delta\frac{f(\zeta)-P_{k}(\zeta,z)}{(\zeta-z)^{k+2}}d\bar\zeta\wedge d\zeta.$$
Note the case $k=0$ was defined in [NW]. The following is well-known and classical(see NW], [V]).
\begin{lem}
Let $f\in C^\alpha(D)$. Then $Tf\in C^{1+\alpha}(D), ^2Tf\in C^\alpha(D)$. Moreover
$$\bar\partial Tf=f,\mbox{  }\partial Tf=^2Tf,$$
$$H_\alpha[^2Tf]\leq C_0 H_\alpha[f],$$
where $C_0=\frac{12}{\alpha(1-\alpha)}$.
If $f\in C^{k+\alpha}(D)(k\geq 0)$, then $Tf\in f\in C^{k+1+\alpha}(D)$.
\end{lem}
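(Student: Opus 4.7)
The plan is to establish the three facts $\bar\partial Tf = f$, $\partial Tf = {}^2Tf$, and $H_\alpha[{}^2Tf] \leq C_0 H_\alpha[f]$ first for $k=0$, and then deduce the higher-regularity statement by induction on $k$. A preliminary observation is that ${}^2Tf$ is well defined on $C^\alpha(D)$: the estimate $|f(\zeta)-f(z)|\leq H_\alpha[f]|\zeta-z|^\alpha$ converts the non-integrable singularity of $(\zeta-z)^{-2}$ into the integrable $|\zeta-z|^{\alpha-2}$ in polar coordinates centered at $z$.

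For the identity $\bar\partial Tf = f$ on $\mathrm{Int}(D)$, I would exploit that $\frac{1}{\pi(\zeta-z)}$ is the fundamental solution of $\bar\partial$. Concretely, fix $z_0\in\mathrm{Int}(D)$ and split $f=f_1+f_2$, where $f_1$ is supported in a small disk about $z_0$ and $f_2$ vanishes near $z_0$; then $Tf_2$ is holomorphic near $z_0$, while $\bar\partial Tf_1(z_0)=f_1(z_0)$ follows by approximating $f_1$ by $C^1$ functions and invoking the classical Cauchy--Pompeiu formula (the approximation is justified by uniform bounds on $T$). For $\partial Tf = {}^2Tf$ I would regularize the kernel: using the standard evaluation $\frac{-1}{2\pi i}\int_D\frac{d\bar\zeta\wedge d\zeta}{\zeta-z}=\bar z$ for $z\in D$, write $Tf(z)=\frac{-1}{2\pi i}\int_D\frac{f(\zeta)-f(z)}{\zeta-z}\,d\bar\zeta\wedge d\zeta+f(z)\bar z$. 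The first piece is now integrable; differentiating it in $z$ yields ${}^2Tf(z)$ plus correction terms that cancel against $\partial(f(z)\bar z)$.

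The technical core, and the step I expect to be the main obstacle, is the H\"older estimate with the precise constant $C_0=12/(\alpha(1-\alpha))$. Fix distinct $z,z'\in D$, set $\delta=|z-z'|$, and split $D=D_1\cup D_2$ with $D_1=D\cap\{\zeta:|\zeta-z|\leq 2\delta\}$. On $D_1$ I would estimate each of the two pieces in ${}^2Tf(z)-{}^2Tf(z')$ separately, controlling each by $H_\alpha[f]\int_{D_1}|\zeta-w|^{\alpha-2}\,dA$ for $w\in\{z,z'\}$; passing to polar coordinates and integrating $r^{\alpha-1}\,dr$ contributes the factor $1/\alpha$. On $D_2$, apply the mean value theorem to $(\zeta-z)^{-2}-(\zeta-z')^{-2}$ to pull out a factor $\delta$ and bound the integrand by $c\,\delta\,H_\alpha[f]\,|\zeta-z|^{\alpha-3}$; the radial integral from $2\delta$ outwards contributes the factor $1/(1-\alpha)$. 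Summing the pieces and carefully tracking numerical constants gives exactly $C_0=12/(\alpha(1-\alpha))$. Combining the three facts immediately yields $Tf\in C^{1+\alpha}(D)$ and ${}^2Tf\in C^\alpha(D)$ when $f\in C^\alpha(D)$.

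For $f\in C^{k+\alpha}(D)$ with $k\geq 1$ I would argue by induction on $k$. Extend $f$ to $\tilde f\in C^{k+\alpha}(\co)$ with compact support (this is standard for H\"older spaces on a disk); the translation invariance of the Cauchy kernel (via the substitution $\zeta=z+w$) lets derivatives in $z$ pass under the integral, so $\partial^j\bar\partial^l T\tilde f = T(\partial^j\bar\partial^l \tilde f)$ for $j+l\leq k$. Applying the base case to each top-order derivative $\partial^j\bar\partial^l \tilde f\in C^\alpha$ shows $T\tilde f\in C^{k+1+\alpha}$. The difference $T\tilde f-Tf$ is an integral over $\co\setminus D$ whose kernel $(\zeta-z)^{-1}$ is $C^\infty$ in $z\in\mathrm{Int}(D)$, so $Tf\in C^{k+1+\alpha}$ on $\mathrm{Int}(D)$; a standard boundary regularity argument extends the conclusion to the closed disk $D$, completing the induction.
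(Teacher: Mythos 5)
The paper does not prove this lemma; it is stated with the remark ``The following is well-known and classical (see [NW], [V]),'' and the author cites Nijenhuis--Woolf and Vekua. So there is no ``paper's own proof'' in the strict sense, though the paper's own Theorem 3.7 (the extension to $^{k+2}T$) is proved by exactly the classical technique: a direct total-differential (difference-quotient) argument with a near/far decomposition of $D$ around the two base points. Your H\"older estimate for $^2Tf$ is in the same spirit and is the right strategy; the precise constant $12/(\alpha(1-\alpha))$ does arise from exactly the $1/\alpha$ contribution of the near region and the $1/(1-\alpha)$ contribution of the far region, so that part of your plan is sound.

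However, your argument for $\partial Tf = {}^2Tf$ has a real gap. After writing $Tf(z) = \frac{-1}{2\pi i}\int_D \frac{f(\zeta)-f(z)}{\zeta - z}\,d\bar\zeta\wedge d\zeta + f(z)\bar z$, you propose to ``differentiate in $z$'' each piece and observe that the resulting terms ``cancel against $\partial(f(z)\bar z)$.'' But $f$ is only $C^\alpha$, so $\partial f(z)$ does not exist; the cancellation you describe is between two quantities that are both undefined. (The cancellation is genuine when $f\in C^1$, but you cannot get the $C^\alpha$ case by simply approximating by $C^1$ functions: the approximating derivatives $\partial f_n$ need not converge or even stay bounded.) The correct way to exploit this decomposition is at the level of difference quotients: compute $Tf(z')-Tf(z)$ using $\frac{1}{\zeta-z'}-\frac{1}{\zeta-z}=\frac{z'-z}{(\zeta-z')(\zeta-z)}$ and the identity $T(1)(z')-T(1)(z)=\bar z'-\bar z$; the term multiplying $\bar z'-\bar z$ is exactly $f(z)$ with no $\partial f$ appearing, and the term multiplying $z'-z$ converges to $^2Tf(z)$ by a near/far split, yielding total differentiability with $A={}^2Tf(z)$, $B=f(z)$. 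This is precisely the argument [NW] use (and it is mirrored in the paper's own proof of the higher-order statement). Secondly, the induction step for $k\geq 1$ by extension of $f$ to $\tilde f\in C^{k+\alpha}_c(\mathbb{C})$ is not as clean as you suggest: the correction $T\tilde f - Tf = T(\tilde f\,\chi_{\mathbb{C}\setminus D})$ is holomorphic in $\mathrm{Int}(D)$ but its derivatives may degenerate as $z\to\partial D$ because the distance from $z$ to $\mathrm{supp}(\tilde f\,\chi_{\mathbb{C}\setminus D})$ tends to zero; ``a standard boundary regularity argument'' is not self-explanatory here. A more direct route to the higher-order regularity, consistent with the paper's own machinery, is to iterate the identities $\bar\partial Tf = f$ and $\partial Tf = {}^2Tf = T(\partial f) - S_b(f)$ (the paper's Lemma 3.10), the latter valid once $f\in C^{1+\alpha}$; this transfers regularity of $f$ to regularity of $Tf$ order by order without any extension or boundary argument.
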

We will extend this result to the operator $^{k+2}T$. In order to simplify proofs later, we need several lemmas.
\begin{lem}
If $\varphi(\zeta) $ is holomorphic in $\triangle$, then
$$\int_\triangle\frac{\overline{\phi(\zeta)}}{\zeta-w}d\bar\zeta\wedge d\zeta$$
is anti-holomorphic in $w\in \mathrm{Int}(\triangle)$, where $\triangle=\{|\zeta-z_0|\leq r\}.$
\end{lem}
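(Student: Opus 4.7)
The plan is to reduce the statement to Lemma 3.1 applied on the disk $\triangle$ for a natural anti-holomorphic primitive of $\overline{\phi}$, and then to show that the resulting Cauchy boundary integral is constant in $w$.

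Since $\triangle$ is simply connected and $\phi$ is holomorphic there, I would pick a holomorphic primitive $\Phi$ of $\phi$ on $\triangle$ (so $\partial\Phi=\phi$) and set $\psi(\zeta):=\overline{\Phi(\zeta)}$. Then $\psi$ is anti-holomorphic on $\triangle$, and a direct computation gives $\bar\partial\psi(\zeta)=\overline{\partial\Phi(\zeta)}=\overline{\phi(\zeta)}$. Applying Lemma 3.1 on the disk $\triangle$ (in place of $D$) to the $C^1$ function $\psi$ yields $T_\triangle\bar\partial\psi=\psi-S_\triangle\psi$ on $\mathrm{Int}(\triangle)$, which unwinds to
$$\int_\triangle\frac{\overline{\phi(\zeta)}}{\zeta-w}\,d\bar\zeta\wedge d\zeta=-2\pi i\bigl(\overline{\Phi(w)}-S_\triangle\psi(w)\bigr).$$
Since $\overline{\Phi(w)}$ is already anti-holomorphic in $w$, the conclusion will follow once I show that $S_\triangle\psi(w)$ is constant on $\mathrm{Int}(\triangle)$.

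To establish this constancy, I would expand $\Phi(\zeta)=\sum_{n\ge 0}c_n(\zeta-z_0)^n$, which converges on a neighborhood of the closed disk $\triangle$. On the boundary circle $\partial\triangle=\{|\zeta-z_0|=r\}$ one has the reflection identity $\bar\zeta-\bar{z_0}=r^2/(\zeta-z_0)$, and so
$$\psi(\zeta)=\sum_{n=0}^{\infty}\bar{c_n}(\bar\zeta-\bar{z_0})^n=\sum_{n=0}^{\infty}\frac{\bar{c_n}\,r^{2n}}{(\zeta-z_0)^n}\quad\text{on }\partial\triangle.$$
Substituting this series into $S_\triangle\psi(w)=\frac{1}{2\pi i}\int_{\partial\triangle}\psi(\zeta)(\zeta-w)^{-1}d\zeta$ and integrating term by term (legitimate by uniform convergence on the circle), a standard residue computation shows that for every $n\ge 1$ the residue $(w-z_0)^{-n}$ at the simple pole $\zeta=w$ and the residue $-(w-z_0)^{-n}$ at the pole of order $n$ at $\zeta=z_0$ cancel. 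Hence only the $n=0$ term survives, giving $S_\triangle\psi(w)=\bar{c_0}=\overline{\Phi(z_0)}$. Consequently
$$\int_\triangle\frac{\overline{\phi(\zeta)}}{\zeta-w}\,d\bar\zeta\wedge d\zeta=-2\pi i\bigl(\overline{\Phi(w)}-\overline{\Phi(z_0)}\bigr),$$
which is manifestly anti-holomorphic in $w\in\mathrm{Int}(\triangle)$.

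The principal technical point is the residue cancellation that forces $S_\triangle\psi$ to be constant; once that identity is in hand, combining it with Lemma 3.1 makes the anti-holomorphicity immediate, and no further delicate estimates are required.
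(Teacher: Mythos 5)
Your proof is correct and in essence the same as the paper's: both reduce to Lemma 3.1 and then kill the Cauchy boundary term via the circle reflection $\bar\zeta-\bar z_0=r^2/(\zeta-z_0)$ together with a residue cancellation. The only organizational difference is that you take a global holomorphic primitive $\Phi$ and apply Lemma 3.1 once to $\psi=\overline\Phi$, whereas the paper expands $\phi$ in a power series and applies the same two steps monomial by monomial (using $(\bar\zeta-\bar z_0)^l=\tfrac{1}{l+1}\bar\partial(\bar\zeta-\bar z_0)^{l+1}$ as the term-wise primitive); the residue computation showing the degree-$\ge 1$ boundary contributions vanish is identical in both.
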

\begin{proof} Writing $\phi(\zeta)=\sum a_l(\zeta-z_0)^l$, we only have to prove the lemma for $\phi(\zeta)=(\zeta-z_0)^l$. Indeed,
\begin{eqnarray}
\int_{\triangle} \frac{(\bar\zeta-\bar z_0)^l}{\zeta-w}d\bar\zeta\wedge d\zeta &=&{\frac{-2\pi i}{(l+1)}}T_{\triangle}(\bar\partial (\bar\zeta-\bar z_0)^{l+1})(w)\nonumber\\
&=&{\frac{-2\pi i}{(l+1)}}\{(\bar w-\bar z_0)^{l+1}-S_\triangle((\bar\zeta-\bar z_0)^{l+1})(w)\},\nonumber
\end{eqnarray}
where we have
\begin{eqnarray}
S_\triangle((\bar\zeta -\bar z_0)^{l+1})(w) &=& \frac{1}{2\pi i}\int_{|\zeta-z_0|=r}\frac{(\bar\zeta-\bar z_0)^{l+1}}{\zeta-w}d\zeta\nonumber\\
&=& \frac{r^{2(l+1)}}{2\pi i}\int_{|\zeta-z_0|=r}\frac{1}{(\zeta-z_0)^{l+1}(\zeta-w)}d\zeta\nonumber\\
&=&0,\nonumber
\end{eqnarray}
where in the last equality, we have used the residue theorem to get zero integral.
\end{proof}
\begin{cor}It holds for $l\geq 0$,
$$\int_{\triangle} \frac{(\bar\zeta-\bar z_0)^l}{\zeta-w}d\bar\zeta\wedge d\zeta={\frac{-2\pi i}{(l+1)}}(\bar w-\bar z_0)^{l+1}$$
where $\triangle=\{|\zeta-z_0|\leq r\}.$
\end{cor}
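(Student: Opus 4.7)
The plan is to recognize $(\bar\zeta-\bar z_0)^l$ as the $\bar\partial$-derivative of $\tfrac{1}{l+1}(\bar\zeta-\bar z_0)^{l+1}$ and then apply the basic identity from Lemma 3.1 on the disk $\triangle=\{|\zeta-z_0|\le r\}$. Setting $f(\zeta)=\tfrac{1}{l+1}(\bar\zeta-\bar z_0)^{l+1}$, one has $\bar\partial f=(\bar\zeta-\bar z_0)^l$, so the formula $T_\triangle\bar\partial f=f-S_\triangle f$ on $\mathrm{Int}(\triangle)$ reads
$$\frac{-1}{2\pi i}\int_\triangle\frac{(\bar\zeta-\bar z_0)^l}{\zeta-w}\,d\bar\zeta\wedge d\zeta \;=\; \frac{(\bar w-\bar z_0)^{l+1}}{l+1}\;-\;S_\triangle f(w).$$
Multiplying through by $-2\pi i$ and rearranging, the corollary reduces entirely to showing that the Cauchy boundary term $S_\triangle f(w)$ is zero for every $w\in\mathrm{Int}(\triangle)$.

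For that vanishing, I would exploit the defining property of the circle: on $|\zeta-z_0|=r$ one has $(\bar\zeta-\bar z_0)(\zeta-z_0)=r^2$, hence $(\bar\zeta-\bar z_0)^{l+1}=r^{2(l+1)}/(\zeta-z_0)^{l+1}$. Substituting into $S_\triangle f$ turns the boundary integral into a purely rational contour integral,
$$S_\triangle f(w) \;=\; \frac{r^{2(l+1)}}{2\pi i\,(l+1)}\int_{|\zeta-z_0|=r}\frac{d\zeta}{(\zeta-z_0)^{l+1}(\zeta-w)}.$$
The integrand is a rational function with all its finite poles (at $\zeta=z_0$ and $\zeta=w$) inside the contour, and it decays like $|\zeta|^{-(l+2)}$ at infinity. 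Deforming the contour outward to a circle of arbitrarily large radius forces the integral to vanish; equivalently, one can compute the two residues directly and check they cancel. Either way, $S_\triangle f(w)=0$, and combining with the displayed identity above yields the claimed value $-\tfrac{2\pi i}{l+1}(\bar w-\bar z_0)^{l+1}$.

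The only substantive step is the vanishing of $S_\triangle f(w)$, and the contour-at-infinity observation disposes of it in one line, so I do not anticipate any real obstacle; the rest is essentially bookkeeping already implicit in the proof of Lemma 3.5.
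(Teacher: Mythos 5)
Your proposal is correct and matches the paper's argument essentially verbatim: the corollary is just the computation inside the proof of Lemma~3.5, which applies $T_\triangle\bar\partial f = f - S_\triangle f$ to $f=\tfrac{1}{l+1}(\bar\zeta-\bar z_0)^{l+1}$, rewrites $(\bar\zeta-\bar z_0)^{l+1}=r^{2(l+1)}/(\zeta-z_0)^{l+1}$ on the circle, and kills the boundary term by the residue theorem. Your contour-at-infinity observation is an equivalent way to see the vanishing, but the structure of the proof is the same.
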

\begin{lem}
If $l\geq 1$, then
$$\int_\triangle \frac{(\bar\zeta-\bar z)^l}{(\zeta-z)^{l+1}}d\bar\zeta\wedge d\zeta=0$$
for $z\in \mathrm{Int(\triangle)}.$
\end{lem}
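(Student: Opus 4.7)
The strategy is to use Stokes' theorem to convert the area integral to a contour integral along $\partial\triangle$, and then evaluate that contour integral with the residue theorem, exploiting the boundary reflection identity $\bar\zeta-\bar z_0=r^2/(\zeta-z_0)$ valid on $\partial\triangle$.

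First I observe that the integrand is the $\bar\partial_\zeta$-derivative of the explicit primitive
$$F(\zeta):=\frac{(\bar\zeta-\bar z)^{l+1}}{(l+1)(\zeta-z)^{l+1}}.$$
Applying Stokes' theorem on the annular region $\triangle\setminus\{|\zeta-z|\leq\epsilon\}$, with $\epsilon>0$ small enough that this disk lies in $\mathrm{Int}(\triangle)$, yields
$$\int_{\triangle\setminus\{|\zeta-z|\leq\epsilon\}}\frac{(\bar\zeta-\bar z)^l}{(\zeta-z)^{l+1}}\,d\bar\zeta\wedge d\zeta = \oint_{\partial\triangle}F\,d\zeta - \oint_{|\zeta-z|=\epsilon}F\,d\zeta.$$
Parameterizing $\zeta=z+\epsilon e^{i\theta}$ on the small circle gives $\oint_{|\zeta-z|=\epsilon}F\,d\zeta=\frac{i\epsilon}{l+1}\int_0^{2\pi}e^{-i(2l+1)\theta}\,d\theta=0$. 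Since the integrand $(\bar\zeta-\bar z)^l/(\zeta-z)^{l+1}$ has size $1/|\zeta-z|$ near $z$ and is therefore locally integrable, letting $\epsilon\to 0$ shows that the integral in question equals $\oint_{\partial\triangle}F\,d\zeta$.

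To evaluate this boundary integral, I use the identity $\bar\zeta-\bar z_0=r^2/(\zeta-z_0)$ on $\partial\triangle$ to rewrite
$$\bar\zeta-\bar z=\frac{g(\zeta)}{\zeta-z_0},\qquad g(\zeta):=r^2-(\bar z-\bar z_0)(\zeta-z_0),$$
so that on $\partial\triangle$ the integrand becomes a rational function of $\zeta$:
$$F(\zeta)=\frac{g(\zeta)^{l+1}}{(l+1)(\zeta-z_0)^{l+1}(\zeta-z)^{l+1}}.$$
Its only finite poles lie at $z_0$ and $z$, both contained in $\mathrm{Int}(\triangle)$. Because $g$ has degree one in $\zeta$, $F(\zeta)$ decays like $1/|\zeta|^{l+1}$ at infinity; for $l\geq 1$ this is at least quadratic decay, so the residue of $F$ at $\infty$ is zero. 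The global residue theorem on the Riemann sphere then forces the sum of the residues of $F$ at $z_0$ and $z$ to vanish as well, which gives $\oint_{\partial\triangle}F\,d\zeta=0$ and finishes the proof.

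The potential obstacle is computational rather than conceptual: computing the residues of $F$ at $z_0$ and at $z$ individually would be tedious since both are poles of order $l+1$, but routing the argument through the residue at infinity bypasses this entirely. An alternative would be to perform a Möbius substitution $\zeta=z_0+r\phi(w)$ with $\phi(0)=(z-z_0)/r$, reducing the claim to a pure polar computation in $w$ on the unit disk, but the residue-at-infinity route is cleaner and fits the style already used in Lemma 3.5 and Corollary 3.6.
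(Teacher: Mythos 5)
Your proof is correct, and it takes a genuinely different route from the paper's. The paper exploits the identity $\frac{(\bar\zeta-\bar z)^l}{(\zeta-z)^{l+1}}=\frac{1}{l!}\frac{d^l}{dw^l}\big|_{w=z}\frac{(\bar\zeta-\bar z)^l}{\zeta-w}$: after cutting out a small disk $\epsilon(z)$ around the singularity, the outer integral becomes the $l$-th holomorphic $w$-derivative (evaluated at $w=z$) of a difference of two expressions $\int_\triangle\frac{(\bar\zeta-\bar z)^l}{\zeta-w}d\bar\zeta\wedge d\zeta$ and $\int_{\epsilon(z)}\frac{(\bar\zeta-\bar z)^l}{\zeta-w}d\bar\zeta\wedge d\zeta$, each of which is anti-holomorphic in $w$ by Lemma 3.5; hence the $l$-th $\partial_w$-derivative vanishes for $l\geq1$, and the inner integral tends to $0$ with $\epsilon$. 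Your argument instead identifies an explicit $\bar\partial$-primitive $F$ and moves the whole computation to $\partial\triangle$ by Stokes, then uses the circle reflection identity $\bar\zeta-\bar z_0=r^2/(\zeta-z_0)$ to turn $F$ into a genuine rational function and kills the boundary integral with the residue at infinity. The paper's route is shorter here because it reuses the work already done in Lemma 3.5 and its corollary; your route is more self-contained and makes the role of the hypothesis $l\geq 1$ unusually transparent — it is exactly the degree count $\deg g^{l+1}-\deg\big((\zeta-z_0)^{l+1}(\zeta-z)^{l+1}\big)=-(l+1)\leq -2$ that makes the residue at infinity vanish (and indeed, for $l=0$ that residue is $\bar z-\bar z_0\neq 0$, matching Corollary 3.6). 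Both arguments ultimately rest on the residue theorem; the paper applies it inside the proof of Lemma 3.5, you apply it directly at the last step.
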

\begin{proof}
We notice that $\frac{(\bar\zeta-\bar z)^l}{(\zeta-z)^{l+1}}$ is integrable in $\triangle$ using polar coordinate at $z$. Let $\epsilon(z)$ be the disk of radius $\epsilon$ and center at $z$ so that $\epsilon(z)\subset\triangle$.
Now we have
\begin{eqnarray}
\int_\triangle\frac{(\bar\zeta-\bar z)^l}{(\zeta-z)^{l+1}}d\bar\zeta\wedge d\zeta &=&
\int_{\triangle\setminus \epsilon(z)} \frac{(\bar\zeta-\bar z)^l}{(\zeta-z)^{l+1}}d\bar\zeta\wedge d\zeta+\int_{\epsilon(z)} \frac{(\bar\zeta-\bar z)^l}{(\zeta-z)^{l+1}}d\bar\zeta\wedge d\zeta\nonumber\\
&=&
\frac{1}{l!}\frac{d^l}{dw^l}\int_{\triangle\setminus \epsilon(z)} \frac{(\bar\zeta-\bar z)^l}{\zeta-w}d\bar\zeta\wedge d\zeta\bigg |_{w=z}+\int_{\epsilon(z)} \frac{(\bar\zeta-\bar z)^l}{(\zeta-z)^{l+1}}d\bar\zeta\wedge d\zeta\nonumber\\
&=& I_1|_{w=z}+I_2\nonumber
\end{eqnarray}
We have
$$I_1=\frac{1}{l!}\frac{d^l}{dw^l}\left\{\int_{\triangle} \frac{(\bar\zeta-\bar z)^l}{\zeta-w}d\bar\zeta\wedge d\zeta-\int_{\epsilon(z)} \frac{(\bar\zeta-\bar z)^l}{\zeta-w}d\bar\zeta\wedge d\zeta\right\}$$
where $w\in \epsilon(z)$. By Lemma 3.5, $I_1=0$. $\lim_{\epsilon\to 0}I_2=0$ is obvious. This proof is complete.
\end{proof}
We prove the following, which is one of key tools in the proof of the results
\begin{thm}
Let $f\in C^{k+\alpha}(D) (k\geq 0)$. Then
$$\partial^i\bar\partial^j Tf=\partial^i\bar\partial^{j-1}f,$$
if $j\geq 1, i+j\leq k+1;$ otherwise
$$\partial^{k+1} Tf=^{k+2}Tf.
$$

\end{thm}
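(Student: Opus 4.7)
The plan is to handle the two formulas in the statement separately. For the first, valid when $j \geq 1$ and $i+j \leq k+1$, I would iterate the identity $\bar\partial Tf = f$ from Lemma 3.4. Since $f \in C^{k+\alpha}(D) \subset C^{\alpha}(D)$, Lemma 3.4 gives $Tf \in C^{k+1+\alpha}(D)$, so all partial derivatives of $Tf$ up to order $k+1$ exist classically and commute. Applying $\partial^i\bar\partial^{j-1}$ to both sides of $\bar\partial Tf = f$ then yields $\partial^i\bar\partial^j Tf = \partial^i\bar\partial^{j-1}f$, with the constraint $i+(j-1)\leq k$ (equivalently $i+j \leq k+1$) ensuring that the right hand side is well defined.

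For the second formula $\partial^{k+1}Tf = {}^{k+2}Tf$, I would induct on $k$. The base case $k=0$ is exactly the identity $\partial Tf = {}^2 Tf$ from Lemma 3.4. For the inductive step, the induction hypothesis at level $k-1$ applied to $f \in C^{k+\alpha} \subset C^{k-1+\alpha}$ gives $\partial^k Tf = {}^{k+1}Tf$, hence $\partial^{k+1}Tf = \partial({}^{k+1}Tf)$. It then suffices to prove the operator identity $\partial({}^{k+1}Tf) = {}^{k+2}Tf$ on $C^{k+\alpha}(D)$, which I would obtain by excising a small disk $\epsilon(z) = \{|\zeta - z| \leq \epsilon\}$ around the singularity, differentiating under the integral on $D \setminus \epsilon(z)$, and passing to the limit $\epsilon \to 0$. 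The key decomposition is $f(\zeta) - P_{k-1}(\zeta,z) = (f(\zeta) - P_k(\zeta,z)) + (P_k(\zeta,z) - P_{k-1}(\zeta,z))$: divided by $(\zeta - z)^{k+2}$, the first summand reproduces exactly the integrand of ${}^{k+2}Tf$, while the second summand combines with the ``Taylor-drift'' $-\partial_z P_{k-1}(\zeta,z)/(\zeta - z)^{k+1}$ arising from differentiating the Taylor polynomial in its basepoint to produce explicit monomials of the form $\partial^a\bar\partial^b f(z)(\bar\zeta - \bar z)^b/(\zeta - z)^{b+1}$. The integrals over $D$ of the $b \geq 1$ monomials vanish by Lemma 3.6, and the $b=0$ residuals together with the contributions from the moving circle $\partial\epsilon(z)$ are reconciled via the corollary of Lemma 3.5 and the estimate $|f(\zeta) - P_k(\zeta,z)| \leq C|\zeta - z|^{k+\alpha}$ from Lemma 2.2, which in particular forces the $\partial\epsilon(z)$ contributions to be $O(\epsilon^\alpha) \to 0$.

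The main obstacle will be the algebraic reconciliation in the inductive step: namely, showing that the coefficients produced by $\partial_z P_{k-1}(\zeta,z)$ interlock with those of the missing Taylor monomial $P_k - P_{k-1}$ so that the non-${}^{k+2}Tf$ pieces are annihilated exactly by Lemma 3.6 (together with the boundary/residue cancellations). The hypothesis $f \in C^{k+\alpha}$ is used precisely at the level needed for ${}^{k+2}Tf$ to be a well-defined, absolutely convergent singular integral, so everything is on the edge of integrability and the cancellations must be identified cleanly.
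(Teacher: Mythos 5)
Your treatment of the first formula (iterating $\bar\partial Tf = f$ and differentiating, with regularity from Lemma~3.4) is fine and matches what the paper implicitly uses. The real content is the second formula, and there your route diverges from the paper's. The paper proves $\partial\big({}^{k+1}Tf\big)={}^{k+2}Tf$ by a finite-difference argument: it forms ${}^{k+1}Tf(z)-{}^{k+1}Tf(z')$, introduces a disk $\delta$ of radius $\rho=|z-z'|$ centered at the \emph{midpoint} $(z+z')/2$, and splits the integral into $\delta$ and $D\setminus\delta$. Over $D\setminus\delta$ the entire Taylor polynomial $P_{k-1}$ contribution drops out by Lemma~3.8, so one only tracks $f(\zeta)$; over $\delta$ the decomposition $f-P_{k-1}=(P_k-P_{k-1})+E_k$ together with Lemma~3.6 and Corollary~3.5 isolates a clean $\tfrac12\partial^kf(z)(\bar z-\bar z')$ plus an $O(\rho^{1+\alpha})$ error. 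Because $z$ and $z'$ are symmetric about the center of $\delta$, there is no moving-boundary term to control, and the whole thing collapses to exhibiting the total differential with coefficients $A={}^{k+2}Tf(z)$, $B=\partial^kf(z)$. Your proposal instead differentiates under the integral over $D\setminus\epsilon(z)$ and sends $\epsilon\to0$. In principle this can be made to work, and it does use the same ingredients (the decomposition $f-P_{k-1}=(f-P_k)+(P_k-P_{k-1})$, the vanishing Lemmas~3.6/3.8, and the Hölder bound from Lemma~2.2). But note that Lemma~3.6 is stated for disks, not annuli; what you actually need over $D\setminus\epsilon(z)$ is Lemma~3.8.

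The genuine gap is the part you yourself flagged: you assert, without carrying it out, that the Taylor-drift $-\partial_z P_{k-1}(\zeta,z)/(\zeta-z)^{k+1}$, the leftover $(k+1)(P_k-P_{k-1})/(\zeta-z)^{k+2}$ monomials, and the transport term from the moving circle $\partial\epsilon(z)$ cancel up to $O(\epsilon^\alpha)$. A naive size estimate of the boundary term is not small: on $|\zeta-z|=\epsilon$ one has $|f(\zeta)-P_{k-1}(\zeta,z)|=O(\epsilon^k)$ (since $f\in C^{k+\alpha}$), so the integrand is $O(\epsilon^{-1})$ and the circular line integral is $O(1)$, not $o(1)$. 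The smallness can only come from a cancellation between the boundary transport term and the $b=0$ pieces of the interior integrals, and verifying that cancellation is precisely the nontrivial bookkeeping your sketch postpones. The paper's midpoint-disk trick is designed to sidestep exactly this: the small disk is centered between $z$ and $z'$, so there is no Reynolds/transport term at all, and only bulk integrals remain. Your plan is therefore not wrong, but it has a harder reconciliation step than you suggest, and you have not supplied it; until you do, the proof is incomplete precisely at the point where the paper's construction does its real work.
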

\begin{proof}
For $k=0$, it is Lemma 3.3. Assume $k-1$ is true, i.e., $\partial^{k} Tf=^{k+1}Tf$. To show $k$ is true, it suffices to show that if $f\in C^{k+\alpha}(D)$, then $^{k+1}Tf$ has a total differential, which means we must show that there are numbers $A$ and $B$, depending on $z$, such that
$$^{k+1}Tf(z)-^{k+1}Tf(z')=A(z-z')+B(\bar z-\bar z')+\varepsilon(z,z')\cdot|z-z'|$$
where $\varepsilon (z,z')\to 0$ as $z'\to z$. It is claimed that $A=^{k+2}Tf(z)$ and $B=\partial^k f(z)$. To this end, denote the left side in the above by $I$, and restrict $z'$ so that $\rho=|z-z'|<\frac{1}{2}(R-|z|)$. Let $\delta$ be the disk of radius $\rho$ and center $(z+z')/2$. Notice $\delta\subset D$. Then
$$I=^{k+1}T_\delta f(z)- ^{k+1}T_\delta f(z')-\frac{k!}{2\pi i} \int_{D\setminus\delta}\bigg\{\frac{f(\zeta)-P_{k-1}(\zeta,z)}{(\zeta-z)^{k+1}}-\frac{f(\zeta)-P_{k-1}(\zeta,z')}{(\zeta-z')^{k+1}}\bigg\}d\bar\zeta\wedge d\zeta.$$
We write by Lemma 2.2 that
$$f(\zeta)=P_{k-1}(\zeta, z)+\frac{1}{k!}\sum_{i+j=k}\partial^i\bar\partial^j f(z)(\zeta-z)^i(\bar\zeta-\bar z)^j+E_{k}(\zeta, z)$$
where
$$|E_{k}(\zeta, z)|\leq
\frac{1}{k!}\bigg\{\sum_{i+j=k}H_\alpha[\partial^i\bar\partial^j f]\bigg\}|\zeta-z|^{k+\alpha}.$$

We now consider the terms in order. First we need the following, using Lemma 3.6 and Corollary 3.5,
\begin{eqnarray}
^{k+1}T_\delta f(z)&=&\frac{-k!}{2\pi i}\int_\delta\frac{f(\zeta)-P_{k-1}(\zeta,z)}{(\zeta-z)^{k+1}}d\bar\zeta\wedge d\zeta\nonumber\\
&=&\frac{-1}{2\pi i }\sum_{i+j=k}\partial^i\bar\partial^j f(z)\int_\delta\frac{(\zeta-z)^i(\bar\zeta-\bar z)^j}{(\zeta-z)^{k+1}}d\bar\zeta\wedge d\zeta\nonumber\\
&+&\frac{-k!}{2\pi i}\int_\delta\frac{E_{k}(\zeta,z)}{(\zeta-z)^{k+1}}d\bar\zeta\wedge d\zeta\nonumber\\
&=&\partial^k f(z)\frac{-1}{2\pi i}\int_\delta\frac{1}{(\zeta-z)}d\bar\zeta\wedge d\zeta+\frac{-k}{2\pi i}\int_\delta\frac{E_{k}(\zeta,z)}{(\zeta-z)^{k+1}}d\bar\zeta\wedge d\zeta\nonumber\\
&=&\partial^k f(z)(\bar z-\frac{\bar z+\bar z'}{2})+\frac{-k}{2\pi i}\int_\delta\frac{E_{k}(\zeta,z)}{(\zeta-z)^{k+1}}d\bar\zeta\wedge d\zeta\nonumber\\
&=&\frac{1}{2}\partial^k f(z)(\bar z-\bar z')+I_3.\nonumber
\end{eqnarray}
Similarly we have
\begin{eqnarray}
^{k+1}T_\delta f(z')&=&\frac{1}{2}\partial^k f(z')(\bar z'-\bar z)+\frac{-k}{2\pi i}\int_\delta\frac{E_{k}(\zeta,z')}{(\zeta-z')^{k+1}}d\bar\zeta\wedge d\zeta\nonumber\\
&=&\frac{1}{2}\partial^k f(z')(\bar z'-\bar z)+I_3'.\nonumber
\end{eqnarray}
Now we estimate $I_3, I_3'$.
$$|I_3|=\bigg|\frac{-k}{2\pi i}\int_\delta\frac{E_{k}(\zeta,z)}{(\zeta-z)^{k+1}}d\bar\zeta\wedge d\zeta\bigg|\leq \frac{1}{2\pi (k-1)!}\sum_{i+j=k}H_\alpha[\partial^i\bar\partial^j f]\int_0^{2\pi}\int_0^{r_1(\theta)}\frac{r^{k+\alpha}2rdrd\theta}{r^{k+1}}$$
where $r_1(\theta)$ is the distance from $z$ to the boundary of $\delta$ in direction $\theta$; note $r_1(\theta)\leq2\rho$. Continuing the computation, we have
$$|I_3|\leq \frac{2 }{(k-1)!}\sum_{i+j=k}H_\alpha[\partial^i\bar\partial^j f]\frac{(2\rho)^{1+\alpha}}{1+\alpha}$$
which approaches $0$ as $\rho\to 0$. The same estimate holds true for $I_3'$. Thus we have
\begin{eqnarray}
&&|^{k+1}T_\delta f(z)- ^{k+1}T_\delta f(z')-\partial^kf(z)(\bar z-\bar z')|\nonumber\\
&\leq& |I_3|+|I_3'|+\bigg|\frac{1}{2}\partial^k f(z)(\bar z-\bar z')-\frac{1}{2}\partial^k f(z')(\bar z'-\bar z)-\partial^kf(z)(\bar z-\bar z')\bigg|\nonumber\\
&\leq& |I_3|+|I_3'|+\frac{1}{2}H_\alpha[\partial^k f]|z-z'|^{1+\alpha}.\nonumber
\end{eqnarray}
On the other hand, to continue to estimate, we have
\begin{eqnarray}
&&\int_{D\setminus\delta}\bigg\{\frac{f(\zeta)-P_{k-1}(\zeta,z)}{(\zeta-z)^{k+1}}-\frac{f(\zeta)-P_{k-1}(\zeta,z')}{(\zeta-z')^{k+1}}\bigg\}d\bar\zeta\wedge d\zeta \nonumber\\
&=&\int_{D\setminus\delta}f(\zeta)\bigg(\frac{1}{(\zeta-z)^{k+1}}-\frac{1}{(\zeta-z')^{k+1}}\bigg)d\bar\zeta\wedge d\zeta-
\int_{D\setminus\delta}\frac{P_{k-1}(\zeta, z)}{(\zeta-z)^{k+1}}d\bar\zeta\wedge d\zeta \nonumber\\
&+&\int_{D\setminus\delta}\frac{P_{k-1}(\zeta, z')}{(\zeta-z')^{k+1}}d\bar\zeta\wedge d\zeta\nonumber\\
&=& I_4+I_5+I_6.\nonumber
\end{eqnarray}
We need the following lemma to proceed.
\begin{lem}
If $m\geq 2, n\geq 0$, then
$$\int_{D\setminus\delta}\frac{(\bar\zeta-\bar z)^n}{(\zeta-z)^m}d\bar\zeta\wedge d\zeta=\int_{D\setminus\delta}\frac{(\bar\zeta-\bar z')^n}{(\zeta-z')^m}d\bar\zeta\wedge d\zeta=0.$$
Actually more is true.
$$\int_{D\setminus\delta}\frac{(\bar\zeta-\bar s)^n}{(\zeta-s)^m}d\bar\zeta\wedge d\zeta=0$$
for any $s\in\delta$.
\end{lem}
\begin{proof}
$$\int_{D\setminus\delta}\frac{(\bar\zeta-\bar z)^n}{(\zeta-z)^m}d\bar\zeta\wedge d\zeta=\frac{1}{(m-1)!}\frac{d^{m-1}}{dw^{m-1}}
\int_{D\setminus\delta}\frac{(\bar\zeta-\bar z)^n}{\zeta-w}d\bar\zeta\wedge d\zeta\bigg|_{w=z},$$
which is zero by applying Lemma 3.5.
\end{proof}
By Lemma 3.8, we have
$$I_5=I_6=0.$$
Now we only have to estimate $I_4$. Indeed,
$$I_4=-(k+1)\int_{z'}^z\bigg(\int_{D\setminus\delta}\frac{f(\zeta)}{(\zeta-w)^{k+2}}d\bar\zeta\wedge d\zeta\bigg) dw$$
where the integration with respect to $w$ is along the straight line segment from $z'$ to $z$. Now it suffices to estimate
\begin{eqnarray}
&&\int_{D\setminus\delta}\frac{f(\zeta)}{(\zeta-w)^{k+2}}d\bar\zeta\wedge d\zeta-\int_{D}\frac{f(\zeta)-P_k(\zeta, z)}{(\zeta-z)^{k+2}}d\bar\zeta\wedge d\zeta\nonumber\\
&=&\int_{D\setminus\delta}\frac{f(\zeta)}{(\zeta-w)^{k+2}}d\bar\zeta\wedge d\zeta-
\int_{D\setminus\delta}\frac{f(\zeta)-P_k(\zeta, z)}{(\zeta-z)^{k+2}}d\bar\zeta\wedge d\zeta\nonumber\\
&+&\int_{\delta}\frac{f(\zeta)-P_k(\zeta, z)}{(\zeta-z)^{k+2}}d\bar\zeta\wedge d\zeta\nonumber\\
&=&\int_{D\setminus\delta}f(\zeta)\bigg\{\frac{1}{(\zeta-w)^{k+2}}-\frac{1}{(\zeta-z)^{k+2}}\bigg\}d\bar\zeta\wedge d\zeta+\int_{\delta}\frac{f(\zeta)-P_k(\zeta, z)}{(\zeta-z)^{k+2}}d\bar\zeta\wedge d\zeta\nonumber\\
&=&I_7+I_8.\nonumber
\end{eqnarray}
Here we have used Lemma 3.8 to conclude
$$\int_{D\\delta}\frac{f(\zeta)-P_k(\zeta, z)}{(\zeta-z)^{k+2}}d\bar\zeta\wedge d\zeta=0.$$
Now we have
\begin{eqnarray}
I_7&=&\int_{D\setminus\delta}f(\zeta)\bigg\{\frac{1}{(\zeta-w)^{k+2}}-\frac{1}{(\zeta-z)^{k+2}}\bigg\}d\bar\zeta\wedge d\zeta\nonumber\\
&=&(k+2)\int_{D\setminus\delta}f(\zeta)\int_w^z\frac{ds}{(\zeta-s)^{k+3}}d\bar\zeta\wedge d\zeta\nonumber\\
&=&(k+2)\int_w^z ds \int_{D\setminus\delta}\frac{f(\zeta)-P_{k}(\zeta, s)}{(\zeta-s)^{k+3}}d\bar\zeta\wedge d\zeta\nonumber
\end{eqnarray}
where we have used that
$$\int_{D\setminus\delta}\frac{P_{k}(\zeta, s)}{(\zeta-s)^{k+3}}d\bar\zeta\wedge d\zeta=0,$$
for $s\in\delta$ by Lemma 3.8. Now
\begin{eqnarray}
|I_7|&\leq& (k+2)|w-z|\bigg |\int_{D\setminus\delta}\frac{f(\zeta)-P_{k}(\zeta, s)}{(\zeta-s)^{k+3}}d\bar\zeta\wedge d\zeta\bigg|\nonumber\\
&\leq& \frac{k+2}{k!}|w-z|\sum_{i+j=k}H_\alpha[\partial^i\bar\partial^j f]2\pi\int_{\rho/2}^{2R}\frac{r^{k+\alpha}2rdr}{r^{k+3}}\nonumber\\
&=&2\frac{k+2}{k!}|w-z|\sum_{i+j=k}H_\alpha[\partial^i\bar\partial^j f]2\pi \frac{1}{1-\alpha}((\rho/2)^{\alpha-1}-(2R)^{\alpha-1})\nonumber\\
&\leq& \frac{4\pi(k+2)}{(1-\alpha)k!}\sum_{i+j=k}H_\alpha[\partial^i\bar\partial^j f]\rho^\alpha.\nonumber
\end{eqnarray}
In the last inequality we have used that $|w-z|\leq |z-z'|=\rho.$
We have thus shown that
$$^{k+1}Tf(z)-^{k+1}Tf(z')=^{k+2}Tf(z)(z-z')+\partial^k f(z)(\bar z-\bar z')+O(|z-z'|^{1+\alpha})$$
as $z'\to z$. Thus $^{k+1}Tf$ has a total differential and $\partial (^{k+1}Tf)=^{k+2}Tf$.
\end{proof}
In what follows, we will show that $^{k+2}Tf\in C^\alpha(D)$ if $f\in C^{k+\alpha}(D)$.
\begin{lem}
Let $f\in C^{k+\alpha}(D)$. Then
$$|^{k+2}Tf|\leq C_1(k+1)R^\alpha \sum_{i+j=k}H_\alpha[\partial^i\bar\partial^jf],$$
where $C_1=\frac{2^{\alpha+1}}{\alpha }$.
\end{lem}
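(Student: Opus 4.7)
The plan is to bound $|^{k+2}Tf(z)|$ by applying Lemma~2.2 to the numerator $f(\zeta)-P_k(\zeta,z)$ and then estimating the resulting singular integral in polar coordinates. This is a direct computation, so no real obstacle is expected; the main thing to watch is correct bookkeeping of constants.

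First I would invoke Lemma~2.2 to get the pointwise estimate
\[
|f(\zeta)-P_{k}(\zeta,z)|\leq \frac{1}{k!}\Bigl\{\sum_{i+j=k}H_\alpha[\partial^i\bar\partial^j f]\Bigr\}|\zeta-z|^{k+\alpha}.
\]
Substituting this into the definition
\[
{}^{k+2}Tf(z)=\frac{-(k+1)!}{2\pi i}\int_D\frac{f(\zeta)-P_{k}(\zeta,z)}{(\zeta-z)^{k+2}}d\bar\zeta\wedge d\zeta
\]
and using $|d\bar\zeta\wedge d\zeta|=2\,d\xi\,d\eta$ (with $\zeta=\xi+i\eta$), the factorials collapse as $(k+1)!/k!=k+1$, and we obtain
\[
|{}^{k+2}Tf(z)|\leq \frac{k+1}{\pi}\Bigl\{\sum_{i+j=k}H_\alpha[\partial^i\bar\partial^j f]\Bigr\}\int_D\frac{d\xi\,d\eta}{|\zeta-z|^{2-\alpha}}.
\]

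Next I would estimate the remaining integral. Since $z\in D$ and $D$ has radius $R$, every point of $D$ lies within distance $2R$ of $z$, so $D$ is contained in the disk $\{|\zeta-z|\leq 2R\}$. Switching to polar coordinates centered at $z$,
\[
\int_D\frac{d\xi\,d\eta}{|\zeta-z|^{2-\alpha}}\leq \int_0^{2\pi}\!\!\int_0^{2R}\frac{r\,dr\,d\theta}{r^{2-\alpha}}=2\pi\int_0^{2R}r^{\alpha-1}\,dr=\frac{2\pi(2R)^\alpha}{\alpha}.
\]

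Finally I would combine these two bounds. Multiplying through gives
\[
|{}^{k+2}Tf(z)|\leq \frac{k+1}{\pi}\cdot\frac{2\pi(2R)^\alpha}{\alpha}\sum_{i+j=k}H_\alpha[\partial^i\bar\partial^j f]=\frac{2^{\alpha+1}}{\alpha}(k+1)R^\alpha\sum_{i+j=k}H_\alpha[\partial^i\bar\partial^j f],
\]
which is exactly $C_1(k+1)R^\alpha\sum_{i+j=k}H_\alpha[\partial^i\bar\partial^j f]$ with $C_1=2^{\alpha+1}/\alpha$. Taking the supremum over $z\in D$ yields the claimed bound on $|{}^{k+2}Tf|$.
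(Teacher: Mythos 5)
Your proof is correct and follows essentially the same route as the paper: apply Lemma~2.2 to bound $|f(\zeta)-P_k(\zeta,z)|$, then switch to polar coordinates centered at $z$ (noting $D\subset\{|\zeta-z|\leq 2R\}$) and integrate $r^{\alpha-1}$ out to $2R$, which produces the constant $2^{\alpha+1}/\alpha$. The only cosmetic difference is that you pull the factor $|d\bar\zeta\wedge d\zeta|=2\,d\xi\,d\eta$ out explicitly before going to polar coordinates, whereas the paper carries the $2r\,dr\,d\theta$ inside the integral; the arithmetic is identical.
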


\begin{proof}
We compute, using Lemma 2.2,
\begin{eqnarray}
|^{k+2}Tf(z)|&=&\frac{(k+1)!}{2\pi}\bigg|\int_D\frac{f(\zeta)-P_{k}(\zeta,z)}{(\zeta-z)^{k+2}}d\bar\zeta\wedge d\zeta\bigg|,\nonumber\\
&\leq&\frac{(k+1)!}{2\pi k!}\sum_{i+j=k}H_\alpha[\partial^i\bar\partial^jf]\int_0^{2\pi}\int_0^{2R}\frac{r^{k+\alpha}2rdrd\theta}{r^{k+2}}\nonumber\\
&\leq& \frac{2^{\alpha+1}(k+1)}{\alpha }R^\alpha\sum_{i+j=k}H_\alpha[\partial^i\bar\partial^jf].\nonumber
\end{eqnarray}
\end{proof}
\begin{lem}
If $f\in C^{1+\alpha}(D)$, then
$$^2Tf=T(\partial f)- S_b(f).$$
\end{lem}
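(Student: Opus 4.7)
The idea is to reduce the order-two singularity $(\zeta-z)^{-2}$ appearing in ${}^2Tf$ to an order-one singularity by integration by parts, exploiting the elementary identity $\frac{1}{(\zeta-z)^2}=-\partial_\zeta \frac{1}{\zeta-z}$. Fix $z\in\mathrm{Int}(D)$ and introduce the auxiliary function $g(\zeta)=\frac{f(\zeta)-f(z)}{\zeta-z}$. A direct differentiation gives $\partial_\zeta g(\zeta)=\frac{\partial f(\zeta)}{\zeta-z}-\frac{f(\zeta)-f(z)}{(\zeta-z)^2}$, whence the defining integrand of ${}^2Tf$ decomposes as
$$\frac{f(\zeta)-f(z)}{(\zeta-z)^2}=\frac{\partial f(\zeta)}{\zeta-z}-\partial_\zeta g(\zeta).$$
Multiplying by $-\frac{1}{2\pi i}\,d\bar\zeta\wedge d\zeta$ and integrating over $D$ identifies the first term on the right as $T(\partial f)(z)$, so the lemma reduces to showing
$$\frac{1}{2\pi i}\int_D \partial_\zeta g(\zeta)\,d\bar\zeta\wedge d\zeta = -S_b f(z).$$

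To handle this I would apply Stokes' theorem to the $1$-form $g\,d\bar\zeta$ on the punctured region $D_\epsilon:=D\setminus\{|\zeta-z|\le\epsilon\}$. Since $d(g\,d\bar\zeta)=-\partial_\zeta g\,d\bar\zeta\wedge d\zeta$ in the standard orientation, Stokes gives
$$\int_{D_\epsilon}\partial_\zeta g\,d\bar\zeta\wedge d\zeta=-\int_C g\,d\bar\zeta+\int_{|\zeta-z|=\epsilon}g\,d\bar\zeta.$$
The hypothesis $f\in C^{1+\alpha}$ makes $g$ bounded near $z$, so the inner boundary contribution is $O(\epsilon)$ and drops out as $\epsilon\to 0$; the Taylor expansion supplied by Lemma~2.2 further shows that $\partial_\zeta g$ is absolutely integrable on $D$ (its worst singularity being $O(|\zeta-z|^{\alpha-1})$), justifying passage to the limit. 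Splitting $g$ on $C$,
$$\int_C g\,d\bar\zeta=\int_C\frac{f(\zeta)}{\zeta-z}\,d\bar\zeta-f(z)\int_C\frac{d\bar\zeta}{\zeta-z}=2\pi i\,S_b f(z),$$
where the last integral vanishes: parameterizing $C=\{|\zeta|=R\}$ via $\bar\zeta=R^2/\zeta$ and $d\bar\zeta=-R^2 d\zeta/\zeta^2$ converts it into $-R^2\int_C d\zeta/(\zeta^2(\zeta-z))$, whose residues $-1/z^2$ at $\zeta=0$ and $1/z^2$ at $\zeta=z$ cancel. Combining these computations yields ${}^2Tf=T(\partial f)-S_b f$.

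The principal delicate point is justifying the limiting Stokes' theorem in the presence of the singular integrand $\partial_\zeta g$. This rests on the first-order Taylor expansion of Lemma~2.2, which shows that the singular part of $\partial_\zeta g$ behaves no worse than $|\zeta-z|^{\alpha-1}$ (hence is integrable), together with the $C^\alpha$-type boundedness of $g$ near $z$, which controls the shrinking inner boundary. Once the limit interchange is justified, the remainder is the routine algebra and residue computation above.
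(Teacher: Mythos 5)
Your proof is correct and is essentially the paper's own argument: Stokes' theorem on the punctured disk, with the inner boundary killed by the boundedness of the difference quotient and the $f(z)$ contribution on the outer boundary annihilated by a residue computation; the paper merely performs that residue cancellation before applying Stokes (via its Lemma 3.8) rather than on $C$ afterward. One small imprecision worth fixing: the worst singularity of $\partial_\zeta g$ is $O(|\zeta-z|^{-1})$ (coming from the linear terms $\partial f(z)(\zeta-z)+\bar\partial f(z)(\bar\zeta-\bar z)$ in the Taylor expansion), not $O(|\zeta-z|^{\alpha-1})$ — still absolutely integrable in two real dimensions, so the limiting argument goes through unchanged.
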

\begin{proof}
If $f\in C^{1+\alpha}(D)$, then $^2Tf$ is a proper integral. Therefore,
$$-2\pi i^2Tf(z)=\lim_{\varepsilon\to 0}\int_{D_\varepsilon}\frac{f(\zeta)-f(z)}{(\zeta-z)^2}d\bar\zeta\wedge d\zeta=\lim_{\varepsilon\to 0}\int_{D_\varepsilon}\frac{f(\zeta)}{(\zeta-z)^2}d\bar\zeta\wedge d\zeta,$$
where $D_\varepsilon=D\setminus\{|\zeta-z|<\varepsilon\}$, and we have used $\int_{D_\varepsilon}\frac{f(z)}{(\zeta-z)^2}d\bar\zeta\wedge d\zeta=0$ by Lemma 3.8. Now we can apply Stokes theorem to get
$$\int_{D_\varepsilon}\frac{f(\zeta)}{(\zeta-z)^2}d\bar\zeta\wedge d\zeta=\int_{D_\varepsilon}d(\frac{\partial f(\zeta)}{(\zeta-z)}d\bar\zeta)= \int_{D_\varepsilon}\frac{\partial f(\zeta)}{\zeta-z}d\bar\zeta\wedge d\zeta
-\int_{\partial D_\varepsilon}\frac{f(\zeta)}{\zeta-z}d\bar\zeta.$$
We note that
$$\int_{|\zeta-z|=\varepsilon}\frac{f(\zeta)}{\zeta-z}d\bar\zeta=\int_{|\zeta-z|=\varepsilon}\frac{f(\zeta)-f(z)}{\zeta-z}d\bar\zeta,$$
which converges to $0$ as $\varepsilon\to 0$. Here we have used
$$\int_{|\zeta-z|=\varepsilon}\frac{f(z)}{\zeta-z}d\bar\zeta=-\varepsilon^2\int_{|\zeta-z|=\varepsilon}\frac{f(z)}{\zeta^2(\zeta-z)}d\bar\zeta=0,$$
which  is the result of the residue theorem. Letting $\varepsilon\to 0$, the lemma is proved.
\end{proof}
\begin{lem}
If $f\in C^{k+\alpha}(D)$, then for $k\geq 1$
$$^{k+2}Tf=^2T(\partial^k f)-\sum_{i=1}^k\partial^i S_b(\partial^{k-i}f).$$
\end{lem}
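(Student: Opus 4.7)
The plan is to proceed by induction on $k$, using three tools already established: Lemma 3.3 (so that $\partial T = {}^2T$ on $C^\alpha$), Theorem 3.7 (so that $\partial^{k+1}Tf = {}^{k+2}Tf$ for $f\in C^{k+\alpha}$), and Lemma 3.10 (the case $k=0$, namely ${}^2Tg = T(\partial g) - S_b(g)$ for any $g\in C^{1+\alpha}$). These three identities together say that one extra $\partial$ on a $T$ either raises the order of the kernel or, via Lemma 3.10, reduces an instance of ${}^2T$ to $T(\partial\cdot)$ minus a $S_b$-term; the proposed formula is exactly the bookkeeping of this reduction.

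The base case $k=1$ is immediate. By Theorem 3.7, ${}^3Tf = \partial({}^2Tf)$, and by Lemma 3.10 applied to $f\in C^{1+\alpha}$, ${}^2Tf = T(\partial f) - S_b(f)$. Differentiating and using $\partial T = {}^2T$ from Lemma 3.3 gives
$$^3Tf = \partial T(\partial f) - \partial S_b(f) = {}^2T(\partial f) - \partial S_b(f),$$
which is the stated formula for $k=1$.

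For the inductive step, assume the formula holds for $k-1 \geq 1$, i.e.
$$^{k+1}Tf = {}^2T(\partial^{k-1}f) - \sum_{i=1}^{k-1}\partial^i S_b(\partial^{k-1-i}f).$$
By Theorem 3.7 we have ${}^{k+2}Tf = \partial({}^{k+1}Tf)$, so applying $\partial$ to the inductive hypothesis yields
$$^{k+2}Tf = \partial\bigl({}^2T(\partial^{k-1}f)\bigr) - \sum_{i=1}^{k-1}\partial^{i+1}S_b(\partial^{k-1-i}f).$$
Now apply Lemma 3.10 to $\partial^{k-1}f \in C^{1+\alpha}(D)$: ${}^2T(\partial^{k-1}f) = T(\partial^k f) - S_b(\partial^{k-1}f)$. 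Differentiating and using $\partial T = {}^2T$ once more gives
$$\partial\bigl({}^2T(\partial^{k-1}f)\bigr) = {}^2T(\partial^k f) - \partial S_b(\partial^{k-1}f).$$
Reindexing the remaining sum by $j=i+1$ (so $j$ runs from $2$ to $k$ and $k-1-i = k-j$) and absorbing the isolated $\partial S_b(\partial^{k-1}f)$ as the $j=1$ term, we obtain
$$^{k+2}Tf = {}^2T(\partial^k f) - \sum_{j=1}^{k}\partial^j S_b(\partial^{k-j}f),$$
which is the desired identity.

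The only real subtlety is verifying that each operator application is legitimate at the claimed regularity: $\partial^{k-1}f\in C^{1+\alpha}$ is needed to invoke Lemma 3.10, and $\partial^k f\in C^{\alpha}$ is needed so that $T(\partial^k f)\in C^{1+\alpha}$ and $\partial T(\partial^k f) = {}^2T(\partial^k f)$ makes sense by Lemma 3.3; both are guaranteed by the hypothesis $f\in C^{k+\alpha}(D)$. Beyond that, the argument is pure bookkeeping, which I expect to be the only place where care is needed.
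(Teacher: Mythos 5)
Your proof is correct and follows essentially the same route as the paper's: establish the $k=1$ case by differentiating Lemma 3.10 and invoking $\partial T = {}^2T$ and $\partial\,{}^{k+1}T = {}^{k+2}T$, then conclude by induction. The paper's version leaves the inductive step as ``a mathematical induction finishes the proof,'' which you have simply spelled out explicitly (including the reindexing), so the two proofs agree in substance.
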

\begin{proof}
By Lemma 3.10, we have
$$^2Tf=T(\partial f)- S_b(f).$$
Taking derivative, we have
$$\partial ^2Tf=\partial T(\partial f)- \partial S_b(f),$$
which is equivalent to
$$ ^3Tf=^2T(\partial f)- \partial S_b(f).$$
A mathematical induction finishes the proof.
\end{proof}
\begin{lem}
If $f\in C^{k+\alpha}(D)$, then
$$H_\alpha[\partial^k S_b f]\leq C_2 \frac{R^\alpha}{k+1} \sum_{i+j=k}H_\alpha[\partial^i\bar\partial^j f],$$
where $C_2=\frac{4}{\alpha(1-\alpha)}$.
\end{lem}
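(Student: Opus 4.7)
My plan is to first derive a clean boundary integral representation of $\partial^k S_b f$ with good cancellation, and then estimate the H\"older difference directly.

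Starting from the identity $S_b f=T(\partial f)-{}^2 Tf$ of Lemma 3.10, I apply $\partial^k$ and use the relation $\partial^{j+1}Tg={}^{j+2}Tg$ (Theorem 3.7, applied iteratively) to obtain $\partial^k S_b f={}^{k+1}T(\partial f)-{}^{k+2}Tf$. More usefully, applying Stokes' theorem to the 1-form $(f(\zeta)-P_k(\zeta,z))(\zeta-z)^{-(k+1)}\,d\bar\zeta$ on $D$ with a small disk around $z$ deleted (the small-disk contribution vanishes in the limit by Lemma~2.2, since $f-P_k$ vanishes to order $k+\alpha$ at $z$), and using that $\int_C P_k(\zeta,z)(\zeta-z)^{-(k+1)}\,d\bar\zeta=0$ (a consequence of Lemma~3.8 together with a small-circle residue computation, valid because the monomials $(\zeta-z)^i(\bar\zeta-\bar z)^j$ with $0\leq i\leq k$, $j\geq 0$ have vanishing boundary integral against the kernel $(\zeta-z)^{-(k+1)}$), I obtain the representation
\[\partial^k S_b f(z)=\frac{k!}{2\pi i}\int_C\frac{f(\zeta)-P_k(\zeta,z)}{(\zeta-z)^{k+1}}\,d\bar\zeta.\]

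For the H\"older estimate, I fix $z,z'\in D$ with $\rho=|z-z'|$ and let $w=(z+z')/2$. The same vanishing identity applied to the degree-$k$ polynomial $P_k(\zeta,w)$ gives $\int_C P_k(\zeta,w)(\zeta-z)^{-(k+1)}\,d\bar\zeta=\int_C P_k(\zeta,w)(\zeta-z')^{-(k+1)}\,d\bar\zeta=0$, so substituting and subtracting yields
\[\partial^k S_b f(z)-\partial^k S_b f(z')=\frac{k!}{2\pi i}\int_C (f(\zeta)-P_k(\zeta,w))\Bigl[(\zeta-z)^{-(k+1)}-(\zeta-z')^{-(k+1)}\Bigr]d\bar\zeta.\]
I bound $|f(\zeta)-P_k(\zeta,w)|\leq\tfrac{M}{k!}|\zeta-w|^{k+\alpha}$ by Lemma~2.2, where $M=\sum_{i+j=k}H_\alpha[\partial^i\bar\partial^j f]$, and write the kernel difference via the mean-value identity $A^{-(k+1)}-B^{-(k+1)}=(k+1)(B-A)\int_0^1((1-t)A+tB)^{-(k+2)}\,dt$. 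Splitting $C$ dyadically by the size of $|\zeta-w|$, I estimate each scale. On scales where $|\zeta-w|\gtrsim (k+1)\rho$, the ratio $|\zeta-z_t|/|\zeta-w|$ is essentially $1$, the kernel difference is controlled by $(k+1)\rho|\zeta-w|^{-(k+2)}$, and the integrand reduces to $|\zeta-w|^{\alpha-2}$ whose integral over $C$ is of order $\rho^{\alpha-1}/(1-\alpha)$; on smaller scales the short arc length combines with the $1/k!$ from the Taylor remainder to tame the $k$-dependent kernel estimates. Arranging the constants produces the bound $\tfrac{C_2 R^\alpha}{k+1}M\cdot\rho^\alpha$ with $C_2=\tfrac{4}{\alpha(1-\alpha)}$.

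\textbf{Main obstacle.} The technical heart is the bookkeeping of constants to recover the sharp $1/(k+1)$ prefactor: the naive mean-value bound produces an extra $(k+1)$ coming from differentiating the kernel, and the compensation requires the interplay of the geometric sum over dyadic scales, the factorial gain built into the Taylor remainder via Lemma~2.2, and the $(k+1)\rho$ critical scale at which the two competing bounds for the kernel difference balance. A straightforward application of either the mean-value identity alone or the triangle-inequality bound $|A|^{-(k+1)}+|B|^{-(k+1)}$ alone is insufficient; the sharp bound requires using the better of the two on each dyadic scale.
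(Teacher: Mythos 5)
Your boundary representation $\partial^k S_b f(z)=\frac{k!}{2\pi i}\int_C\frac{f(\zeta)-P_k(\zeta,z)}{(\zeta-z)^{k+1}}\,d\bar\zeta$ and the vanishing identity $\int_C P_k(\zeta,z)(\zeta-z)^{-(k+1)}\,d\bar\zeta=0$ are both correct, and they put you at essentially the same starting point as the paper. The genuine gap is in the estimation. The paper writes the kernel difference as an integral $\int_{z'}^z(\zeta-w)^{-(k+2)}\,dw$, subtracts $P_k(\zeta,w)$ at the \emph{moving} integration variable $w$, and then — this is the decisive move — uses holomorphy of $w\mapsto\int_C f(\zeta)(\zeta-w)^{-(k+2)}\,d\bar\zeta$ to deform the $w$-path from the straight segment onto the circular arc through $z/R$ and $z'/R$ that is orthogonal to the unit circle, followed by the Möbius substitution $\tau=(\eta-\omega)/(1-\bar\omega\eta)$ and the two boundary estimates of [NW](6.2a, p.473): $\int_{|\tau|=1}|d\tau|/|1+\bar\omega\tau|^{\alpha}\leq\frac{4\pi}{1-\alpha}$ and $\int_{z/R}^{z'/R}|d\omega|/(1-\omega\bar\omega)^{1-\alpha}\leq\frac{2}{\alpha}|z-z'|^{\alpha}$. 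Those two inequalities are exactly what absorbs the boundary singularity. Your dyadic argument keeps $w$ fixed at the midpoint, and it does not survive the case $z,z'$ close to (or on) $C$: there $|f(\zeta)-P_k(\zeta,w)|\lesssim|\zeta-w|^{k+\alpha}$ is small only near $\zeta=w$, not near the kernel pole $\zeta=z$, and $\int_C|\zeta-w|^{k+\alpha}|\zeta-z|^{-(k+1)}\,|d\zeta|$ blows up as $z\to C$. No dyadic splitting around the scale $(k+1)\rho$ repairs this; the path deformation to the orthogonal arc (or some equivalent conformal device) is the heart of the proof, not a shortcut around it.

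Your ``main obstacle'' paragraph is also chasing an unattainable constant. After the $k!$ in $\partial^k S_b f=\frac{k!}{2\pi i}\int_C\cdots$ cancels the $1/k!$ from the Taylor remainder of Lemma~2.2, the mean-value identity contributes a factor of \emph{exactly} $(k+1)$: $(\zeta-z)^{-(k+1)}-(\zeta-z')^{-(k+1)}=(k+1)\int_{z'}^{z}(\zeta-w)^{-(k+2)}\,dw$. Nothing in the remaining integrals is $k$-dependent once the Möbius substitution is made, so the final prefactor is $(k+1)$, not $1/(k+1)$. The paper's line ``$=\frac{1}{k+1}\int_C f(\zeta)\int_z^{z'}\frac{dw}{(\zeta-w)^{k+2}}\,d\bar\zeta$'' has the reciprocal in the wrong place (the correct coefficient is $-(k+1)$), and correspondingly the lemma should read $H_\alpha[\partial^k S_b f]\leq C_2(k+1)R^{\alpha}\sum_{i+j=k}H_\alpha[\partial^i\bar\partial^j f]$. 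Your intuition that ``the naive mean-value bound produces an extra $(k+1)$'' was the correct reading of the situation; the hope that it can be compensated by dyadic bookkeeping and the Taylor factorial is where the proposal goes wrong.
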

\begin{proof}
We notice that
$$\partial^k S_b f(z)=\frac{k!}{2\pi i}\int_C\frac{f(\zeta)}{(\zeta-z)^{k+1}}d\bar\zeta,$$
and $S_b f(z)$ is holomorphic in $\mathrm{Int}(D)$.
First we need the following
$$\int_C\frac{P_k(\zeta, w)}{(\zeta-w)^{k+2}}d\bar\zeta=0,$$
for $w\in \mathrm{Int}(D)$, and where $P_k(\zeta, w)$ is the Taylor expansion of $f(\zeta)$ of order $k$ at $w$. Indeed,
$$\int_C\frac{P_k(\zeta, w)}{(\zeta-w)^{k+2}}d\bar\zeta=\sum_{l=0}^k\frac{1}{l!}\sum_{i+j=l}\partial^i\bar\partial^j f(w)\int_C\frac{(\zeta-w)^i(\bar\zeta-\bar w)^j}{(\zeta-w)^{k+2}}d\bar\zeta.$$
It suffices to prove the following
$$\int_C\frac{(\bar\zeta-\bar w)^n}{(\zeta-w)^{m+2}}d\bar\zeta=0$$
for $m,n\geq 0$. This is equivalent to prove that
$$\int_C\frac{\bar\zeta^n}{(\zeta-w)^{m+2}}d\bar\zeta=0$$
for $m,n\geq 0$. This is equivalent to prove that
$$\int_C\frac{1}{\zeta^{n+2}(\zeta-w)^{m+2}}d\zeta=0.$$
We apply Residue theorem to get zero integral. Consider $\phi(\zeta)=\frac{1}{\zeta^{n+2}(\zeta-w)^{m+2}}$, and it has finite order singularity at
$\zeta=0, w$. The residue of $\phi(\zeta)$ at $\zeta=0$ equals
$$\frac{1}{(n+1)!}[(\zeta-w)^{-(m+2)}]^{(n+1)}\bigg|_{\zeta=0},$$
which is
$$\frac{1}{(n+1)!}(-1)^{m}(m+2)(m+3)\cdot\cdot\cdot(m+n+2)w^{-(m+n+2)}.$$
The residue of $\phi(\zeta)$ at $\zeta=w$ if $w\not=0$ equals
$$\frac{1}{(m+1)!}[\zeta^{-(n+2)}]^{(m+1)}\bigg|_{\zeta=w},$$
which is
$$\frac{1}{(m+1)!}(-1)^{m+1}(n+2)(n+3)\cdot\cdot\cdot(m+n+2)w^{-(m+n+2)}.$$
They are opposite so the integral is zero. The case $w=0$ is obvious. Now we are ready to estimate.
\begin{eqnarray}
&&\frac{2\pi i}{k!}(\partial^k S_b f(z)-\partial^k S_b f(z'))\nonumber\\ &=&\int_Cf(\zeta)\bigg[\frac{1}{(\zeta-z)^{k+1}}-\frac{1}{(\zeta-z')^{k+1}}\bigg]d\bar\zeta\nonumber\\
&=&\frac{1}{k+1}\int_Cf(\zeta)\int_z^{z'}\frac{dw}{(\zeta-w)^{k+2}}d\bar\zeta\nonumber\\
&=&\frac{1}{k+1}\int_z^{z'}dw\int_C \frac{f(\zeta)}{(\zeta-w)^{k+2}}d\bar\zeta\nonumber\\
&=&\frac{1}{k+1}\int_z^{z'}dw\int_C \frac{f(\zeta)-P_k(\zeta,w)}{(\zeta-w)^{k+2}}d\bar\zeta.\nonumber
\end{eqnarray}
In order to apply estimates in [NW], we convert integral to be on unit circle. Setting $w=R\omega$ and $\zeta=R\eta$, this becomes
$$\frac{1}{(k+1)R^k}\int_{z/R}^{z'/R}d\omega\int_{|\eta|=1}\frac{f(R\eta)-P_k(R\eta,R\omega)}{(\eta-\omega)^{k+2}}d\bar\eta$$
where the line integral is taken on the shorter segment of the circle through $z/R$ and $z'/R$ and orthogonal to the unit circle(see [NW] (6.2a)).
Further, setting $\tau=(\eta-\omega)/(1-\bar\omega\eta)$, we have
$$\eta=\frac{\tau+\omega}{1+\bar\omega\tau}, \mbox{  }\eta-\omega=\frac{\tau(1-\omega\bar\omega)}{1+\tau\bar\omega},\mbox{  } d\eta=\frac{(1-\omega\bar\omega)}{(1+\tau\bar\omega)^2}d\tau.$$
Hence
\begin{eqnarray}
&\frac{2\pi }{k!}&|\partial^k S_b f(z)-\partial^k S_b f(z') |\nonumber\\
&\leq &\frac{1}{(k+1)R^k}\int_{z/R}^{z'/R}|d\omega|\int_{|\eta|=1}\bigg|\frac{f(R\eta)-P_k(R\eta,R\omega)}{(\eta-\omega)^{k+2}}\bigg||d\bar\eta|\nonumber\\
&\leq& \frac{1}{(k+1)R^k}\int_{z/R}^{z'/R}|d\omega|\int_{|\eta|=1}\frac{|f(R\frac{\tau+\omega}{1+\bar\omega\tau})-P_k(R\frac{\tau+\omega}{1+\bar\omega\tau},R\omega)||1+\tau\bar\omega|^k}{(1-\omega\bar\omega)^{k+1}}|d\eta|\nonumber\\
&\leq& \frac{1}{(k+1)k!R^k}\sum_{i+j=k}H_\alpha[\partial^i\bar\partial^j f]\int_{z/R}^{z'/R}|d\omega|\int_{|\eta|=1}\frac{|R\frac{\tau+\omega}{1+\bar\omega\tau}-R\omega|^{k+\alpha}
|1+\tau\bar\omega|^k}{(1-\omega\bar\omega)^{k+1}}|d\eta|\nonumber\\
&\leq&\frac{R^\alpha}{(k+1)!}\sum_{i+j=k}H_\alpha[\partial^i\bar\partial^j f]\int_{z/R}^{z'/R}\frac{|d\omega|}{(1-\omega\bar\omega)^{1-\alpha}}\int_{|\tau|=1}\frac{|d\tau|}{|1+\bar\omega\tau|^\alpha}.\nonumber
\end{eqnarray}
According to [NW](6.2a and p473), one has
$$\int_{|\tau|=1}\frac{|d\tau|}{|1+\bar\omega\tau|^\alpha}\leq \frac{4\pi}{1-\alpha},$$
$$\int_{z/R}^{z'/R}\frac{|d\omega|}{(1-\omega\bar\omega)^{1-\alpha}}\leq \frac{2}{\alpha}|z-z'|^\alpha.$$
Hence
$$H_\alpha[\partial^k S_bf]\leq \frac{4}{\alpha(1-\alpha)}\frac{R^\alpha}{k+1}\sum_{i+j=k}H_\alpha[\partial^i\bar\partial^j f].$$
\end{proof}
\begin{thm}
If $f\in C^{k+\alpha}(D)$, then
$$H_\alpha[^{k+2}Tf]\leq (C_0+kC_2R^\alpha)\sum_{i+j=k}H_\alpha[\partial^i\bar\partial^j f],$$
$$H_\alpha[^{k+2}\overline Tf]\leq (C_0+kC_2R^\alpha)\sum_{i+j=k}H_\alpha[\partial^i\bar\partial^j f].$$
\end{thm}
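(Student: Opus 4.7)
The plan is to reduce the desired estimate to the two lemmas just proved (Lemma 3.3 and Lemma 3.13) via the decomposition identity of Lemma 3.12. For $k = 0$, the bound is exactly Lemma 3.3 applied directly, with the sum on the right collapsing to $H_\alpha[f]$; no further work is needed in that base case. For $k \geq 1$, I would start from
$$^{k+2}Tf = {}^{2}T(\partial^k f) - \sum_{l=1}^{k} \partial^l S_b(\partial^{k-l} f),$$
take $H_\alpha[\,\cdot\,]$ of both sides, and use the triangle inequality (with respect to the seminorm $H_\alpha$, which is indeed subadditive) to split the estimate into two pieces.

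The first piece is handled by Lemma 3.3 applied to $\partial^k f \in C^{\alpha}(D)$, giving $H_\alpha[{}^{2}T(\partial^k f)] \leq C_0 H_\alpha[\partial^k f]$, which is bounded by $C_0 \sum_{i+j=k} H_\alpha[\partial^i\bar\partial^j f]$ since $(k,0)$ is one of the index pairs in the sum. For the second piece, I would apply Lemma 3.13 to each summand with $k$ replaced by $l$ and $f$ replaced by $\partial^{k-l} f \in C^{l+\alpha}(D)$:
$$H_\alpha[\partial^l S_b(\partial^{k-l} f)] \leq C_2 \frac{R^\alpha}{l+1} \sum_{p+q=l} H_\alpha[\partial^{p+k-l}\bar\partial^q f].$$
The crucial observation is that the re-indexed pairs $(p+k-l, q)$ all satisfy $(p+k-l) + q = k$, so the inner sum is bounded above by $\sum_{i+j=k} H_\alpha[\partial^i\bar\partial^j f]$.

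Summing over $l$ from $1$ to $k$, using $\sum_{l=1}^{k} \frac{1}{l+1} \leq k$, gives the second piece an overall bound of $k C_2 R^\alpha \sum_{i+j=k} H_\alpha[\partial^i\bar\partial^j f]$. Adding the two pieces produces the claimed coefficient $C_0 + k C_2 R^\alpha$. The conjugate estimate for $^{k+2}\overline{T}f$ follows immediately from the definition $^{k+2}\overline{T}f = \overline{^{k+2}T(\bar f)}$, since $H_\alpha$ and the $C^{k+\alpha}$ norm are invariant under complex conjugation, so the same bound with $\bar f$ in place of $f$ translates directly.

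I do not expect any real obstacle here — the entire argument is bookkeeping on top of Lemmas 3.3, 3.12, and 3.13. The only point requiring care is the index-matching in the second piece, namely verifying that the re-indexed partial-derivative multi-indices $(p+k-l, q)$ run over a subset of $\{(i,j): i+j = k\}$ so that the bound by the single common sum is legitimate. Beyond that, the estimate is obtained by a clean triangle-inequality splitting.
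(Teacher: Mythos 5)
Your proposal is correct and follows the paper's own argument almost verbatim: decompose $^{k+2}Tf$ via the identity of Lemma 3.11, apply the $^{2}T$ estimate to the first term and the $\partial^l S_b$ estimate of Lemma 3.12 to each term of the sum, and bound the harmonic-type sum by $k$. The only refinements are cosmetic — you keep the sharper factor $\frac{1}{l+1}$ where the paper loosens it to $\frac{1}{i}$, and you spell out the $k=0$ base case and the conjugate $\overline{T}$ estimate explicitly.
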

\begin{proof}
By Lemma 3.11, and applying Lemma 3.12, we have
\begin{eqnarray}
&&^{k+2}Tf=^2T(\partial^k f)-\sum_{i=1}^k\partial^i S_b(\partial^{k-i}f).\nonumber\\
H_{\alpha}[^{k+2}Tf]&\leq& H_\alpha[^2T(\partial^k f)]+\sum_{i=1}^kH_\alpha[\partial^i S_b(\partial^{k-i}f)]\nonumber\\
&\leq& C_0 H_\alpha[\partial^k f]+C_2R^\alpha\left\{\sum_{i=1}^k \frac{1}{i}\sum_{p+q=i}H_\alpha[\partial^p\bar\partial^q\partial^{k-i} f]\right\}\nonumber\\
&\leq& (C_0+C_2 k R^\alpha)\sum_{i+j=k}H_\alpha[\partial^i\bar\partial^j f].\nonumber
\end{eqnarray}
\end{proof}

\begin{lem}
If $h\in C^{k+\alpha}(D)$ $(k\geq 0)$, then
$$\|Th\|^{(k+1)}\leq 2^k(C_1(k+1)+C_0+kC_2R^\alpha) \|h\|^{(k)},$$
$$\|\overline Th\|^{(k+1)}\leq 2^k(C_1(k+1)+C_0+kC_2R^\alpha) \|h\|^{(k)}.$$
\end{lem}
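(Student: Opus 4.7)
I will only address the bound on $\|Th\|^{(k+1)}$; the bound for $\overline T$ follows by conjugation. By definition $\|Th\|^{(k+1)}=\max_{i+j=k+1}\|\partial^i\bar\partial^j Th\|$, so I need to control each mixed derivative $\partial^i\bar\partial^j Th$ with $i+j=k+1$ separately and then take the maximum. The key observation is that Theorem 3.7 splits these derivatives into two very different types: whenever $j\ge 1$, the $\bar\partial$ strips off against $T$ and one simply gets $\partial^i\bar\partial^j Th=\partial^i\bar\partial^{j-1}h$; the only genuinely new expression is the pure holomorphic derivative $\partial^{k+1}Th={}^{k+2}Th$.

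For the easy family ($j\ge 1$), the identity $\partial^i\bar\partial^j Th=\partial^i\bar\partial^{j-1}h$ gives
$$\|\partial^i\bar\partial^j Th\|=\|\partial^i\bar\partial^{j-1}h\|\leq \|h\|^{(k)}$$
directly, since $i+(j-1)=k$. This contribution is dominated by the target bound trivially.

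The substantive step is the estimate of $\|{}^{k+2}Th\|=|{}^{k+2}Th|+(2R)^\alpha H_\alpha[{}^{k+2}Th]$. Here I would plug in Lemma 3.9 and Theorem 3.13 to obtain
$$|{}^{k+2}Th|\leq C_1(k+1)R^\alpha\sum_{i+j=k}H_\alpha[\partial^i\bar\partial^j h],\qquad H_\alpha[{}^{k+2}Th]\leq (C_0+kC_2R^\alpha)\sum_{i+j=k}H_\alpha[\partial^i\bar\partial^j h].$$
Then I bound the common sum by brute counting: there are $k+1$ multi-indices with $i+j=k$, and for each of them $H_\alpha[\partial^i\bar\partial^j h]\leq \|\partial^i\bar\partial^j h\|/(2R)^\alpha\leq \|h\|^{(k)}/(2R)^\alpha$. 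Hence
$$\sum_{i+j=k}H_\alpha[\partial^i\bar\partial^j h]\leq (k+1)(2R)^{-\alpha}\|h\|^{(k)}.$$
Substituting back and simplifying $R^\alpha/(2R)^\alpha=2^{-\alpha}\le 1$ and $(2R)^\alpha/(2R)^\alpha=1$ yields
$$\|{}^{k+2}Th\|\leq (k+1)\bigl[C_1(k+1)+C_0+kC_2R^\alpha\bigr]\|h\|^{(k)}.$$

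The final cosmetic step is the absorption $k+1\leq 2^k$ (valid for every $k\ge 0$), which converts the leading factor $(k+1)$ into $2^k$ and produces exactly the asserted bound. Taking the maximum of this estimate and the trivial bound from the easy family completes the proof. I do not anticipate any real obstacle; the only thing one has to watch is the combinatorial book-keeping of how many terms sit in $\sum_{i+j=k}$ and how the powers of $R$ and $2R$ cancel, which is precisely where the $k+1\leq 2^k$ trick is needed to produce a clean statement.
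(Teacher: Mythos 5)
Your proposal is correct and tracks the paper's own argument almost line for line: the same split into the easy derivatives ($j\ge1$, handled by the identity $\partial^i\bar\partial^j Th=\partial^i\bar\partial^{j-1}h$) and the pure holomorphic derivative $\partial^{k+1}Th={}^{k+2}Th$, the same invocation of Lemma~3.9 and Theorem~3.13, the same bound $\sum_{i+j=k}H_\alpha[\partial^i\bar\partial^j h]\le(k+1)(2R)^{-\alpha}\|h\|^{(k)}$, and the same absorption $k+1\le 2^k$. The only cosmetic difference is that you drop the factor $2^{-\alpha}\le 1$ one step earlier than the paper does; both yield the stated bound, and you have correctly (if implicitly) noted that the constant exceeds $1$ so the easy family is indeed dominated.
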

\begin{proof}
Let $i+j=k+1$. If $j\geq 1$, then $\partial^i\bar\partial^j Th=\partial^i\bar\partial^{j-1}h$.
Otherwise, $\partial^{k+1}Th=^{k+2}Th$. We have by Lemma 3.9, 3.13
\begin{eqnarray}
\|^{k+2}Th\|&=&|^{k+2}Th|+(2R)^\alpha H_\alpha[^{k+2}Th]\nonumber\\
&\leq&C_1(k+1)R^\alpha \sum_{i+j=k}H_\alpha[\partial^i\bar\partial^jh]+(2R)^\alpha(C_0+kC_2R^\alpha)\sum_{i+j=k}H_\alpha[\partial^i\bar\partial^j h]\nonumber\\
&\leq&(C_1(k+1)R^\alpha +(2R)^\alpha(C_0+kC_2R^\alpha))\sum_{i+j=k}H_\alpha[\partial^i\bar\partial^j h]\nonumber\\
&\leq&2^k(C_1(k+1)2^{-\alpha}+C_0+kC_2R^\alpha)\|h\|^{(k)}.\nonumber
\end{eqnarray}
In the last inequality we have used that $H_\alpha[\partial^i\bar\partial^j h]\leq (2R)^{-\alpha}\|h\|^{(k)}$ for $i+j=k$. Also notice $2^k(C_1(k+1)2^{-\alpha}+C_0+kC_2R^\alpha)\geq 1$.
The proof is now complete.
\end{proof}
The following is the main result of this section.
\begin{thm}
If $h\in C^{\alpha}(D)$ and $\mu+\nu=m$ , then
$$\|T^\nu\overline T^\mu h\|^{(m)}\leq 2^{\frac{(m-1)m}{2}}(C_1 m+C_0+(m-1)C_2 R^\alpha)^m \|h\|.$$

\end{thm}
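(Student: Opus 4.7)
The plan is to obtain the estimate by iterating Lemma 3.14 exactly $m$ times. Lemma 3.14 already shows that a single application of $T$ or $\overline{T}$ raises regularity by one order and yields a quantitative bound with factor $2^k(C_1(k+1) + C_0 + kC_2 R^\alpha)$; the present theorem is essentially the telescoping product of those bounds.

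Concretely, I would set $h_0 = h$ and define $h_k$ recursively by first applying $\overline{T}$ for $k = 1, \ldots, \mu$ and then $T$ for $k = \mu+1, \ldots, m$. Each $h_k$ belongs to $C^{k+\alpha}(D)$ by the regularity portion of Lemma 3.14, which holds verbatim for both $T$ and $\overline{T}$, and the quantitative part gives
$$\|h_{k+1}\|^{(k+1)} \leq 2^k\bigl(C_1(k+1) + C_0 + kC_2 R^\alpha\bigr) \|h_k\|^{(k)}$$
for each $k = 0, 1, \ldots, m-1$, where $\|h_0\|^{(0)}$ is read as $\|h\|$ (consistent with the $k=0$ case of Lemma 3.14, whose right-hand side is $(C_1 + C_0)\|h\|$).

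Iterating this inequality from $k=0$ up to $k=m-1$ yields
$$\|T^\nu \overline{T}^\mu h\|^{(m)} = \|h_m\|^{(m)} \leq \|h\| \prod_{k=0}^{m-1} 2^k\bigl(C_1(k+1) + C_0 + kC_2 R^\alpha\bigr).$$
The powers of $2$ combine into $2^{0+1+\cdots+(m-1)} = 2^{m(m-1)/2}$. For the remaining factors the monotonicity bound $C_1(k+1) + C_0 + kC_2 R^\alpha \leq C_1 m + C_0 + (m-1)C_2 R^\alpha$, valid for every $k \leq m-1$, bounds the product by $(C_1 m + C_0 + (m-1)C_2 R^\alpha)^m$. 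Multiplying gives the claimed estimate.

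There is no real obstacle here: Lemma 3.14 carries all the analytic weight, and the present theorem is a clean product estimate. The only subtle point is to verify that the order in which $T$ and $\overline{T}$ are applied is immaterial, since at each step the intermediate function $h_k$ lies in the correct H\"older class $C^{k+\alpha}(D)$ so that Lemma 3.14 may be invoked without modification.
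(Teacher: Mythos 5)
Your proposal is correct and takes essentially the same route as the paper: iterate Lemma 3.14 once per operator to obtain the product $\prod_{k=1}^{m} 2^{k-1}\bigl(C_1 k + C_0 + (k-1)C_2 R^\alpha\bigr)$, then collect the powers of $2$ into $2^{m(m-1)/2}$ and bound each remaining factor by the monotone term $C_1 m + C_0 + (m-1)C_2 R^\alpha$. The only cosmetic difference is that you build the composition up from the innermost $\overline{T}$ while the paper peels operators off from the outside, which produces the identical telescoping product.
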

\begin{proof}
Since, if $\nu\geq 1$,
$$\|T^\nu\overline T^\mu h\|^{(m)}=\|T(T^{\nu-1}\overline T^\mu h)\|^{(m-1+1)},$$
which is less than, by invoking Lemma 3.14
$$2^{m-1}(C_1m+C_0+{(m-1)}C_2R^\alpha)\|T^{\nu-1}\overline T^\mu h\|^{(m-1)},$$
which is less than, by repeating the argument,
$$\prod_{k=1}^m2^{k-1}(C_1k+C_0+{(k-1)}C_2R^\alpha)\| h\|,$$
which, when simplified, gives the proof of the theorem.
\end{proof}

\section{An High order integral operator }\label{sec1}
In order to prove the existence of smooth solutions of system considered we will solve the equation with initial values at the origin. Namely, we are to solve the system of equations
\begin{eqnarray}
\partial^\mu\bar\partial^\nu u(z) &=& a(z, u(z), \mathcal{D}^1 u(z), ...,\mathcal{D}^{m-1} u(z), \mathcal{D}^m u(z))
\end{eqnarray}
with initial conditions at the origin:
\begin{eqnarray}
 \mathcal{D}^ku(0)&=& 0, \mbox{   }k=0,1,..., m-1\\
\mathcal{D}^m u(0)&=& v \mbox{  except for  } \mu,\nu.
\end{eqnarray}
where $v$ is a nonzero vector in $\co^{2^mn}$ $(|v|\not=0)$. Of course, we have to mention that $v$ must be given so that it matches with the definition of $\mathcal{D}^m u(0)$ (there are mixed derivatives repeatedly appeared).
We define, according to Lemma 3.2
$$\psi(z)=\sum_{j=0}^{\nu-1}T^j(S(\bar\partial^j f)+\sum_{j=0}^{\mu-1}T^\nu\overline T^j(\overline S(\partial^j\bar\partial^\nu f).$$
It is easy to see $\partial^\mu\bar\partial^\nu\psi(z)=0$. Every solution of (15) satisfies the integral equation, by Lemma 3.2 again
\begin{eqnarray}
u(z)=\psi(z) +T^\nu\overline T^\mu a(\zeta, u, \mathcal{D}^1 u, ...,\mathcal{D}^{m-1} u, \mathcal{D}^m u)(z)
\end{eqnarray}
In what follows, we will introduce function spaces that are Banach with norms to reflect the initial conditions, and
will modify the equation (18) to fit the defined Banach space and to apply fixed point theorem to produce the needed solutions. In doing so, we will take full advantages of
two parameters: the radius of $D$ and the radius of a closed ball in the Banach space where the solutions are sought from.

\subsection{Iteration procedures} For the sake of generality we consider a Banach space $\mathbf{B}(R)$ depending on a positive parameter $R$. $R$ will be less than some fixed positive quantity.
The norm on $\mathbf{B}(R)$ is denoted by $\|\cdot\cdot\cdot\|$; it may also depend on $R$.
The following deals with a map $\mathbf{\Theta}:\mathbf{A}(R,\gamma)\to \mathbf{B}(R)$, where $\mathbf{A}(R,\gamma)(\gamma>0)$ is a closed subset of $\mathbf{B}(R)$ defined as
$\mathbf{A}(R,\gamma)=\{\Omega|\|\Omega\|\leq \gamma\}$. Here we actually have a family of such maps with two parameters $R,\gamma$.
\begin{lem}
 Let $\mathbf{\Theta}:\mathbf{A}(R,\gamma)\to \mathbf{B}(R)$, and
and let $\delta(R,\gamma)$ and $\eta(R,\gamma)$ exist such that for all $\Omega,\Omega'\in \mathbf{A}(R,\gamma)$
$$\|\mathbf{\Theta}(\Omega')-\mathbf{\Theta}(\Omega)\|\leq \delta(R,\gamma)\|\Omega'-\Omega\|,$$
$$\|\mathbf{\Theta}(\Omega)\|\leq \eta(R,\gamma).$$
If there exist $R_0>0, \gamma_0>0$ such that  $\delta(R_0,\gamma_0)\leq 3/4$ and $\eta(R_0,\gamma_0)\leq \frac{\gamma_0}{2}$. Let $\psi\in\mathbf{B}(R)$ be such that
$\|\psi\|\leq \frac{\gamma_0}{2}$. Then the following equation
$$ \Omega=\psi+\mathbf{\Theta}(\Omega)$$
has a unique solution in $\mathbf{A}(R,\gamma_0)$ for $R\leq R_0$. The solution $\Omega$ is the limit of the sequence
\begin{eqnarray}
\Omega_{N+1}=\psi+\mathbf{\Theta}(\Omega_N)\mbox{  for } N=1,2,...\nonumber
\end{eqnarray}
where $\Omega_1\in \mathbf{A}(R_0,\gamma_0)$.
\end{lem}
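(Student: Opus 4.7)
The plan is to recognize this as a direct application of the Banach fixed point theorem (contraction mapping principle) to the self-map $F(\Omega) = \psi + \mathbf{\Theta}(\Omega)$ on the closed ball $\mathbf{A}(R_0,\gamma_0)$. Since $\mathbf{A}(R_0,\gamma_0)$ is a closed subset of the Banach space $\mathbf{B}(R_0)$, it is itself a complete metric space with the induced metric, so the standard contraction mapping framework applies.

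First, I would verify that $F$ maps $\mathbf{A}(R_0,\gamma_0)$ into itself. For any $\Omega\in \mathbf{A}(R_0,\gamma_0)$, the triangle inequality and the two hypotheses give
\[
\|F(\Omega)\|\leq \|\psi\|+\|\mathbf{\Theta}(\Omega)\|\leq \frac{\gamma_0}{2}+\eta(R_0,\gamma_0)\leq \frac{\gamma_0}{2}+\frac{\gamma_0}{2}=\gamma_0,
\]
so indeed $F(\Omega)\in \mathbf{A}(R_0,\gamma_0)$. Next, I would verify the contraction estimate: for $\Omega,\Omega'\in \mathbf{A}(R_0,\gamma_0)$,
\[
\|F(\Omega')-F(\Omega)\|=\|\mathbf{\Theta}(\Omega')-\mathbf{\Theta}(\Omega)\|\leq \delta(R_0,\gamma_0)\|\Omega'-\Omega\|\leq \tfrac{3}{4}\|\Omega'-\Omega\|.
\]

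With these two facts in hand, uniqueness follows immediately: if $\Omega$ and $\tilde\Omega$ were two fixed points, then $\|\Omega-\tilde\Omega\|\leq (3/4)\|\Omega-\tilde\Omega\|$, forcing equality. For existence I would establish convergence of the iteration $\Omega_{N+1}=\psi+\mathbf{\Theta}(\Omega_N)$ starting from any $\Omega_1\in\mathbf{A}(R_0,\gamma_0)$. By induction using step one, every $\Omega_N$ stays in $\mathbf{A}(R_0,\gamma_0)$, and the contraction estimate yields
\[
\|\Omega_{N+1}-\Omega_N\|\leq \tfrac{3}{4}\|\Omega_N-\Omega_{N-1}\|\leq \bigl(\tfrac{3}{4}\bigr)^{N-1}\|\Omega_2-\Omega_1\|,
\]
so $\{\Omega_N\}$ is Cauchy in $\mathbf{B}(R_0)$ and converges to some $\Omega_\infty$. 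Since $\mathbf{A}(R_0,\gamma_0)$ is closed, $\Omega_\infty$ lies there, and the Lipschitz continuity of $\mathbf{\Theta}$ lets me pass to the limit in the recursion to conclude $\Omega_\infty=\psi+\mathbf{\Theta}(\Omega_\infty)$.

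Finally, the statement asserts the conclusion for all $R\leq R_0$, not just $R=R_0$. I would handle this by observing that the hypotheses on $\delta$ and $\eta$ are understood to persist (or strengthen) when $R$ is decreased, so the same argument runs verbatim on $\mathbf{A}(R,\gamma_0)\subset \mathbf{B}(R)$ with the same constants $3/4$ and $\gamma_0/2$. There is no real obstacle here --- the lemma is a packaged, parameter-tracking version of the standard contraction principle. The only thing to be careful about is that the iteration starting at an arbitrary $\Omega_1\in\mathbf{A}(R_0,\gamma_0)$ remains in that ball at every step, which is precisely what the bound $\eta(R_0,\gamma_0)\leq \gamma_0/2$ combined with $\|\psi\|\leq \gamma_0/2$ is designed to ensure.
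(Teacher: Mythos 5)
Your proposal is correct and takes exactly the approach the paper intends: the paper's own justification (Remark 4.2) simply says the lemma is ``a simple consequence of contraction principle'' and gives no further detail, while you spell out the standard Banach fixed point argument --- self-map verification, contraction estimate, uniqueness, and convergence of the iterates. This is the intended proof, just written out in full.
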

\begin{rem}
We will make a repeated use of this lemma in various cases. The proof is simple consequence of contraction principle.
\end{rem}
\subsection{Integral equations on Banach spaces}
Here we fix the Banach space to work on.
Let $\mathbf{B}(R)$ be $[C^{m+\alpha}_0(D)]^n$, which is, by Lemma 2.8 a Banach space with norm $||\cdot\cdot\cdot||^{(m)}$ defined as follows:
if $f=(f_1,..., f_n)\in \mathbf{B}(R)$, then $\|f\|^{(m)}=\max_{1\leq i\leq n}\|f_i\|^{(m)}$.
Let $\gamma$ be a positive number, and we consider $\mathbf{A}(R,\gamma)=\{ f\in \mathbf{B(\R)}|\|f\|^{(m)}\leq \gamma\}$. $\mathbf{A}(R,\gamma)$ is a closed subset of $\mathbf{B}(R)$.
Now we assume  $u=(u^1,...,u^n)$ and $a=(a^1,...,a^n)$. According to (18) , we consider equations
$$u^i=\psi^i+T^\nu\overline T^\mu a^i(z, u, \mathcal{D}^1 u, ...,\mathcal{D}^{m-1} u, \mathcal{D}^m u)$$
where $\psi=(\psi^1,...,\psi^n)$ is such that $\partial^\mu\bar\partial^\nu\partial \psi=0$.
To simplify the notation we define
$$\omega^i(f)=T^\nu\overline T^\mu a^i(z, f, \mathcal{D}^1 f, ...,\mathcal{D}^{m-1} f, \mathcal{D}^m f)$$
for all $f\in \mathbf{B}(R)$ and $i=1,2,...,n$.
To assure that the initial conditions in (16) (17) are also satisfied , and to further fix the solution, we consider
\begin{eqnarray}
\mathbf{\Theta}^i(f)(\zeta)&=&\omega^i(f)(\zeta)-\sum_{p=0}^{m-1}\frac{1}{p!}\sum_{k+l=p}[\partial^k\bar\partial^l \omega^i(f)](0)\zeta^k\bar\zeta^l\nonumber\\
&-&\frac{1}{m!}\sum_{k+l=m}[\partial^k\bar\partial^l \omega^i(f)](0)\zeta^k\bar\zeta^l+\frac{1}{\mu!\nu!}[\partial^\mu\bar\partial^\nu \omega^i(f)](0)\zeta^\mu\bar\zeta^\nu.
\end{eqnarray}

We first notice that $T, \overline T $ map $C^\alpha(\D)\to C^{1+\alpha}(D)$, and it follows, by the construction (19), $\mathbf{\Theta}^i(f)(\zeta)\in
C^{m+\alpha}_0(D)$ for $f\in \mathbf{B}(R)$ and $[\partial^k\bar\partial^l{\Theta}^i(f)](0)=0$ for $k+l=m, k\not=\mu,l\not=\nu$ . Thus we define a map from $\mathbf{B}(R)$ to $\mathbf{B}(R)$.
\begin{eqnarray}
\mathbf{\Theta}&:&\mathbf{B}(R)\rightarrow\mathbf{B}(R)\nonumber\\
\mathbf{\Theta}(f)&=&(\mathbf{\Theta}^1(f),...,\mathbf{\Theta}^n(f)).
\end{eqnarray}
Here we recall again $\|\mathbf{\Theta}(f)\|^{(m)}=\max_{1\leq i\leq n}\|\mathbf{\Theta}^i(f)\|^{(m)}$. In order to apply iteration procedure,
we will first estimate
$$\|\mathbf{\Theta}^i(f)-\mathbf{\Theta}^i(g)\|^{(m)}$$
for all $f, g\in \mathbf{A}(R,\gamma)$ in terms of $\|f-g\|^{(m)}$. Recall that for $f\in C^{m+\alpha}_0(D)$,
$\|f\|^{(m)}=\max_{k+l=m}\{\|\partial^k\bar\partial^lf\|\}$.
First we have
\begin{eqnarray}
&&\|\mathbf{\Theta}^i(f)-\mathbf{\Theta}^i(g)\|^{(m)}\nonumber\\
&\leq& \|\mathbf{\omega}^i(f)-\mathbf{\omega}^i(g)\|^{(m)}+\bigg|\sum_{k+l=m}\{[\partial^k\bar\partial^l \omega^i(f)](0)
-[\partial^k\bar\partial^l \omega^i(g)](0)\}\bigg|\nonumber\\
&=&I_9+I_{10}.
\end{eqnarray}
Next we want to estimate $\|\mathbf{\Theta}^i(f)\|^{(m)}$ in terms of a constant. In fact, we have the following estimate:
\begin{eqnarray}
\|\mathbf{\Theta}^i(f)\|^{(m)}&\leq& \|\mathbf{\omega}^i(f)\|^{(m)}+\bigg|\sum_{k+l=m}[\partial^k\bar\partial^l\omega^i(f)](0)\bigg|\nonumber\\
&=&I_{11}+I_{12}.\nonumber
\end{eqnarray}
In the following subsections we will provide details of estimates of $I_9, I_{10}, I_{11}$, and $I_{12}$. This is where assumptions are to be used.

\subsubsection{Estimate of $I_9$}
In this section we always have functions $f,g$ to be in $\mathbf{A}(R,\gamma)$.
We first begin with the estimate:
\begin{eqnarray}
&&\|\mathbf{\omega}^i(f)-\mathbf{\omega}^i(g)\|^{(m)}\nonumber\\
&=&\|T^\nu\overline T^\mu (a^i(z, f, \mathcal{D}^1 f, ...,\mathcal{D}^{m} f)-a^i(z, g, \mathcal{D}^1 g, ...,\mathcal{D}^{m} g))\|^{(m)}\nonumber\\
&\leq& M \|a^i(z, f, \mathcal{D}^1 f, ...,\mathcal{D}^{m} f)-a^i(z, g, \mathcal{D}^1 g, ...,\mathcal{D}^{m}g)\|,
\end{eqnarray}
where $M$ is given, according to Theorem 3.15, by
\begin{eqnarray}
M=2^{\frac{(m-1)m}{2}}(C_1 m+C_0+(m-1)C_2 R^\alpha)^m.
\end{eqnarray}
In order to estimate (22), we first estimate the norm $|\cdot\cdot\cdot|$. In the process we want to separate variables in $\mathcal{D}^{m}$ from those in
$\mathcal{D}^0, ..., \mathcal{D}^{m-1}$. In fact, we have
\begin{eqnarray}
&&a^i(\zeta, \mathcal{D}^0f(\zeta), \mathcal{D}^1 f(\zeta), ...,\mathcal{D}^{m} f(\zeta))-a^i(\zeta, \mathcal{D}^0g(\zeta), \mathcal{D}^1 g(\zeta), ...,\mathcal{D}^{m-1} g(\zeta))\nonumber\\
&=&\int_0^1\frac{d}{dt}a^i(\zeta, tf(\zeta)+(1-t)g(\zeta),..., t\mathcal{D}^{m} f(\zeta)+(1-t)\mathcal{D}^{m} g(\zeta))dt\nonumber\\
&=&\sum_{j=0}^n\sum_{p=0}^{m-1}\sum_{k+l=p}A^{p,j}_{k,l}\partial^k\bar\partial^l(f_j-g_j)+\bar A^{p,j}_{k,l}\overline{\partial^k\bar\partial^l(f_j-g_j)}\nonumber\\
&+&\sum_{j=0}^n\sum_{k+l=m}B^{m,j}_{k,l}\partial^k\bar\partial^l(f_j-g_j)+\bar B^{m,j}_{k,l}\overline{\partial^k\bar\partial^l(f_j-g_j)},
\end{eqnarray}
where
$$A^{p,j}_{k,l}=\int_0^1\frac{\partial a^i}{\partial \eta_{p,j}^{k,l}}(\zeta, \mathcal{W}^0,...,\mathcal{W}^{m})dt,\mbox{   }
\bar A^{p,j}_{k,l}=\int_0^1\frac{\partial a^i}{\bar\partial \eta_{p,j}^{k,l}}(\zeta,  \mathcal{W}^0,...,\mathcal{W}^{m})dt,$$
$$B^{m,j}_{k,l}=\int_0^1\frac{\partial a^i}{\partial \eta_{m,j}^{k,l}}(\zeta,  \mathcal{W}^0,...,\mathcal{W}^{m})dt,\mbox{   }
\bar B^{m,j}_{k,l}=\int_0^1\frac{\partial a^i}{\bar\partial \eta_{m,j}^{k,l}}(\zeta,  \mathcal{W}^0,...,\mathcal{W}^{m})dt,$$
where we have used shorten notations:
$$\mathcal{W}^{k}=t\mathcal{D}^k f(\zeta)+(1-t)\mathcal{D}^k g(\zeta)$$
for $k=0, 1, ..., m$, and $\eta_{p,j}^{k,l}$ is a variable in $\eta_p$.
Therefore, taking norm $\|\cdot\cdot\cdot\|$ on (24) we have, using Lemma 2.4,
\begin{eqnarray}
&&\|a^i(\zeta, \mathcal{D}^0f, \mathcal{D}^1 f, ...,\mathcal{D}^{m} f)-a^i(\zeta, \mathcal{D}^0g, \mathcal{D}^1 g, ...,\mathcal{D}^{m} g)\|\nonumber\\
&\leq&\sum_{j=0}^n\sum_{p=0}^{m-1}\sum_{k+l=p}\|A^{p,j}_{k,l}\partial^k\bar\partial^l(f_j-g_j)\|+\|\bar A^{p,j}_{k,l}\overline{\partial^k\bar\partial^l(f_j-g_j)}\|\nonumber\\
&+&\sum_{j=0}^n\sum_{k+l=p}\|B^{m,j}_{k,l}\partial^k\bar\partial^l(f_j-g_j)\|+\|\bar B^{m,j}_{k,l}\overline{\partial^k\bar\partial^l(f_j-g_j)}\|\nonumber\\
&\leq&\sum_{j=0}^n\sum_{p=0}^{m-1}\sum_{k+l=p}\|A^{p,j}_{k,l}\|\|\partial^k\bar\partial^l(f_j-g_j)\|+\|\bar A^{p,j}_{k,l}\|\|\overline{\partial^k\bar\partial^l(f_j-g_j)}\|\nonumber\\
&+&\sum_{j=0}^n\sum_{k+l=p}\|B^{m,j}_{k,l}\|\|\partial^k\bar\partial^l(f_j-g_j)\|+\|\bar B^{m,j}_{k,l}\|\|\overline{\partial^k\bar\partial^l(f_j-g_j)}\|\nonumber\\
&\leq&\sum_{p=0}^{m-1}\sum_{k+l=p}\{\|A^{p,j}_{k,l}\|+\|\bar A^{p,j}_{k,l}\|\}\|{\partial^k\bar\partial^l(f-g)}\|\nonumber\\
&+&\sum_{k+l=p}\{\|B^{m,j}_{k,l}\|+\|\bar B^{m,j}_{k,l}\|\}\|{\partial^k\bar\partial^l(f-g)}\|\nonumber\\
&\leq&\sum_{p=0}^{m-1}\sum_{k+l=p}\{\|A^{p,j}_{k,l}\|+\|\bar A^{p,j}_{k,l}\|\}\frac{6^{m-p}}{(m-p)!}R^{m-p}\|f-g\|^{(m)}\nonumber\\
&+&\sum_{k+l=p}\{\|B^{m,j}_{k,l}\|+\|\bar B^{m,j}_{k,l}\|\}\|f-g\|^{(m)}\nonumber\\
&\leq&\bigg\{\sum_{p=0}^{m-1}\frac{6^{m-p}}{(m-p)!}R^{m-p}\sum_{k+l=p}\{\|A^{p,j}_{k,l}\|+\|\bar A^{p,j}_{k,l}\|\}\nonumber\\
&+&\sum_{k+l=m}\{\|B^{m,j}_{k,l}\|+\|\bar B^{m,j}_{k,l}\|\}\bigg\}\|f-g\|^{(m)}.
\end{eqnarray}
In order to estimate the quantities $\|A^{p,j}_{k,l}\|, \bar A^{p,j}_{k,l}\|, \|B^{m,j}_{k,l}\| $ and $\|\bar B^{m,j}_{k,l}\|$ in (25), we need to study the ranges of $\mathcal{W}^k (k=0, 1, ..., m)$ for $f, g\in \mathbf{A}(R,\gamma)$. The following is what we need.
\begin{lem}
If $f, g\in \mathbf{A}(R,\gamma)$, then
\begin{eqnarray}
|\mathcal{W}^k|&\leq& 6^m R^{m-k}\gamma,\mbox{   } k=0, 1, ..., m-1,\nonumber\\
|\mathcal{W}^m|&\leq& \gamma.
\end{eqnarray}
\end{lem}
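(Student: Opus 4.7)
The plan is to split into the top-order case $k = m$ and the subcritical cases $k \leq m-1$, since different tools apply to each, and then combine via the triangle inequality. The key observation is that $\mathcal{W}^k = t\mathcal{D}^k f + (1-t)\mathcal{D}^k g$ is a convex combination with $t \in [0,1]$, so it suffices to bound $|\mathcal{D}^k f|_D$ and $|\mathcal{D}^k g|_D$ separately by the same constant. Since $f, g \in \mathbf{A}(R,\gamma)$ means $\|f\|^{(m)}, \|g\|^{(m)} \leq \gamma$, both the definition of $\|\cdot\|^{(m)}$ and Lemma 2.4 are available to control derivatives of $f$ and $g$.

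For the top case $k = m$, I would argue directly from the definitions: the entries of $\mathcal{D}^m f$ are derivatives $\partial^i \bar\partial^j f$ with $i + j = m$, and by definition of $\|\cdot\|^{(m)}$ each such $\partial^i \bar\partial^j f$ satisfies $\|\partial^i \bar\partial^j f\| \leq \|f\|^{(m)} \leq \gamma$. Since the H\"older norm $\|\cdot\|$ dominates the sup norm $|\cdot|_D$, taking the max over entries gives $|\mathcal{D}^m f|_D \leq \gamma$, and similarly for $g$, so the convex combination yields $|\mathcal{W}^m| \leq \gamma$.

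For the subcritical cases $k \leq m - 1$, I would invoke Lemma 2.4 directly. That lemma gives, for any $i + j = k \leq m$,
$$\|\partial^i \bar\partial^j f\| \leq \frac{6^{m-k}}{(m-k)!} R^{m-k} \|f\|^{(m)}.$$
Using $\frac{6^{m-k}}{(m-k)!} \leq 6^m$ (valid because $6^{m-k} \leq 6^m$ and $(m-k)! \geq 1$), together with $\|f\|^{(m)} \leq \gamma$ and the domination of $|\cdot|_D$ by $\|\cdot\|$, each entry of $\mathcal{D}^k f$ is bounded in sup norm by $6^m R^{m-k} \gamma$. The same bound holds for $\mathcal{D}^k g$, and then the triangle inequality on the convex combination gives $|\mathcal{W}^k| \leq 6^m R^{m-k} \gamma$.

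There is no real obstacle: the lemma is a bookkeeping step that repackages Lemma 2.4 into a form convenient for the estimates of $I_9$ through $I_{12}$ that follow, where one needs uniform sup-norm bounds on the arguments at which the functions $A^{p,j}_{k,l}, B^{m,j}_{k,l}$ and their conjugates are evaluated. The slight slack in the constant $6^m$ (as opposed to the sharper $6^{m-k}/(m-k)!$) is chosen to give a single uniform factor that absorbs all the $k$-dependence into a clean $R^{m-k}$, which will be precisely what is needed when comparing the resulting sums of powers of $R$ and choosing $R$ small.
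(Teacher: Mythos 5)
Your proof is correct and follows essentially the same route as the paper's: both invoke Lemma 2.4 for $k \leq m-1$, use the elementary bound $\frac{6^{m-k}}{(m-k)!} \leq 6^m$, rely on $\|\cdot\|$ dominating $|\cdot|_D$, and handle $k = m$ directly from the definition of $\|\cdot\|^{(m)}$. The only cosmetic difference is that you make the convex-combination/triangle-inequality step explicit, whereas the paper compresses it into ``by the definition of norm.''
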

\begin{proof}
If $f$ is a function in $C_0^{m+\alpha}(D)$, then by Lemma 2.4
we have, if $i+j=k\leq m-1$,
$$\|\partial^i\bar\partial^j f\|\leq \frac{6^{m-k}}{(m-k)!}R^{m-k}\|f\|^{(m)}\leq \frac{6^{m-k}}{(m-k)!}R^{m-k}\gamma.$$
Particularly, it implies
$$|\partial^i\bar\partial^j f|\leq \frac{6^{m-k}}{(m-k)!}R^{m-k}\gamma,$$
whence, by the definition of norm, we have
$$|\mathcal{W}^k|\leq \frac{6^{m-k}}{(m-k)!}R^{m-k}\gamma\leq 6^mR^{m-k}\gamma$$
for $k=0, 1, ..., m-1$. The result for $|\mathcal{W}^m|$ is obvious.
\end{proof}
To continue on the estimates, we need a lemma on Lipschitz properties of $C_0^{m+\alpha}(D)$.
\begin{lem}
Let $f\in C_0^{m+\alpha}(D)$. Then, for $\zeta', \zeta\in D$, and $i+j=l\leq m-1$, we have
$$|\partial^i\bar\partial^j f(\zeta')-\partial^i\bar\partial^j f(\zeta)|\leq 6^m R^{m-l-1}\|f\|^{(m)}|\zeta'-\zeta|.$$
\end{lem}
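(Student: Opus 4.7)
The plan is to let $g = \partial^i\bar\partial^j f$ with $i+j = l \leq m-1$, and reduce the Lipschitz bound for $g$ to a sup-norm bound for its first derivatives $\partial g = \partial^{i+1}\bar\partial^j f$ and $\bar\partial g = \partial^i\bar\partial^{j+1} f$, which are derivatives of $f$ of order $l+1 \leq m$. The point is that Lemma 2.4 is set up exactly to control $\|\partial^p\bar\partial^q f\|$ for $p+q \leq m$ in terms of $\|f\|^{(m)}$, so once we trade a Lipschitz estimate on $g$ for sup bounds on $\partial g, \bar\partial g$, we can finish by quoting Lemma 2.4.

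First I would parametrize the segment from $\zeta$ to $\zeta'$ by $\gamma(t) = t\zeta' + (1-t)\zeta$ for $t \in [0,1]$, apply the fundamental theorem of calculus to $h(t) = g(\gamma(t))$, and use the chain rule in complex form $h'(t) = \partial g(\gamma(t))(\zeta'-\zeta) + \bar\partial g(\gamma(t))(\bar\zeta'-\bar\zeta)$. Taking absolute values this gives
\[
|g(\zeta') - g(\zeta)| \leq \bigl(|\partial g|_D + |\bar\partial g|_D\bigr)|\zeta'-\zeta|.
\]

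Next I would apply Lemma 2.4 to the functions $\partial^{i+1}\bar\partial^j f$ and $\partial^i\bar\partial^{j+1}f$, both of order $l+1 \leq m$. Since $|h| \leq \|h\|$ by definition of $\|\cdot\|$, this gives
\[
|\partial g|_D + |\bar\partial g|_D \leq 2 \cdot \frac{6^{m-l-1}}{(m-l-1)!} R^{m-l-1} \|f\|^{(m)}.
\]
Combining with the previous display yields a Lipschitz bound with constant $\frac{2 \cdot 6^{m-l-1}}{(m-l-1)!} R^{m-l-1} \|f\|^{(m)}$.

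Finally, I need only check that this constant is dominated by $6^m R^{m-l-1} \|f\|^{(m)}$. Since $(m-l-1)! \geq 1$, we have $\frac{2 \cdot 6^{m-l-1}}{(m-l-1)!} \leq 2 \cdot 6^{m-l-1} \leq 6 \cdot 6^{m-l-1} = 6^{m-l} \leq 6^m$, because $l \geq 0$. This finishes the proof. There is no real obstacle here — the only thing to be mindful of is making sure the Lemma 2.4 hypothesis applies, which it does because $\partial^{i+1}\bar\partial^j f$ and $\partial^i\bar\partial^{j+1}f$ lie in $C_0^{(m-l-1)+\alpha}(D)$ thanks to $f \in C_0^{m+\alpha}(D)$, and their order $l+1$ still satisfies $l+1 \leq m$.
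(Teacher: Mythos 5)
Your proof is correct and follows essentially the same route as the paper's: write the difference as a fundamental-theorem-of-calculus integral along the segment from $\zeta$ to $\zeta'$, bound the resulting expression by $(|\partial^{i+1}\bar\partial^j f|+|\partial^i\bar\partial^{j+1}f|)|\zeta'-\zeta|$, and then invoke Lemma 2.4 (applied to the order-$(l+1)$ derivatives) to finish. The only cosmetic difference is that the paper passes through the intermediate quantity $\|f\|^{(l+1)}$ while you apply Lemma 2.4 to each of $\partial^{i+1}\bar\partial^j f$ and $\partial^i\bar\partial^{j+1}f$ directly, which is the same estimate.
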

\begin{proof} We have
\begin{eqnarray}
\partial^i\bar\partial^j f(\zeta')-\partial^i\bar\partial^j f(\zeta)&=&\int_0^1\frac{d}{dt}\{\partial^i\bar\partial^j f(t\zeta'+(1-t)\zeta)\}dt\nonumber\\
&=&\int_0^1\partial^{i+1}\bar\partial^j f(t\zeta'+(1-t)\zeta)(\zeta'-\zeta)+\partial^{i}\bar\partial^{j+1} f(t\zeta'+(1-t)\zeta)\overline {(\zeta'-\zeta)}dt\nonumber
\end{eqnarray}
whence
\begin{eqnarray}
|\partial^i\bar\partial^j f(\zeta')-\partial^i\bar\partial^j f(\zeta)|&\leq& (|\partial^{i+1}\bar\partial^j f|+|\partial^{i}\bar\partial^{j+1} f|)|\zeta'-\zeta|\nonumber\\
&\leq& 2\|f\|^{(l+1)}|\zeta'-\zeta|\nonumber\\
&\leq&2\frac{6^{m-l-1}}{(m-l-1)!}R^{m-l-1}\|f\|^{(m)}\nonumber\\
&\leq& 6^m R^{m-l-1}\|f\|^{(m)},\nonumber
\end{eqnarray}
where in the last inequality we have used Lemma 2.4.
\end{proof}
In the following we will use a compact set in $\Omega$ defined as follows:
$$E(R, \gamma)=D\times\Pi_{k=0}^{m-1}\{z\in\co^{n2^k}||z|\leq 6^mR^{m-k}\gamma\}\times\{z\in \co^{n2^m}||z|\leq\gamma\}.$$
In order for $a(\zeta, \mathcal{W}^0, ...,  \mathcal{W}^{m-1},  \mathcal{W}^m)$ is defined for $f, g\in [C_0^{m+\alpha}(D)]^n$ with $\|f\|^{(m)}\leq \gamma, \|g\|^{(m)}\leq \gamma$, we now have to choose $R$ or $\gamma$ or both  so that
\begin{eqnarray}
6^mR^{m}\gamma\leq R'.
\end{eqnarray}
We will continue to make choice of $R, \gamma$ under this condition (27). Now we first define some constants, whose usefulness will be self-evident during proofs below and next sections.
\begin{eqnarray}
A(R, \gamma)&=&\max\bigg\{\bigg|\frac{\partial a^i}{\partial \eta_{p,j}^{k,l}}\bigg|_{E(R,\gamma)},\bigg|\frac{\partial a^i}{\bar\partial \eta_{p,j}^{k,l}}\bigg|_{E(R,\gamma)} \bigg |k+l=p; 0\leq p\leq m-1, i,j=1, ..., n\bigg\}\nonumber\\
B(R, \gamma)&=&\max\bigg\{\bigg|\frac{\partial a^i}{\partial \eta_{m,j}^{k,l}}\bigg|_{E(R,\gamma)},\bigg|\frac{\partial a^i}{\bar\partial \eta_{m,j}^{k,l}}\bigg|_{E(R,\gamma)} \bigg |k+l=m;  i=1, ..., n\bigg\}\nonumber\\
C(R,\gamma)&=&\max\bigg\{\bigg|\frac{\partial a^i}{\partial\zeta}\bigg|_{E(R,\gamma)}\bigg|\frac{\partial a^i}{\partial\bar\zeta}\bigg|_{E(R,\gamma)}\bigg|i=1,...,n\bigg\}\nonumber\\
H_\alpha^A[R,\gamma]&=&\max\bigg\{H_\alpha\bigg [\frac{\partial a^i}{\partial \eta_{p,j}^{k,l}}\bigg]_{E(R,\gamma)}, H_\alpha\bigg [\frac{\partial a^i}{\partial \bar\eta_{p,j}^{k,l}}\bigg]_{E(R,\gamma)}\bigg |k+l=p; 0\leq p\leq m-1, i,j=1, ..., n\bigg\}\nonumber\\
H_1^A[R,\gamma]&=&\max\bigg\{H_1\bigg [\frac{\partial a^i}{\partial \eta_{p,j}^{k,l}}\bigg]_{E(R,\gamma)}, H_1\bigg [\frac{\partial a^i}{\partial \bar\eta_{p,j}^{k,l}}\bigg]_{E(R,\gamma)}\bigg |k+l=p; 0\leq p\leq m-1, i,j=1, ..., n\bigg\}\nonumber\\
H_\alpha^B[R,\gamma]&=&\max\bigg\{H_\alpha\bigg [\frac{\partial a^i}{\partial \eta_{m,j}^{k,l}}\bigg]_{E(R,\gamma)}, H_\alpha\bigg [\frac{\partial a^i}{\partial \bar\eta_{m,j}^{k,l}}\bigg]_{E(R,\gamma)}\bigg |k+l=m, i,j=1, ..., n\bigg\}\nonumber\\
H_1^B[R,\gamma]&=&\max\bigg\{H_1\bigg [\frac{\partial a^i}{\partial \eta_{m,j}^{k,l}}\bigg]_{E(R,\gamma)}, H_1\bigg [\frac{\partial a^i}{\partial \bar\eta_{m,j}^{k,l}}\bigg]_{E(R,\gamma)}\bigg |k+l=m, i,j=1, ..., n\bigg\}\nonumber\\
H_\alpha^C[R,\gamma]&=&\max\bigg\{H_\alpha\bigg [\frac{\partial a^i}{\partial\zeta}\bigg]_{E(R,\gamma)}, H_\alpha\bigg [\frac{\partial a^i}{\partial \bar\zeta}\bigg]_{E(R,\gamma)}\bigg | i=1, ..., n\bigg\}\nonumber\\
H_1^C[R,\gamma]&=&\max\bigg\{H_1\bigg [\frac{\partial a^i}{\partial\zeta}\bigg]_{E(R,\gamma)}, H_1\bigg [\frac{\partial a^i}{\partial \bar\zeta}\bigg]_{E(R,\gamma)}\bigg | i=1, ..., n\bigg\}.
\end{eqnarray}
Here we denote $H_1[f]$ for Lipschitz constant of of a function $f$, which is function of many variables. We make a remark here about H\"older constant in many variables and one variable slice. Let $h$ be a function defined on open set $\Omega$ in $\co^N$. The H\"older constant of $h$ is defined
$$H_\alpha[h]=\sup\bigg\{\frac{|h(u)-h(v)|}{|u-v|^\alpha}\bigg|u,v\in \Omega\bigg\}.$$
Now if we restrict $h$ to be on one variable: $g(\zeta)=h(u_1,\cdot\cdot\cdot,u_i,\zeta,u_{i+1},\cdot\cdot\cdot,u_N)$, then
$H_\alpha[g]\leq H_\alpha[h]$. This fact is used below and elsewhere without mention.
It is obvious that
\begin{eqnarray}
&&\left |A^{p,j}_{k,l}\right|\leq A(R,\gamma)
,\mbox{  }\left |\bar A^{p,j}_{k,l}\right |\leq A(R,\gamma),\nonumber\\
&&\left |B^{m,j}_{k,l}\right |\leq B(R,\gamma)
,\mbox{  }\left |\bar B^{m,j}_{k,l}\right |\leq B(R,\gamma).
\end{eqnarray}
Now we estimate: inserting terms,
\begin{eqnarray}
&&A^{p,j}_{k,l}(\zeta')-A^{p,j}_{k,l}(\zeta)\nonumber\\
&=&\int_0^1\{\frac{\partial a^i}{\partial \eta_{p,j}^{k,l}}(\zeta', \mathcal{W}(\zeta'),...,\mathcal{W}^{m-1}(\zeta'),\mathcal{W}^m(\zeta') )-\frac{\partial a^i}{\partial \eta_{p,j}^{k,l}}(\zeta, \mathcal{W}^0(\zeta),...,\mathcal{W}^{m-1}(\zeta),\mathcal{W}^m(\zeta) )\}dt\nonumber\\
&=&\int_0^1\{\frac{\partial a^i}{\partial \eta_{p,j}^{k,l}}(\zeta', \mathcal{W}(\zeta'),...,\mathcal{W}^{m-1}(\zeta'),\mathcal{W}(\zeta') )-\frac{\partial a^i}{\partial \eta_{p,j}^{k,l}}(\zeta, \mathcal{W}^0(\zeta'),...,\mathcal{W}^{m-1}(\zeta'),\mathcal{W}^m(\zeta') )\}dt\nonumber\\
&+&\int_0^1\{\frac{\partial a^i}{\partial \eta_{p,j}^{k,l}}(\zeta, \mathcal{W}^0(\zeta'),...,\mathcal{W}^{m-1}(\zeta'),\mathcal{W}^m(\zeta') )-\frac{\partial a^i}{\partial \eta_{p,j}^{k,l}}(\zeta, \mathcal{W}^0(\zeta), \mathcal{W}^1(\zeta'),...,\mathcal{W}^{m-1}(\zeta'),\mathcal{W}^m(\zeta') )\}dt\nonumber\\
&+&\cdot\cdot\cdot\nonumber\\
 &+&\int_0^1\{\frac{\partial a^i}{\partial \eta_{p,j}^{k,l}}(\zeta, \mathcal{W}^0(\zeta),...,\mathcal{W}^{m-1}(\zeta),\mathcal{W}^m(\zeta') )-\frac{\partial a^i}{\partial \eta_{p,j}^{k,l}}(\zeta, \mathcal{W}^0(\zeta), \mathcal{W}^1(\zeta),...,\mathcal{W}^{m-1}(\zeta),\mathcal{W}^m(\zeta) )\}dt,\nonumber
 \end{eqnarray}
 whence
\begin{eqnarray}
 &&|A^{p,j}_{k,l}(\zeta')-A^{p,j}_{k,l}(\zeta)|\nonumber\\
 &\leq&H_\alpha^A[R,\gamma]\bigg\{|\zeta'-\zeta|^\alpha+\sum_{l=0}^{m-1}\sum_{ i+j=l}
 (|\partial^i\bar \partial^j f(\zeta')-\partial^i\bar \partial^j f(\zeta)|+|\partial^i\bar \partial^j g(\zeta')-\partial^i\bar \partial^j g(\zeta)|)^\alpha\bigg\}\nonumber\\
 &+&H_1^B[R,\gamma]\sum_{i+j=m}
 \bigg\{|\partial^i\bar \partial^j f(\zeta')-\partial^i\bar \partial^j f(\zeta)|+|\partial^i\bar \partial^j g(\zeta')-\partial^i\bar \partial^j g(\zeta)|\bigg\}\nonumber\\
 &\leq&
 H_\alpha^A[R,\gamma]\bigg\{|\zeta'-\zeta|^\alpha+2\sum_{l=0}^{m-1}\sum_{ i+j=l}
 (12^mR^{m-l-1}\gamma)^\alpha|\zeta'-\zeta|^\alpha \bigg\}\nonumber\\
 &+&H_1^A[R,\gamma]\sum_{i+j=m}
 H_\alpha[\partial^i\bar \partial^j f]|\zeta'-\zeta|^\alpha+H_\alpha[\partial^i\bar \partial^j g]|\zeta'-\zeta|^\alpha.\nonumber
 \end{eqnarray}
Therefore, we have
\begin{eqnarray}
H_\alpha [A^{p,j}_{k,l}] &\leq&H_\alpha^A[R,\gamma]\bigg\{1+2\sum_{l=0}^{m-1}2^l
 (12^mR^{m-l-1}\gamma)^\alpha\bigg\}\nonumber\\
 &+&H_1^A[R,\gamma]\sum_{i+j=m}
 H_\alpha[\partial^i\bar \partial^j f]+H_\alpha[\partial^i\bar \partial^j g]
\end{eqnarray}
By (29), (30), we have
\begin{eqnarray}
\|A^{p,j}_{k,l}\|&=&|A^{p,j}_{k,l}|+(2R)^\alpha H_\alpha [A^{p,j}_{k,l}] \nonumber\\
&\leq& A(R,\gamma)+(2R)^\alpha H_\alpha^A[R,\gamma]\bigg\{1+2\sum_{l=0}^{m-1}2^l
 (12^mR^{m-l-1}\gamma)^\alpha\bigg\}\nonumber\\
 &+&H_1^A(R,\gamma)\sum_{i+j=m}
 (2R)^\alpha \{H_\alpha[\partial^i\bar \partial^j f]+H_\alpha[\partial^i\bar \partial^j g]\}\nonumber\\
 &\leq& A(R,\gamma)+(2R)^\alpha H_\alpha^A[R,\gamma]\bigg\{1+2\sum_{l=0}^{m-1}2^l
 (12^mR^{m-l-1}\gamma)^\alpha\bigg\}\nonumber\\
 &+& 2\gamma 2^m H_1^A(R,\gamma)
\end{eqnarray}
where we have used that $(2R)^\alpha H_\alpha[\partial^i\bar\partial^j]\leq \|f\|^{(m)}\leq \gamma$ if $i+j=m$.
Of course, $\|\bar A^{p,j}_{k,l}\|$ satisfies the same estimate. By the same argument, we can have
\begin{eqnarray}
\|B^{p,j}_{k,l}\|&\leq& B(R,\gamma)+(2R)^\alpha H_\alpha^B[R,\gamma]\bigg\{1+2\sum_{l=0}^{m-1}2^l
 (12^mR^{m-l-1}\gamma)^\alpha \bigg\}\nonumber\\
 &+& \gamma 2^{m+1} H_1^B(R,\gamma)
\end{eqnarray}
Putting (22),(23),(31), (32) all together we have thus shown that
\begin{eqnarray}
\|a^i(\zeta, \mathcal{D}^0f, \mathcal{D}^1 f, ...,\mathcal{D}^{m} f)-a^i(\zeta, \mathcal{D}^0g, \mathcal{D}^1 g, ...,\mathcal{D}^{m} g)\|&\leq& \delta_1(R,\gamma)\|f-g\|^{(m)}\\
\|\mathbf{\omega}^i(f)-\mathbf{\omega}^i(g)\|^{(m)}&\leq& M\delta_1(R,\gamma)|\|f-g\|^{(m)}
\end{eqnarray}
where $M$ is defined by (23), and
$$\delta_1(R,\gamma)=B(R,\gamma)+\gamma 2^{m+1} H_1^B(R,\gamma)+\delta_2(R,\gamma)$$
where
\begin{eqnarray}
&&\delta_2(R,\gamma)\nonumber\\
&=&\sum_{p=0}^{m-1}\frac{6^{m-p}2^{p+1}}{(m-p)!}R^{m-p}\bigg\{ A(R,\gamma)+\gamma2^{m+1}H_1^A[R,\gamma]+(2R)^\alpha H_\alpha^A[R,\gamma]\bigg\{1+2\sum_{l=0}^{m-1}2^l
 (12^mR^{m-l-1}\gamma)^\alpha\bigg\}\bigg\}\nonumber\\
&+&(2R)^\alpha H_\alpha^B[R,\gamma]\bigg\{1+2\sum_{l=0}^{m-1}2^l
 (12^mR^{m-l-1}\gamma)^\alpha\bigg\}.
\end{eqnarray}
Here we want to point out properties of $\delta_2(R,\gamma)$. For each given $\gamma>0$, we have $\lim_{R\to 0} \delta_2(R,\gamma)=0$.
Similarly, For each given $\gamma>0$, we have $\lim_{R\to 0} \delta_1(R,\gamma)=0$.
We write $M=2^{\frac{m(m-1)}{2}}(C_1m+C_0)^m+O(R^\alpha)$ where $O(R^\alpha)$ depends only on $m,\alpha$.
Now we have thus shown that
\begin{eqnarray}
\|\mathbf{\omega}^i(f)-\mathbf{\omega}^i(g)\|^{(m)}\leq\{2^{\frac{m(m-1)}{2}}(C_1m+C_0)^m(B(R,\gamma)+\gamma 2^{m+1} H_1^B[R,\gamma])+\delta_3(R,\gamma)\}\|f-g\|^{(m)}
\end{eqnarray}
where
\begin{eqnarray}
\delta_3(R,\gamma)=\delta_2(R,\gamma)2^{\frac{m(m-1)}{2}}(C_1m+C_0)^m+O(R^\alpha)(B(R,\gamma)+\gamma 2^{m+1} H_1^B[R,\gamma]+\delta_2(R,\gamma)).
\end{eqnarray}
Here we want to point out $\delta_3(R,\gamma)$ has the similar properties. In fact, For each given $\gamma>0$, we have $\lim_{R\to 0} \delta_3(R,\gamma)=0$.
\subsubsection{Estimate of $I_{10}$}
The following lemmas are needed before we can estimate $I_{10}$. First we assume $^0Tf=f, ^1Tf=Tf$.
\begin{lem}
Let $f\in C^\alpha(D)$, and let $\mu+\nu=m, k+l=m, k\not=\mu, l\not=\nu$. Then if $l>\nu$, it holds
$$\partial^k\bar\partial^l(\T^\nu\overline T^\mu f)=^{l-\nu+1}\overline T(\overline T^{\mu-k-1} f),$$
if $l<\nu$, it holds
$$\partial^k\bar\partial^l(\T^\nu\overline T^\mu f)=^{k+1}T(T^{\nu-l-1}(\overline T^\mu f)).$$
\end{lem}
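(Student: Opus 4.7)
The plan is to use the three cancellation identities $\bar\partial T g=g$ (Lemma 3.1/3.3), $\partial\bar T g=g$ (its conjugate), and the higher-order relations $\partial^j T g=\,^{j+1}Tg$ from Theorem 3.7 together with its conjugate $\bar\partial^j\bar Tg=\,^{j+1}\bar Tg$, combined with two commutativity facts: $\partial$ and $\bar\partial$ commute on smooth functions, and $T$ and $\bar T$ commute as integral operators (Fubini). Throughout, Lemma 3.3 (and its $\bar T$ analogue) guarantees that $T^\nu\bar T^\mu f\in C^{m+\alpha}(D)$ when $f\in C^\alpha(D)$, so every differentiation step below makes sense.

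I would first establish the identity $\partial\bar T g=g$ for $g\in C^\alpha(D)$, by writing $\bar Tg=\overline{T\bar g}$ and using $\partial\overline{F}=\overline{\bar\partial F}$ together with Lemma 3.1. Similarly the conjugate form of Theorem 3.7 gives $\bar\partial^j(\bar T h)=\,^{j+1}\bar Th$ whenever $h\in C^{j-1+\alpha}(D)$, since $^{j+1}\bar T h=\overline{^{j+1}T\bar h}=\overline{\partial^j T\bar h}=\bar\partial^j\overline{T\bar h}=\bar\partial^j\bar T h$.

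For the case $l>\nu$ (equivalently $k\le\mu-1$), I proceed as follows. Step 1: Commute using Fubini, $T^\nu\bar T^\mu f=\bar T^\mu T^\nu f$. Step 2: Since $k<\mu$, apply $\partial^k$ and use $\partial\bar T=\mathrm{Id}$ iteratively to obtain
\[
\partial^k\bar\partial^l(T^\nu\bar T^\mu f)=\bar\partial^l\partial^k(\bar T^\mu T^\nu f)=\bar\partial^l\bigl(T^\nu\bar T^{\mu-k}f\bigr).
\]
Step 3: Apply $\bar\partial^\nu$ to annihilate $T^\nu$ via Theorem 3.7, leaving $\bar\partial^{l-\nu}(\bar T^{\mu-k}f)$. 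Step 4: Split $\bar T^{\mu-k}f=\bar T(\bar T^{\mu-k-1}f)$ and note $\bar T^{\mu-k-1}f\in C^{\mu-k-1+\alpha}(D)$; since $\mu+\nu=k+l$, the regularity exponent $\mu-k-1=l-\nu-1$ is exactly what the conjugate of Theorem 3.7 requires in order to conclude
\[
\bar\partial^{l-\nu}\bar T(\bar T^{\mu-k-1}f)=\,^{l-\nu+1}\bar T(\bar T^{\mu-k-1}f).
\]
Chaining these four steps yields the claimed formula.

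The case $l<\nu$ is completely symmetric under the involution $(T,\partial)\leftrightarrow(\bar T,\bar\partial)$: first commute $T^\nu\bar T^\mu f=T^\nu(\bar T^\mu f)$, then apply $\bar\partial^l$ and use $\bar\partial T=\mathrm{Id}$ to peel off $l$ copies of $T$, then apply $\partial^k$ splitting as $T^{\nu-l}(\bar T^\mu f)=T\bigl(T^{\nu-l-1}(\bar T^\mu f)\bigr)$ and invoke Theorem 3.7 directly to convert $\partial^k T$ into $\,^{k+1}T$; the smoothness $T^{\nu-l-1}(\bar T^\mu f)\in C^{m-l-1+\alpha}=C^{k-1+\alpha}$ is exactly what Theorem 3.7 demands. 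There is no real obstacle — the only delicate point is bookkeeping of regularity indices, ensuring at each stage that the intermediate function lies in the space required for the next invocation of Theorem 3.7 (or its conjugate); the identity $k+l=\mu+\nu=m$ makes this work with equality and no waste.
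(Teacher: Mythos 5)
Your argument for the case $l<\nu$ is correct and coincides with the paper's; it uses only $\bar\partial T=\mathrm{Id}$, commutativity of $\partial$ and $\bar\partial$, and Theorem~3.7, with the regularity bookkeeping exactly as you describe.

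The case $l>\nu$, however, contains a genuine gap: your Step~1, the claim $T^\nu\bar T^\mu f=\bar T^\mu T^\nu f$, is false. The operators $T$ and $\bar T$ do not commute. Fubini lets you exchange the order of iterated integration, but it does not identify the kernel $\frac{1}{(\zeta-z)(\bar\xi-\bar\zeta)}$ of $T\bar T$ with the kernel $\frac{1}{(\bar\zeta-\bar z)(\xi-\zeta)}$ of $\bar T T$. Concretely, take $D=\{|z|\leq R\}$ and $f(z)=z$. Using Corollary~3.6 one computes $Tf(z)=z\bar z-R^2$ and $\bar T f(z)=\tfrac12 z^2$, and then
$$T\bar T f(z)=\tfrac12 z^2\bar z-\tfrac12 R^2 z,\qquad \bar T T f(z)=\tfrac12 z^2\bar z-R^2 z,$$
so $T\bar T f-\bar T T f=\tfrac12 R^2 z\neq 0$. (The difference is annihilated by $\partial\bar\partial$, but the operators are not equal, and your argument relies on the pointwise identity.) Step~2 invokes the same false commutativity a second time when it replaces $\bar T^{\mu-k}T^\nu f$ by $T^\nu\bar T^{\mu-k}f$.

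The paper avoids the issue entirely by \emph{not} trying to move $\partial^k$ past the outer $T^\nu$. Instead it writes $\bar\partial^l=\bar\partial^{l-\nu}\bar\partial^\nu$ and applies $\bar\partial^\nu$ first, which cancels the outer $T^\nu$ exactly since $\bar\partial Tg=g$, leaving $\partial^k\bar\partial^{l-\nu}\bar T^\mu f$; then $\partial^k$ cancels $\bar T^k$ in the decomposition $\bar T^\mu=\bar T^k\bar T^{\mu-k}$, and finally the conjugate of Theorem~3.7 gives $\bar\partial^{l-\nu}\bar T(\bar T^{\mu-k-1}f)={}^{l-\nu+1}\bar T(\bar T^{\mu-k-1}f)$, using $\mu-k=l-\nu$. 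To repair your proof, peel off the $T$'s with $\bar\partial^\nu$ before peeling off the $\bar T$'s with $\partial^k$; no transposition of $T$ past $\bar T$ is needed.
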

\begin{proof}
If $l>\nu$, then $k<\mu$. Thus we have
\begin{eqnarray}
\partial^k\bar\partial^l(\T^\nu\overline T^\mu f)&=& \partial^k\bar\partial^{l-\nu}\bar\partial^\nu(T^\nu\overline T^\mu f)\nonumber\\
&=&\partial^k\bar\partial^{l-\nu}\overline T^\mu f\nonumber\\
&=&\bar\partial^{l-\nu}\partial^k\overline T^\mu f\nonumber\\
&=&\bar\partial^{l-\nu}\partial^k(\overline T^k\overline T^{\mu-k}f)\nonumber\\
&=&\bar\partial^{l-\nu}\overline T^{\mu-k} f\nonumber\\
&=&\bar\partial^{l-\nu}\overline T(\overline T^{\mu-k-1} f)\nonumber\\
&=&^{l-\nu+1}\overline T(\overline T^{\mu-k-1} f).\nonumber
\end{eqnarray}
The proof of the other case is similar. In fact,
\begin{eqnarray}
\partial^k\bar\partial^l(T^\nu\overline T^\mu f)&=&\partial^k\bar\partial^l(T^lT^{\nu-l}\overline T^\mu f)\nonumber\\
&=&\partial^k(T^{\nu-l}\overline T^\mu f)\nonumber\\
&=&\partial^kT(T^{\nu-l-1}\overline T^\mu f)\nonumber\\
&=&^{k+1}T(T^{\nu-l}\overline T^\mu f).\nonumber
\end{eqnarray}
\end{proof}
\begin{lem}
Let $h\in C^\alpha(D)$. If $k+l=m, \mu+\nu=m$, then
$$|\partial^k\bar\partial^l T^\nu\overline T^\mu(h)|\leq (C_1m+O(R^\alpha))\|h\|$$
where $O(R^\alpha)$ only depends on $m, \alpha$.
\end{lem}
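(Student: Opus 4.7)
My strategy is to peel off the outer derivatives using Lemma 3.17, reducing the target quantity to a single high-order integral operator $^{p}T$ or $^{p}\overline{T}$ (with $p \leq m$) acting on an iterated $T$ or $\overline{T}$ transform of $h$. Three cases arise. When $(k,l) = (\mu,\nu)$, repeated use of $\bar\partial T = \mathrm{Id}$ and $\partial \overline{T} = \mathrm{Id}$ (Lemma 3.1) collapses $\partial^\mu\bar\partial^\nu T^\nu \overline{T}^\mu h$ to $h$ itself, and the trivial bound $|h| \leq \|h\|$ fits the statement since $C_1 m \geq 1$. In the remaining cases Lemma 3.17 yields, for $l > \nu$,
$$\partial^k\bar\partial^l T^\nu\overline{T}^\mu h \;=\; {}^{l-\nu+1}\overline{T}\bigl(\overline{T}^{\mu-k-1}h\bigr),$$
while the subcase $l < \nu$ is symmetric with $T$ and $\overline{T}$ (and $\partial$ and $\bar\partial$) interchanged, so I concentrate on the displayed case.

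Set $p = l-\nu+1 \leq m$ and $g = \overline{T}^{\mu-k-1}h$, which lies in $C^{p-2+\alpha}(D)$ by the regularity portion of Theorem 3.7. The conjugate of Lemma 3.9 yields
$$\bigl|{}^{p}\overline{T}g\bigr| \;\leq\; C_1(p-1)R^\alpha \sum_{i+j=p-2} H_\alpha\bigl[\partial^i\bar\partial^j g\bigr],$$
with $C_1(p-1)\leq C_1 m$. To bound each inner term I use the identity $\partial\overline{T} = \mathrm{Id}$ to cancel $\partial^i$ against $i$ of the outer $\overline{T}$'s, which (since $i + j = \mu-k-1$) leaves $\bar\partial^j\overline{T}^j h$. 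For $j = 0$ this is exactly $h$, so the contribution is $H_\alpha[h]$; for $j \geq 1$ the conjugate of Theorem 3.7 identifies this derivative with $^{j+1}\overline{T}(\overline{T}^{j-1}h)$, whose H\"older norm is controlled, via Theorem 3.13 applied recursively to the remaining $\overline{T}^{j-1}h$, by a constant $C(m,\alpha)$ times $H_\alpha[h]$. Combining and invoking $(2R)^\alpha H_\alpha[h] \leq \|h\|$, the $j = 0$ branch supplies the dominant $C_1 m\|h\|$ term, while the $j \geq 1$ branches contribute only through the outer $R^\alpha$ factor and collapse into the $O(R^\alpha)\|h\|$ remainder depending only on $m$ and $\alpha$.

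The principal technical obstacle lies in the bookkeeping of the recursion controlling $H_\alpha[\bar\partial^j\overline{T}^j h]$: each application of Theorem 3.13 introduces a new factor involving $C_0$ and $C_2 R^\alpha$, and these must be consolidated cleanly into a single constant depending only on $m$ and $\alpha$. Here Lemma 3.11 (which decomposes $^{j+1}\overline{T}$ into $^{2}\overline{T}(\bar\partial^{j-1}\cdot)$ plus lower-order boundary terms through $\overline{S}_b$, bounded via Lemma 3.12) will be the most efficient tool, allowing the nested $\overline{T}$ operations to be unwound one level at a time and the constants to be packaged into the claimed form $C_1 m + O(R^\alpha)$.
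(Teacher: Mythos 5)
Your proof tracks the paper's own argument step for step: collapse the outer derivatives to a single ${}^{p}\overline T$ (or ${}^pT$) via the derivative-formula lemma, invoke the sup-norm bound $|{}^{p}\overline T g| \le C_1(p-1)R^\alpha \sum_{i+j=p-2} H_\alpha[\partial^i\bar\partial^j g]$, commute $\partial^i$ past the outer $\overline T$'s using $\partial\overline T=\mathrm{Id}$ to reduce each inner term to $H_\alpha[\bar\partial^j\overline T^j h]$, and recurse. (Your lemma references are consistently off by one --- the paper numbers an auxiliary lemma nested inside the proof of the differentiation theorem --- but the intended tools are clear.)

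The gap is in the final packaging. When you bound $H_\alpha\bigl[{}^{j+1}\overline T(\overline T^{j-1}h)\bigr]$ using the H\"older estimate for ${}^{k+2}T$ (namely $H_\alpha[{}^{k+2}Tf]\le (C_0+kC_2R^\alpha)\sum_{i+j=k} H_\alpha[\partial^i\bar\partial^j f]$, or equivalently via the decomposition ${}^{k+2}T={}^{2}T(\partial^k\cdot)-\sum_i\partial^i S_b(\partial^{k-i}\cdot)$ and the $H_\alpha[{}^2Tf]\le C_0H_\alpha[f]$ estimate), the leading coefficient $C_0=\tfrac{12}{\alpha(1-\alpha)}$ carries \emph{no} factor of $R^\alpha$. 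Iterating, the $j\ge 1$ branches therefore contribute a quantity comparable to $C(m,C_0,C_2)H_\alpha[h]$ with $\lim_{R\to0}C(m,C_0,C_2)>0$, not $o(1)\cdot H_\alpha[h]$. The single outer factor $C_1mR^\alpha$ is already consumed converting $H_\alpha[h]$ to $\|h\|$ through $(2R)^\alpha H_\alpha[h]\le\|h\|$; it cannot simultaneously make the $j\ge1$ contributions $O(R^\alpha)$. Your argument thus actually yields $|\partial^k\bar\partial^l T^\nu\overline T^\mu h|\le C(m,\alpha)\|h\|$ with $C(m,\alpha)$ bounded as $R\to0$ but, for $m\ge2$, strictly larger than $C_1m$ (since $C_0\gg C_1$). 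The paper's own proof hides the same flaw: its recursive step writes $H_\alpha[{}^{l-\nu-i}\overline T(\cdot)]\le C_1mR^\alpha\sum H_\alpha[\cdots]$, which is the sup-norm bound misapplied to a H\"older seminorm, where the valid seminorm bound has constant $C_0+O(R^\alpha)$. The weaker conclusion $C(m,\alpha)\|h\|$ does suffice for the later estimates of $I_{10}$, $I_{12}$, since those need only a constant depending on $m$ and $\alpha$, but the specific form $C_1m+O(R^\alpha)$ claimed in the lemma is not reached by this route.
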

\begin{proof}
We first note that if $l=\nu$, then $k=\mu$, and then $\partial^k\bar\partial^l T^\nu\overline T^\mu(h)=h$. Now if $l>\nu$, then by Lemma 4.4, noting $l-\nu=\mu-k$,
$$\partial^k\bar\partial^l(\T^\nu\overline T^\mu h)=^{l-\nu+1}\overline T(\overline T^{l-\nu-1} h).$$
Hence we have, by Lemma 3.9,
\begin{eqnarray}
|^{l-\nu+1}\overline T(\overline T^{l-\nu-1} h)|
\leq C_1mR^\alpha\sum_{i+j=l-\nu-1}H_\alpha[\partial^i\bar\partial^j\overline T^{l-\nu-1}h].\nonumber
\end{eqnarray}
Now we estimate, using Lemma 3.9, 4.4 repeatedly,
\begin{eqnarray}
&&\sum_{i+j=l-\nu-1}H_\alpha[\partial^i\bar\partial^j\overline T^{l-\nu-1}h]\nonumber\\
&=&\sum_{0\leq i\leq l-\nu-1}H_\alpha[\bar\partial^{l-\nu-1-i}\partial^i\overline T^{l-\nu-1}h]\nonumber\\
&=&\sum_{0\leq i< l-\nu-1}H_\alpha[\bar\partial^{l-\nu-1-i}\partial^i\overline T^{l-\nu-1}h]+H_\alpha[h]\nonumber\\
&=&\sum_{0\leq i< l-\nu-1}H_\alpha[\bar\partial^{l-\nu-1-i}\overline T^{l-\nu-1-i}h]+H_\alpha[h]\nonumber\\
&=&\sum_{0\leq i< l-\nu-1}H_\alpha[^{l-\nu-i}\overline T(\overline T^{l-\nu-2-i}h)]+H_\alpha[h]\nonumber\\
&\leq& C_1mR^\alpha \sum_{0\leq i< l-\nu-1}\sum_{p+q=l-\nu-2-i}H_\alpha[\partial^p\bar\partial^q(\overline T^{l-\nu-2-i}h)]+H_\alpha[h]
\end{eqnarray}
Therefore it holds
$$|^{l-\nu+1}\overline T(\overline T^{l-\nu-1} h)|\leq C_1mR^\alpha H_\alpha[h]+(C_1mR^\alpha)^2\sum_{0\leq i< l-\nu-1}\sum_{p+q=l-\nu-2-i}H_\alpha[\partial^p\bar\partial^q(\overline T^{l-\nu-2-i}h)]$$
Now we can apply equation (38) to above repeatedly until we get term $H_\alpha[h]$. This proves the lemma in this case.
Now we estimate the case $l<\nu$. This case is more complicated.  We have $k>\mu, $ and $\nu-l=k-\mu.$ By Lemma 4.4, we have
$$\partial^k\bar\partial^l(\T^\nu\overline T^\mu h)=^{k+1}T(T^{\nu-l-1}(\overline T^\mu h))=^{k+1}T(T^{k-\mu-1}(\overline T^\mu h)).$$
We have then, using Lemma 3.9
$$|^{k+1}T(T^{k-\mu-1}(\overline T^\mu h))|\leq
C_1mR^\alpha\sum_{i+j=k-1}H_\alpha[\partial^i\bar\partial^j(T^{k-\mu-1}(\overline T^\mu h))].$$
On the other hand, we have
\begin{eqnarray}
&&\sum_{i+j=k-1}H_\alpha[\partial^i\bar\partial^j(T^{k-\mu-1}(\overline T^\mu h))]\nonumber\\
&=&\sum_{0\leq j\leq k-1}H_\alpha[\partial^{k-1-j}\bar\partial^j(T^{k-\mu-1}(\overline T^\mu h))]\nonumber\\
&=&\sum_{0\leq j<k-\mu-1}H_\alpha[\partial^{k-1-j}\bar\partial^j(T^{k-\mu-1}(\overline T^\mu h))]+H_\alpha[h]
+\sum_{k-\mu-1< j\leq k-1}H_\alpha[\partial^{k-1-j}\bar\partial^j(T^{k-\mu-1}(\overline T^\mu h))]\nonumber\\
&=&\sum_{0\leq j<k-\mu-1}H_\alpha[\partial^{k-1-j}(T^{k-\mu-1-j}(\overline T^\mu h))]+H_\alpha[h]
+\sum_{k-\mu-1< j\leq k-1}H_\alpha[\partial^{k-1-j}\bar\partial^{j-k+\mu+1}((\overline T^\mu h)]\nonumber\\
&=&\sum_{0\leq j<k-\mu-1}H_\alpha[\partial^{k-1-j}T(T^{k-\mu-2-j}(\overline T^\mu h))]+H_\alpha[h]
+\sum_{k-\mu-1< j\leq k-1}H_\alpha[\bar\partial^{j-k+\mu+1}(\overline T^{\mu-k+1+j} h)]\nonumber\\
&=&\sum_{0\leq j<k-\mu-1}H_\alpha[^{k-j}T(T^{k-j-2-\mu}(\overline T^\mu h))]+H_\alpha[h]
+\sum_{k-\mu-1< j\leq k-1}H_\alpha[^{j-k+\mu+2}\overline T(\overline T^{\mu-k+j} h)]\nonumber
\end{eqnarray}
Using Lemma 3.9, we have
\begin{eqnarray}
&&\sum_{i+j=k-1}H_\alpha[\partial^i\bar\partial^j(T^{k-\mu-1}(\overline T^\mu h))]\nonumber\\
&\leq& C_1mR^\alpha\bigg\{\sum_{0\leq j<k-\mu-1}\sum_{p+q=k-j-2}H_\alpha[\partial^p\bar\partial^q(T^{k-\mu-2-j}(\overline T^\mu h))]\nonumber\\
&+&\sum_{k-\alpha-1< j\leq k-1}\sum_{p+q=\mu-k+j}H_\alpha[\partial^p\bar\partial^q(\overline T^{\mu-k+j} h)]\bigg\}+H_\alpha[h].\nonumber
\end{eqnarray}
Hence we have
\begin{eqnarray}
|^{k+1}T(T^{k-\mu-1}(\overline T^\mu h))|
&\leq& C_1mR^\alpha H_\alpha[h]\nonumber\\
&+&(C_1mR^\alpha)^2\bigg\{\sum_{0\leq j<k-\mu-1}\sum_{p+q=k-j-2}H_\alpha[\partial^p\bar\partial^q(T^{k-\mu-2-j}(\overline T^\mu h))]\nonumber\\
&+&\sum_{k-\mu-1< j\leq k-1}\sum_{p+q=\mu-k+j}H_\alpha[\partial^p\bar\partial^q(\overline T^{\mu-k+j} h)]\bigg\}.
\end{eqnarray}
In order to finish off the proof, we need to use equations (38), (39) repeatedly to get the desired result.
\end{proof}
Now we are ready to estimate $I_{10}$. Indeed,
\begin{eqnarray}
I_{10}&=&\bigg|\sum_{k+l=m}\{[\partial^k\bar\partial^l \omega^i(f)](0)
-[\partial^k\bar\partial^l \omega^i(g)](0)\}\bigg|\nonumber\\
&\leq&\sum_{k+l=m}\bigg|[\partial^k\bar\partial^l \omega^i(f)](0)
-[\partial^k\bar\partial^l \omega^i(g)](0)\bigg|\nonumber\\
&=&\sum_{k+l=m}\bigg|[\partial^k\bar\partial^l T^\nu\overline T^\mu (a^i(f)-a^i(g))](0)\bigg|\nonumber\\
&\leq&\sum_{k+l=m}\bigg|\partial^k\bar\partial^l T^\nu\overline T^\mu (a^i(f)-a^i(g))\bigg|\nonumber\\
&\leq& 2^m(C_1m+O(R^\alpha))\|a^i(f)-a^i(g)\|\nonumber\\
&\leq&2^m(C_1m+O(R^\alpha))\delta_1(R,\gamma)\|f-g\|^{(m)}\nonumber\\
&\leq& (m2^mC_1(B(R,\gamma)+\gamma 2^{m+1} H_1^B[R,\gamma])+\delta_4(R,\gamma))\|f-g\|^{(m)}
\end{eqnarray}
where
$$\delta_4(R,\gamma)=\delta_2(R,\gamma)2^mC_1m+O(R^\alpha)(B(R,\gamma)+\gamma 2^{m+1} H_1^B[R,\gamma]+\delta_2(R,\gamma))$$
and where we denote $a^i(f)=a^i(z, f, \mathcal{D}^1 f, ...,\mathcal{D}^{m-1} f, \mathcal{D}^m f)$.
Once again we have $\lim_{R\to0}\delta_4(R,\gamma)=0$ for each $\gamma>0$.

\subsubsection{Estimate of $I_{11}$}
In order to estimate $I_{11}$, we begin with
\begin{eqnarray}
&&a^i(\zeta, \mathcal{D}^0f(\zeta), \mathcal{D}^1 f(\zeta), ...,\mathcal{D}^{m-1} f(\zeta), \mathcal{D}^m f(\zeta))-a^i(0,...,0)\nonumber\\
&=&\int_0^1\frac{d}{dt}a^i(t\zeta, tf(\zeta),..., t\mathcal{D}^{m-1} f(\zeta), t\mathcal{D}^m f(\zeta))dt\nonumber\\
&=&C^i\zeta+\bar C^i\bar\zeta+\sum_{j=0}^n\sum_{p=0}^{m-1}\sum_{k+l=p}A^{p,j}_{k,l}\partial^k\bar\partial^l(f_j)+\bar A^{p,j}_{k,l}\overline{\partial^k\bar\partial^l(f_j)}\nonumber\\
&+&\sum_{j=0}^n\sum_{k+l=m}B^{m,j}_{k,l}\partial^k\bar\partial^l(f_j)+\bar B^{m,j}_{k,l}\overline{\partial^k\bar\partial^l(f_j)}
\end{eqnarray}
where we have
$$C^i=\int_0^1\frac{\partial a^i}{\partial\zeta}(t\zeta, tf(\zeta),..., t\mathcal{D}^{m-1} f(\zeta), t\mathcal{D}^m f(\zeta))dt,$$
$$\bar C^i=\int_0^1\frac{\partial a^i}{\partial\bar\zeta}(t\zeta, tf(\zeta),..., t\mathcal{D}^{m-1} f(\zeta), t\mathcal{D}^m f(\zeta))dt,$$
$$A^{p,j}_{k,l}=\int_0^1\frac{\partial a^i}{\partial \eta_{p,j}^{k,l}}(t\zeta, tf(\zeta),..., t\mathcal{D}^{m-1} f(\zeta), t\mathcal{D}^m f(\zeta))dt,$$
$$\bar A^{p,j}_{k,l}=\int_0^1\frac{\partial a^i}{\bar\partial \eta_{p,j}^{k,l}}(t\zeta, tf(\zeta),..., t\mathcal{D}^{m-1} f(\zeta), t\mathcal{D}^m f(\zeta))dt,$$
$$B^{m,j}_{k,l}=\int_0^1\frac{\partial a^i}{\partial \eta_{m,j}^{k,l}}(t\zeta, tf(\zeta),..., t\mathcal{D}^{m-1} f(\zeta), t\mathcal{D}^m f(\zeta))dt,$$
$$\bar B^{m,j}_{k,l}=\int_0^1\frac{\partial a^i}{\bar\partial \eta_{m,j}^{k,l}}(t\zeta, tf(\zeta),..., t\mathcal{D}^{m-1} f(\zeta), t\mathcal{D}^m f(\zeta))dt.$$
Here we notice that these functions are slightly different from those in (24) in arguments but similar, but we abuse the notations anyway, hoping causing no confusion.
Taking norms on (41), we have

\begin{eqnarray}
&&\|a^i(\zeta, \mathcal{D}^0f(\zeta), \mathcal{D}^1 f(\zeta), ...,\mathcal{D}^{m-1} f(\zeta), \mathcal{D}^m f(\zeta))\|\leq |a^i(0,...,0)|\nonumber\\
&+&\|C^i\|\|\zeta\|+\|\bar C^i\|\|\bar\zeta\|+\sum_{j=0}^n\sum_{p=0}^{m-1}\|\sum_{k+l=p}A^{p,j}_{k,l}\|\|\partial^k\bar\partial^l(f_j)\|+\|\bar A^{p,j}_{k,l}\|\|\overline{\partial^k\bar\partial^l(f_j)}\|\nonumber\\
&+&\sum_{j=0}^n\sum_{k+l=m}\|B^{m,j}_{k,l}\|\|\partial^k\bar\partial^l(f_j)\|+\|\bar B^{m,j}_{k,l}\|\|\overline{\partial^k\bar\partial^l(f_j)}\|+\nonumber\\
&\leq&3R(\|C^i\|+\|\bar C^i\|)+\sum_{p=0}^{m-1}\sum_{k+l=p}\{\|A^{p,j}_{k,l}\|+\|\bar A^{p,j}_{k,l}\|\}\|{\partial^k\bar\partial^l f}\|\nonumber\\
&+&\sum_{k+l=m}\{\|B^{m,j}_{k,l}\|+\|\bar B^{m,j}_{k,l}\|\}\|{\partial^k\bar\partial^l f}\|+|a^i(0,...,0)|\nonumber\\
&\leq&3R(\|C^i\|+\|\bar C^i\|)+\sum_{p=0}^{m-1}\sum_{k+l=p}\{\|A^{p,j}_{k,l}\|+\|\bar A^{p,j}_{k,l}\|\}\frac{6^{m-p}}{(m-p)!}R^{m-p}\|f\|^{(m)}\nonumber\\
&+&\sum_{k+l=m}\{\|B^{m,j}_{k,l}\|+\|\bar B^{m,j}_{k,l}\|\}\|f\|^{(m)}+|a^i(0,...,0)|\nonumber\\
&\leq& |a^i(0,...,0)|+3R(\|C^i\|+\|\bar C^i\|)+\bigg\{\sum_{p=0}^{m-1}\frac{6^{m-p}}{(m-p)!}R^{m-p}\sum_{k+l=p}\{\|A^{p,j}_{k,l}\|+\|\bar A^{p,j}_{k,l}\|\}\nonumber\\
&+&\sum_{k+l=m}\{\|B^{m,j}_{k,l}\|+\|\bar B^{m,j}_{k,l}\|\}\bigg\}\|f\|^{(m)}.
\end{eqnarray}
Using the identical arguments as for (30), (31),(32), we have
\begin{eqnarray}
\|C^i\|&\leq& C(R,\gamma)+(2R)^\alpha H_\alpha^C[R,\gamma]\bigg\{1+\sum_{l=0}^{m-1}2^l
 (12^m R^{m-1-1}\gamma)^\alpha \bigg\}
 + \gamma 2^m H_1^C(R,\gamma)\nonumber\\
 \|A^{p,j}_{k,l}\|&\leq& A(R,\gamma)+(2R)^\alpha H_\alpha^A[R,\gamma]\bigg\{1+\sum_{l=0}^{m-1}2^l
 (12^m R^{m-1-1}\gamma)^\alpha \bigg\}
 + \gamma 2^m H_1^A(R,\gamma)\nonumber\\
 \|B^{p,j}_{k,l}\|&\leq& B(R,\gamma)+(2R)^\alpha H_\alpha^B[R,\gamma]\bigg\{1+\sum_{l=0}^{m-1}2^l
 (12^m R^{m-1-1}\gamma)^\alpha \bigg\}
 + \gamma 2^m H_1^B(R,\gamma).
\end{eqnarray}
Of course, $\|\bar C^i\|,\|\bar A^{p,j}_{k,l}\|, \|\bar B^{p,j}_{k,l}\|$ will satisfy the same estimate respectively. Similar to (33), we have
$$\|a^i(f)\|\leq |a^i(0)|+\delta_5(R,\gamma)$$
where $\delta_5(R,\gamma)$ is defined as
$$\delta_5(R,\gamma)=6R\bigg(C(R,\gamma)+(2R)^\alpha H_\alpha^C[R,\gamma]\{1+\sum_{l=0}^{m-1}2^l
 (12^m R^{m-1-1}\gamma)^\alpha\}+\gamma 2^m H_1^C[R,\gamma]\bigg)+$$
 $$\gamma\bigg(\sum_{p=0}^{m-1}\frac{6^{m-p}}{(m-p)!}R^{m-p} 2^p( A(R,\gamma)+(2R)^\alpha H_\alpha^A[R,\gamma]\{1+\sum_{l=0}^{m-1}2^l
 (12^m R^{m-1-1}\gamma)^\alpha\}+\gamma 2^m H_1^A[R,\gamma])+$$
 $$B(R,\gamma)+(2R)^\alpha H_\alpha^B[R,\gamma]\{1+\sum_{l=0}^{m-1}2^l
 (12^m R^{m-1-1}\gamma)^\alpha\}+ \gamma 2^m H_1^B[R,\gamma]\bigg).$$
 However, we will replace $\delta_5(R,\gamma)$ by
 $$\delta_5(R,\gamma)=6R\bigg(C(R,\gamma)+(2R)^\alpha H_\alpha^C[R,\gamma]\{1+\sum_{l=0}^{m-1}2^l
 (12^m R^{m-1-1}\gamma)^\alpha\}+\gamma 2^m H_1^C[R,\gamma]\bigg)+\gamma\delta_1(R,\gamma).$$
 We remark that $\lim_{R\to0}\delta_5(R,\gamma)=0$.
It follows
\begin{eqnarray}
\|\omega^i(f)\|^{(m)}&\leq& M\|a^i(f)\|\nonumber\\
&\leq& M(|a^i(0)|+\delta_5(R,\gamma))\nonumber\\
&\leq& (2^{\frac{m(m-1)}{2}}(C_1m+C_0)^m+O(R^\alpha))(|a^i(0)|+\delta_5(R,\gamma))
\end{eqnarray}
where we denote $a^i(f)=a^i(z, f, \mathcal{D}^1 f, ...,\mathcal{D}^{m-1} f, \mathcal{D}^m f)$.

\subsubsection{Estimate of $I_{12}$}

\begin{eqnarray}
I_{12}&=&\bigg|\sum_{k+l=m}[\partial^k\bar\partial^l\omega^i(f)](0)\bigg|\nonumber\\
&\leq &\sum_{k+l=m}\bigg|[\partial^k\bar\partial^l\omega^i(f)](0)\bigg|\nonumber\\
&=&\sum_{k+l=m}\bigg|[\partial^k\bar\partial^l T^\nu\overline T^\mu (a^i(f))](0)\bigg|\nonumber\\
&\leq&\sum_{k+l=m}\bigg|\partial^k\bar\partial^l T^\nu\overline T^\mu (a^i(f))\bigg|\nonumber\\
&\leq& 2^m(C_1m+O(R^\alpha))\|a^i(f)\|\nonumber\\
&\leq&2^m(C_1m+O(R^\alpha))(|a^i(0)|+\delta_5(R,\gamma))
\end{eqnarray}

\subsection{A general estimate}
Here we collect the estimates together for later quick reference.
\begin{thm}
Let $\mathbf{\Theta}:\mathbf{B}(R)\to \mathbf{B}(R)$ be defined as in (19) and (20). Then if $f,g\in\mathbf{A}(R,\gamma)$ then
\begin{eqnarray}
\|\mathbf{\Theta}(f)-\mathbf{\Theta}(g)\|^{(m)}
&\leq &\delta(R,\gamma)\|f-g\|^{(m)}\\
\|\mathbf{\Theta}(f)\|^{(m)}&\leq& \eta(R,\gamma)
\end{eqnarray}
where
\begin{eqnarray}
\delta(R,\gamma)&=&2^{\frac{m(m-1)}{2}}(C_1m+C_0)^m(B(R,\gamma)+\gamma 2^{m+1} H_1^B(R,\gamma))+\delta_3(R,\gamma)\nonumber\\
&+&m2^mC_1(B(R,\gamma)+\gamma 2^{m+1} H_1^B(R,\gamma))+\delta_4(R,\gamma)\\
\eta(R,\gamma)&=&2^{\frac{m(m-1)}{2}}(C_1m+C_0)^m+O(R^\alpha))(|a(0)|+\delta_5(R,\gamma))\nonumber\\
&+&2^m(C_1m+O(R^\alpha))(|a(0)|+\delta_5(R,\gamma)).
\end{eqnarray}
Furthermore, $\lim_{R\to0}\delta_j(R,\gamma)=0$ for each given $\gamma>0$.
\end{thm}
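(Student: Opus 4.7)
The plan is to read Theorem 4.3 as a pure bookkeeping result that assembles the four coordinate-wise estimates already produced in Sections 4.2.1--4.2.4. Since the norm on $\mathbf{B}(R)$ is defined by $\|f\|^{(m)} = \max_{1 \leq i \leq n}\|f_i\|^{(m)}$, it suffices to obtain the stated bounds for each component $\mathbf{\Theta}^i(f)$ and then take the max over $i$. The starting point is the decomposition recorded in (21), which controls $\|\mathbf{\Theta}^i(f) - \mathbf{\Theta}^i(g)\|^{(m)}$ by $I_9 + I_{10}$, together with the analogous decomposition $\|\mathbf{\Theta}^i(f)\|^{(m)} \leq I_{11} + I_{12}$ for the boundedness estimate.

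First I would assemble the Lipschitz estimate (46). Inserting the bound (36) for $I_9$ and the bound (40) for $I_{10}$ into (21), one obtains
\[
\|\mathbf{\Theta}^i(f) - \mathbf{\Theta}^i(g)\|^{(m)} \leq \delta(R,\gamma)\, \|f - g\|^{(m)},
\]
where $\delta(R,\gamma)$ is exactly the sum of the two leading terms plus $\delta_3(R,\gamma) + \delta_4(R,\gamma)$ displayed in (48). Taking the maximum over $i$ promotes this to the desired norm inequality on $\mathbf{\Theta}$. Next I would assemble the bound (47) the same way: substitute the bound (44) for $I_{11}$ and (45) for $I_{12}$ into the componentwise decomposition of $\|\mathbf{\Theta}^i(f)\|^{(m)}$. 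Adding the two inequalities produces $\eta(R,\gamma)$ as written in (49), and again the max over $i$ finishes the claim.

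It remains to verify the vanishing statement $\lim_{R \to 0}\delta_j(R,\gamma) = 0$ for $j = 3,4,5$ at each fixed $\gamma > 0$. For this I would inspect the explicit expressions: each $\delta_j$ is built from $\delta_2$ (defined in (35)) together with factors of $O(R^\alpha)$ and bounded quantities $A(R,\gamma), B(R,\gamma), C(R,\gamma)$ and their H\"older seminorms. Since every term in $\delta_2(R,\gamma)$ carries a strictly positive power of $R$ (either $R^{m-p}$ with $m-p \geq 1$ or $R^\alpha$), and since the compact set $E(R,\gamma)$ on which $A, B, C, H_\alpha^A, \ldots$ are measured is uniformly contained in a fixed polycylinder as $R \to 0$, the bounded quantities remain bounded while the prefactors tend to zero. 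The same applies to $\delta_5$ because of the $6R$ prefactor in front of the $C$-terms and the $\gamma\delta_1(R,\gamma)$ summand.

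The work of this theorem is therefore not new estimation but rigorous collation; the one point that requires attention is keeping the leading constants $2^{m(m-1)/2}(C_1 m + C_0)^m$ and $m 2^m C_1$ separated from the remainder terms absorbed into $\delta_3, \delta_4, \delta_5$, so that the subsequent application of the contraction lemma (Lemma 4.1) in later sections can be arranged by choosing $R$ small relative to the fixed leading constants. There is no serious obstacle beyond this careful indexing, since the analytic content has already been done in Sections 4.2.1--4.2.4 and in the norm estimates for $T^\nu \overline T^\mu$ from Theorem 3.15 and Lemma 4.5.
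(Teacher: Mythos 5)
Your proposal is correct and matches the paper's own treatment exactly: the paper presents Theorem 4.3 under the heading ``A general estimate'' with the explicit remark that it is collecting the estimates from the preceding subsections for quick reference, and gives no separate proof beyond pointing back to the decomposition in (21), the bounds (36) and (40) for $I_9$ and $I_{10}$, and the bounds (44) and (45) for $I_{11}$ and $I_{12}$. Your observation about the vanishing of $\delta_j(R,\gamma)$ as $R\to 0$ for fixed $\gamma$ also coincides with the remarks the paper interleaves after each of (35), (37), (40), and the definition of $\delta_5$.
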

For easy references, we collect the definitions of $\delta_j(R,\gamma)$ for $i=1,2,3,4,5$ here.
\begin{eqnarray}
\delta_1(R,\gamma)&=&B(R,\gamma)+\gamma 2^{m+1} H_1^B(R,\gamma)+\delta_2(R,\gamma)\\
\delta_2(R,\gamma)&=&\sum_{p=0}^{m-1}\frac{6^{m-p}2^{p+1}}{(m-p)!}R^{m-p}\bigg\{ A(R,\gamma)+\gamma2^{m+1}H_1^A[R,\gamma]+(2R)^\alpha H_\alpha^A[R,\gamma]\{1+\nonumber\\
&&2\sum_{l=0}^{m-1}2^l
 (12^mR^{m-l-1}\gamma)^\alpha\}\bigg\}
+(2R)^\alpha H_\alpha^B[R,\gamma]\{1+2\sum_{l=0}^{m-1}2^l
 (12^mR^{m-l-1}\gamma)^\alpha\}\\
 \delta_3(R,\gamma)&=&\delta_2(R,\gamma)2^{\frac{m(m-1)}{2}}(C_1m+C_0)^m\nonumber\\
 &+&O(R^\alpha)(B(R,\gamma)+\gamma 2^{m+1} H_1^B[R,\gamma]+\delta_2(R,\gamma))\\
 \delta_4(R,\gamma)&=&\delta_2(R,\gamma)2^mC_1m+O(R^\alpha)(B(R,\gamma)+\gamma 2^{m+1} H_1^B[R,\gamma]+\delta_2(R,\gamma))\\
 \delta_5(R,\gamma)&=&6R\bigg(C(R,\gamma)+(2R)^\alpha H_\alpha^C[R,\gamma]\{1+\sum_{l=0}^{m-1}2^l
 (12^m R^{m-1-1}\gamma)^\alpha\}\nonumber\\
 &+&\gamma 2^m H_1^C[R,\gamma]\bigg)+\gamma\delta_1(R,\gamma).
\end{eqnarray}

\section{Proofs of Theorems}
A few words on constants are in order. So far we have used constants $C_0, C_1, C_2$ in previous sections and will continue to use $C_3, C_4,...$ for constants dependent on parameters and they may vary from line to line. We also use $C(\cdot,...,\cdot)$ to emphasize the dependence of the parameters specifically, and they may vary from line to line.
\subsection{Proof of Theorem 1.2}
To start the proof of the theorem, we fix an $\alpha$ ($o<\alpha<1)$ so that $a\in C^{k-1,\alpha}$. Then we can apply the general estimates (46), (47).
First we rewrite $\delta(R,\gamma)$ in (48) as follows
$$\delta(R,\gamma)=C_3B(R,\gamma)+C_4\gamma H_1^B(R,\gamma)+\varepsilon(R,\gamma)$$
where $C_3, C_4$ are constants only depend on $m,\alpha$ and
$\varepsilon(R,\gamma)$ is such that $\lim_{R\to0} \varepsilon(R,\gamma)=0$ for each $\gamma>0$. This notation will be used again but may vary from line to line.
Second, we rewrite $\eta(R,\gamma)$ as in (49)
$$\eta(R,\gamma)=C(m,R,\alpha)|a(0)|+C_5(\gamma B(R,\gamma)+\gamma^22^{m+1} H_1^B[R,\gamma])+\varepsilon(R,\gamma)$$
where
$$C_5=2^{\frac{m(m-1)}{2}}(C_1m+C_0)^m+2^mC_1m,$$
and $C(m,R,\alpha)\to C_5$ as $R\to 0$.
We now give estimates of $B(R,\gamma), H_1^B(R,\gamma)$ in terms of conditions (1),(4).
We assume that $\xi$ is one of the coordinates of $\eta=\eta_m\in \co^{n2^m}$. Let $\sigma=\eta_{m,j}^{kl}$ be a component variable of $\eta_m$.
We want to estimate the Lipschitz constant of $\partial_{\sigma} a$.
In fact, we have
$$\partial_{\sigma} a(\cdot, \xi)-\partial_{\sigma} a(\cdot, \xi')=\int_{0}^1 \frac{d}{dt}\partial_{\sigma} a(\cdot, t\xi+(1-t)\xi')dt$$
$$=\int_0^1 \partial_{\sigma}\partial_\xi a(\cdot,t\xi+(1-t)\xi')(\xi-\xi')+\partial_{\sigma}\bar\partial_\xi a(\cdot,t\xi+(1-t)\xi')\overline{(\xi-\xi')}dt$$
Here $a(\cdot,\xi)$ is defined naturally for what $\xi$ is.
It follows that
$$H_1[\partial_{\sigma} a]|_{E(R,\gamma)}\leq  |\partial_{\sigma}\partial_\xi a|_{E(R,\gamma)}+|\partial_{\sigma}\bar\partial_\xi a|_{E(R,\gamma)}$$
$$\leq |\partial_{\sigma}\partial_\xi a(0)|+|\partial_{\sigma}\bar\partial_\xi a(0)|+o(R+\gamma)$$
where $o(R+\gamma)\to 0$ as $R,\gamma\to 0$ by continuity because of $C^2$ smoothness of $a$ and $\lim_{R,\gamma\to0} E(R,\gamma)=\{0\}$.
The estimate for $\bar\partial_{\sigma}$ is the same. So we have
$$H_1^B[R,\gamma]\leq |\partial_\eta\partial_\eta a(0)|+|\partial_\eta\bar\partial_\eta a(0)|+|\bar\partial_\eta\bar\partial_\eta a(0)|+o(R+\gamma).$$
On the other hand, we have
\begin{eqnarray}
&&\partial_\sigma a^i(\zeta,\eta_1,...,\eta_{m-1},\eta_m)-\partial_\sigma a(0)\nonumber\\
&=&\int_0^1 \frac{d}{dt}\partial_\sigma a^i(t\zeta, t\eta_1,...,t\eta_{m-1},t\eta_m)dt
=\int_0^1 \{\partial_\sigma\partial_\zeta a^i(\cdot)\zeta+\partial_\sigma\bar\partial_\zeta a(\cdot)\bar\zeta\nonumber\\
&+&\sum_{j=1}^n\sum_{p=0}^{m-1}\sum_{k+l=p}\partial_\sigma\partial_{\eta^{kl}_{p,j}} a^i(\cdot)\eta^{kl}_{p,j}+\partial_\sigma\bar\partial_{\eta^{kl}_{p,j}} a(\cdot)\bar\eta^{kl}_{p,j}\nonumber\\
&+&\sum_{j=1}^n\sum_{k+l=m}\partial_\sigma\partial_{\eta^{kl}_{m,j}} a^i(\cdot)\eta^{kl}_{m,j}+\partial_\sigma\bar\partial_{\eta^{kl}_{m,j}} a^i(\cdot)\bar\eta^{kl}_{m,j}\}dt.
\end{eqnarray}
Notice that for $(\zeta,\eta_1,...,\eta_{m-1},\eta_m)\in E(R,\gamma)$, we have $|\zeta|\leq R, |\eta_j|\leq O(R^{m-j}\gamma), j=1,...,m-1$ and $|\eta_m|\leq \gamma$.
Hence, we have by (55), putting terms with $R$ and $\gamma$ together
$$|\partial_\sigma a^i|_{E(R,\gamma)}\leq |\partial_\sigma a(0)|+ 2^m(|\partial_\eta\partial_{\eta} a(0)|+|\partial_\eta\bar\partial_{\eta} a(0)|+
|\bar\partial_\eta\bar\partial_{\eta} a(0)|)\gamma+\varepsilon(R,\gamma)$$
where $\varepsilon(R,\gamma)\to 0$ as $R\to 0$ for each given $\gamma$. This implies
$$B(R,\gamma)\leq |\partial_\sigma a(0)|+|\bar\partial_\sigma a(0)|+n2^m(|\partial_\eta\partial_{\eta} a(0)|+|\partial_\eta\bar\partial_{\eta} a(0)|+
|\bar\partial_\eta\bar\partial_{\eta} a(0)|)\gamma+\varepsilon(R,\gamma).$$
Let
$$\tau=|\partial_\sigma a(0)|+|\bar\partial_\sigma a(0)|$$
$$\kappa=|\partial_\eta\partial_{\eta} a(0)|+|\partial_\eta\bar\partial_{\eta} a(0)|+
|\bar\partial_\eta\bar\partial_{\eta} a(0)|.$$
we have
$$\delta(R,\gamma)
\leq C_3\tau+(n2^mC_3\kappa+C_4\kappa+o(R+\gamma))\gamma+\varepsilon(R,\gamma),$$
$$\eta(R,\gamma)\leq C(m,R,\alpha)|a(0)|+C_5\left(n2^{m}\kappa+o(R+\gamma)\right)\gamma+\varepsilon(R,\gamma).$$
Now we fix $\gamma$. By $o(R+\gamma)\to 0$, there exist $R_0, \gamma_0>0$ such that
$$o(R+\gamma_0)\leq \min\{n2^{m}\kappa,n2^mC_3\kappa+C_4\kappa\},\mbox{ and } C(m,R,\alpha)<2C_5$$
for $R\leq R_0$.
Hence it holds for $f, g\in \mathbf{A}(R,\gamma_0)$
$$\delta(R,\gamma_0)
\leq C_3 \tau2(n2^mC_3\kappa+C_4\kappa)\gamma_0+\varepsilon(R,\gamma_0),$$
$$\eta(R,\gamma_0)\leq C_52n2^{m}\kappa\gamma_0+\varepsilon(R,\gamma_0).$$
Let $\delta=(C_52^{m+3})^{-1}$. If $\kappa<\delta$, then
$$\delta(R,\gamma_0)
\leq 2C_5|a(0)|+\{2(n2^mC_3\delta+C_4\delta)\gamma_0+\varepsilon(R,\gamma_0)\},$$
$$\eta(R,\gamma_0)\leq \frac{\gamma_0}{4}+\varepsilon(R,\gamma_0).$$
Now further choose $\gamma_0$ so that
$$2(n2^mC_3\delta+C_4\delta)\gamma_0\leq \frac{1}{2}$$
and choose $R$ small enough that $\varepsilon(R,\gamma_0)\leq \min\{1/4,\gamma_0/8\}$. Finally we have $\delta(R,\gamma_0)\leq C_3\tau+ 3/4,\eta(R,\gamma_0)
\leq 2C_5|a(0)|+3\gamma_0/8$, and finally choose $C_3\tau<1/8, 2C_5|a(0)|<\gamma_0/8$.
This is equivalent to that for $f,g\in \mathbf{A}(R,\gamma_0)$
$$\|\mathbf{\Theta}(f)-\mathbf{\Theta}(f)\|^{(m)}\leq \frac{7}{8}\|f-g\|^{(m)},$$
$$\|\mathbf{\Theta}(f)\|^{(m)}\leq \frac{\gamma_0}{2}.$$
For the $\delta$ we can now take $\delta=\min\{(C_52^{m+3})^{-1},\frac{1}{8C_3}\}$ and $\epsilon=\frac{\gamma_0}{16C_5}$.
Now let $\psi(z)$ be a map to $\co^n$ whose component is a homogenous polynomial of degree $m$ without term $z^\mu\bar z^\nu$. And further we assume
$\|\psi\|^{(m)}\leq \gamma_0/2$. Then by Lemma 4.1, the following (integral) equation
$$u=\psi+\mathbf{\Theta}(u)$$
has solutions in $A(R,\gamma_0)$ such that $\partial^i\bar\partial^j u(0)= \partial^i\bar\partial^j\psi(0)$ for $i+j=m, i\not=\mu,j\not=\nu$.
This is equivalent to equation (3). It is obvious that the solution is of vanishing order $m$ at the origin. The proof is complete.
\subsection{Proof of Theorem 1.3}
First we consider the case $p^i(z)=0$ for $i=1,...,n$.
 Since $a$ is independent of $\eta_m$, we have $B(R,\gamma)=H_1^B[R,\gamma]=H_\alpha^B[R,\gamma]=0$. Here we note that $C^{k,\alpha} (k\geq 1, 0<\alpha<1)$ regularity of $a$ is only needed. Therefore, we can replace
the constants as follows
$$\delta(R,\gamma)=\delta_3(R,\gamma)+\delta_4(R,\gamma),$$
$$\delta_4(R,\gamma)=C(R,\gamma)\delta_2(R,\gamma)$$
where $\lim_{R\to0}\delta_j(R,\gamma)=0$ for each $\gamma$ for $j=2,3,4$.
Since $O(R^\alpha)\to 0$ as $R\to0$, we can choose $R$ so small that we can replace $\eta(R,\gamma)$ by
$$\eta(R,\gamma)=C(m,\alpha)(|a(0)|+\delta_5(R,\gamma))$$
On the other hand, we can write
$$\delta_5(R,\gamma_0)=\epsilon(R,\gamma)+\gamma\delta_1(R,\gamma)$$
where $\lim_{R\to0}\epsilon(R,\gamma)=0$ for each $\gamma$. Therefore we have
$$\eta(R,\gamma)=C(m,\alpha)(|a(0)|+\epsilon(R,\gamma)+\gamma\delta_1(R,\gamma)).$$
Now we choose $\gamma_0$ so large that $\frac{\gamma_0}{8}>C(m,\alpha)|a(0)|$. Then we choose $R$ sufficiently small that
$C(m,\alpha)\epsilon(R,\gamma_0)\leq \frac{\gamma_0}{8}$ and $C(m,\alpha)\delta_1(R,\gamma_0)\leq \frac{\gamma_0}{4}$.
As a result, we have
$$\eta(R,\gamma_0)<\frac{\gamma_0}{2}.$$
If necessary, we choose $R$ further so that
$$\delta(R,\gamma_0)<3/4.$$
Let $\psi(z)$ be a homogenous polynomial map of degree $m$ without term $\partial^\mu\bar\partial^\nu$ such that $\|\psi\|\leq \frac{\gamma_0}{2}$. Then
we can apply Lemma 4.1 to
$$u=\psi+\mathbf{\Theta}(u)$$
on $A(R,\gamma_0)$ to get the desired solutions that vanish up to order $m-1$ at the origin.

To get the general case, we consider system, $p(z)=(p^1,...,p^n),$
$$\partial^\mu\bar\partial^\nu \tilde u=a(z, \tilde u+p(z), \mathcal{D}^1\tilde u+\mathcal{D}^1p(z),..., \mathcal{D}^{m-1}\tilde u+\mathcal{D}^{m-1}p(z)).$$
This can be written as
$$\partial^\mu\bar\partial^\nu \tilde u=b(z, \tilde u, \mathcal{D}^1\tilde u,..., \mathcal{D}^{m-1}\tilde u).$$
As long as $p(0)\in \mathrm{Int}(D')$, we can solve $\tilde u$ as just proved so that $\tilde u$ vanishes up to order $m-1$ at the origin.
Then $u=\tilde u+p(z)$ solve the original equation with desired property of derivatives at the origin since $\partial^\mu\bar\partial^\nu \tilde u=
\partial^\mu\bar\partial^\nu u$. The proof is complete.

\subsection{Proof of Theorem 1.4}
Here we fix $R$, and $\gamma$ will be chosen very small to prove the existence of global solutions. Since $a$ is independent of $z$, so we have
$C(R,\gamma)=H_\alpha^C[R,\gamma]=H_1^C[R,\gamma]=0$. Therefore we can substitute $\eta(R,\gamma),\delta(R,\gamma)$ by
\begin{eqnarray}
\eta(R,\gamma)&=&C_6(|a(0)|+\gamma\delta_1(R,\gamma))\\
\delta(R,\gamma)&=&C_7B(R,\gamma)+C_8\gamma H_1^B[R,\gamma]+\delta_3(R,\gamma)+\delta_4(R,\gamma)
\end{eqnarray}
where $C_6, C_7, C_8$ are constants dependent on only $m,\alpha$, and $R$.

Since $a$ is independent of $z$ variable, $E(R,\gamma)$ can be taken as
$$E(R, \gamma)=\Pi_{k=0}^{m-1}\{z\in\co^{n2^k}||z|\leq 6^mR^{m-k}\gamma\}\times\{z\in \co^{n2^m}||z|\leq\gamma\}.$$
We notice $E(R, \gamma)$ shrinks to the origin as $\gamma\to0$ for a fixed $R$. This is important for what follows in proving the theorem.
Let $\sigma$ be one of component variables in $\{\eta_0,...,\eta_m\}$. We have
\begin{eqnarray}
&&\partial_\sigma a^i(\eta_1,...,\eta_{m-1},\eta_m)=\partial_\sigma a^i(0)+\int_0^1 \frac{d}{dt}\partial_\sigma a^i(t\eta_1,...,t\eta_{m-1},t\eta_m)dt\nonumber\\
&=&\partial_\sigma a^i(0)+\int_0^1\{\sum_{j=1}^n\sum_{p=0}^{m}\sum_{k+l=p}\partial_\sigma\partial_{\eta^{kl}_{p,j}} a^i(\cdot)\eta^{kl}_{p,j}+\partial_\sigma\bar\partial_{\eta^{kl}_{p,j}} a(\cdot)\bar\eta^{kl}_{p,j}\}dt
\end{eqnarray}
Notice that for $(\eta_1,...,\eta_{m-1},\eta_m)\in E(R,\gamma)$, we have $ |\eta^{kl}_{p,j}|\leq 6^mR^{m-p}\gamma,k+l=p; p=1,...,m-1,j=1,...,n$ and $|\eta^{kl}_{m,j}|\leq \gamma$.
Here we fix a $\gamma_0>0$ so that $6^mR^m\gamma_0\leq R'$ as in (27) and let $\tau=|\partial_\eta a(0)|+|\bar\partial_\eta a(0)|$. Then for $\gamma<\gamma_0$
we have easily from (58)
$$|\partial_\sigma a^i|_{E(R,\gamma)}\leq \tau+C_9\|a\|_{C^2(E(R,\gamma_0))}\gamma,$$
which implies $A(R,\gamma)\leq \tau+C_9\|a\|_{C^2(E(R,\gamma_0))}\gamma$. Here $C_9$ is a constant dependent only on $m,n$ and $\|a\|_{C^2(E(R,\gamma_0))}$ denotes
the maximum norm of second derivatives of $a=(a^1, ...,a^n)$ on $E(R,\gamma_0)$. Similarly, we have $B(R,\gamma)\leq \tau+C_9\|a\|_{C^2(E(R,\gamma_0))}\gamma$. On the other hand, we have
\begin{eqnarray}
&&\partial_\sigma a^i(\eta_1,...,\eta_{m-1},\eta_m)-\partial_\sigma a^i(\eta'_1,...,\eta'_{m-1},\eta'_m)\nonumber\\
&=&\int_0^1 \frac{d}{dt}\partial_\sigma a^i(t\eta_1+(1-t)\eta'_1,...,t\eta_{m-1}+(1-t)\eta'_{m-1},t\eta_m+(1-t)\eta'_m)dt\nonumber\\
&=&\int_0^1\{\sum_{j=1}^n\sum_{p=0}^{m}\sum_{k+l=p}\partial_\sigma\partial_{\eta^{kl}_{p,j}} a^i(\cdot)(\eta^{kl}_{p,j}-\eta'^{kl}_{p,j})+\partial_\sigma\bar\partial_{\eta^{kl}_{p,j}} a(\cdot)(\bar\eta^{kl}_{p,j}-\bar\eta'^{kl}_{p,j})\}dt
\end{eqnarray}
It follows from (59) that
$$H_\alpha^A[R, \gamma]\leq C_9 \|a\|_{C^2(E(R,\gamma_0))}\gamma^{1-\alpha}.$$
Similarly, we have
$$H_\alpha^B[R, \gamma]\leq C_9 \|a\|_{C^2(E(R,\gamma_0))}\gamma^{1-\alpha},$$
$$H_1^A[R, \gamma]\leq C_9 \|a\|_{C^2(E(R,\gamma_0))},$$
$$H_1^B[R, \gamma]\leq C_9 \|a\|_{C^2(E(R,\gamma_0))}.$$
We can get from (51), replacing $\gamma$ by $\gamma_0$ in the sum,
$$\delta_2(R,\gamma)\leq C(m,R)\tau+C(R, \gamma_0, m, \alpha, \|a\|_{C^2(E(R,\gamma_0))})(\gamma+\gamma^{1-\alpha}).$$
On the other hand,
$$\delta_1(R,\gamma)=B(R,\gamma)+\gamma2^{m+1}H_1^B[R,\gamma]+\delta_2(R,\gamma),$$
which implies
$$\delta_1(R,\gamma)\leq C(m,R)\tau+C(R, \gamma_0, m, \alpha, \|a\|_{C^2(E(R,\gamma_0))})(\gamma+\gamma^{1-\alpha}).$$
Therefore  we have
$$\delta(R,\gamma)\leq C(R,m,\alpha)\tau+C(R, \gamma_0, m, \alpha, \|a\|_{C^2(E(R,\gamma_0))})(\gamma+\gamma^{1-\alpha})$$
$$\eta(R,\gamma)\leq C(R,m,\alpha)|a(0)|+\gamma (C(R,m,\alpha)\tau+C(R, \gamma_0, m, \alpha, \|a\|_{C^2(E(R,\gamma_0))})(\gamma+\gamma^{1-\alpha})).$$
Now we are ready to determine constants needed to insure existence of solutions.
First we set
$$\tau=\frac{1}{8C(R,m,\alpha)}.$$
Second, choose $\gamma_1$ such that
$$C(R, \gamma_0, m, \alpha, \|a\|_{C^2(E(R,\gamma_0))})(\gamma_1+\gamma_1^{1-\alpha})=\frac{1}{8}$$
Finally, set $\lambda$ to be
$$\lambda=\frac{\gamma_1}{4C(R,m,\alpha)}.$$
Finally, we have $\delta(R,\gamma_1)\leq 1/4, \eta(R,\gamma_1)\leq \gamma'/2$.
To show the existence, let $\psi(z,\bar z)$ be any homogenous polynomial of degree $m$ without term $z^\mu\bar z^\nu$ such that
$\|\psi\|^{(m)}\leq\frac{\gamma_1}{2}$. Consider the equation on $A(R,\gamma_1)$
$$u=\psi+\mathbf{\Theta}(u).$$
By the construction, we have
$$\|\mathbf{\Theta}(f)-\mathbf{\Theta}(g)\|^{(m)}
\leq \frac{1}{4}\|f-g\|^{(m)},$$
$$\|\mathbf{\Theta}(f)\|^{(m)}\leq \frac{\gamma_1}{2}$$
for $f, g\in A(R,\gamma_1)$. Lemma 4.1 applies to conclude the existence of solutions.

\subsection{Proof of Theorem 1.5}
Let $P(m,\epsilon)$ be the set of polynomial maps $\co:\to\co^n$ of degree less or equal to $m-1$ whose coefficients are less than $\epsilon$ in absolute value.
Let $p\in P(m,\epsilon)$. We consider
$$b(z,\tilde u, \mathcal{D}^1\tilde u,..., \mathcal{D}^{m}\tilde u)=a(\tilde u-p(z),\mathcal{D}^1\tilde u-\mathcal{D}^1p(z),..., \mathcal{D}^{m-1}\tilde u-\mathcal{D}^{m-1}p(z), \mathcal{D}^{m}\tilde u).$$
We are looking for solutions for
$$\partial^\mu\bar\partial^\nu \tilde u=b(z,\tilde u, \mathcal{D}^1\tilde u,..., \mathcal{D}^{m}\tilde u)$$
with $\partial^i\bar\partial^j \tilde u(0)=0$ for $i+j\leq m-1$. Then $u=\tilde u+p(z)$ is the solution of the original system with desired property because $\partial^\mu\bar\partial\nu \tilde u=\partial^\mu\bar\partial\nu u$. The proof of Theorem 1.5 will essentially work provided we estimate $C(R,\gamma)$,$H_\alpha^C[R,\gamma]$, and $H_1^C[R,\gamma]$ for $b$, in terms of $\epsilon$ and those constants for $a$. First we modify the equation (27) to
$$6^m R^m\gamma<\frac{\gamma'}{2}$$
so that for small enough $\epsilon$, we still have
$$6^m R^m\gamma+\epsilon<\gamma'.$$
With these choices of $\gamma,\epsilon$, now  we have
$$b(z, \eta_0,\eta_1,...,\eta_{m-1},\eta_m)=a(\eta_0+p(z),\eta_1+\mathcal{D}^1 p(z),...,\eta_{m-1}+\mathcal{D}^{m-1}p(z),\eta_m),$$
which is well defined on $\Omega$.
First we observe that there is constant $C=C(m,R)$ such that if $|z|\leq R$, then
$$|\mathcal{D}^j(p(z))|\leq C(m,R)\epsilon,$$
$$|\nabla (\mathcal{D}^j(p(z)))|\leq C(m,R)\epsilon$$
for $j=0,1,...,m-1.$ Now we are ready to estimate constants for $\delta(R,\gamma)$, $\eta(R,\gamma)$.
$$\frac{\partial b^i}{\partial z}=\sum_{j=1}^n\sum_{q=0}^{m-1}\sum_{k+l=q}\partial_{\eta_{q,j}^{kl}}a^i(\cdot)(\mathcal{D}^q p(z))^{kl}_j+
\sum_{j=1}^n\sum_{k+l=m}\partial_{\eta_{m,j}^{kl}}a^i(\cdot)$$
$$\sum_{j=1}^n\sum_{q=0}^{m-1}\sum_{k+l=q}\partial_{\bar\eta_{q,j}^{kl}}a^i(\cdot)\overline{(\mathcal{D}^q p(z))}^{kl}_j+
\sum_{j=1}^n\sum_{k+l=m}\partial_{\bar\eta_{m,j}^{kl}}a^i(\cdot)$$
It follows that
$$\bigg|\frac{\partial b^i}{\partial z}\bigg|_{E(R,\gamma)}\leq C(m,R)(\epsilon A(R,\gamma+C\epsilon)+B(R,\gamma+C\epsilon)).$$
Therefore $C(R,\gamma)$ for $b$ satisfies
$$C(R,\gamma)\leq C(m,R)(\epsilon A(R,\gamma+C\epsilon)+B(R,\gamma+C\epsilon)).$$
Now we need to estimate H\"older norm in $z$ on $E(R,\gamma)$.
$$\frac{\partial b^i}{\partial z}(z,\eta_0,\eta_1,...,\eta_{m-1},\eta_m)-\frac{\partial b^i}{\partial z}(z',\eta_0,\eta_1,...,\eta_{m-1},\eta_m)$$
$$=\sum_{j=1}^n\sum_{q=0}^{m-1}\sum_{k+l=q}\partial_{\eta_{q,j}^{kl}}a^i(\eta_0+p(z),\eta_1+\mathcal{D}^1 p(z),...,\eta_{m-1}+\mathcal{D}^{m-1}p(z),\eta_m)(\mathcal{D}^q p(z))^{kl}_j$$
$$+
\sum_{j=1}^n\sum_{k+l=m}\partial_{\eta_{m,j}^{kl}}a^i(\eta_0+p(z),\eta_1+\mathcal{D}^1 p(z),...,\eta_{m-1}+\mathcal{D}^{m-1}p(z),\eta_m)$$
$$+\sum_{j=1}^n\sum_{q=0}^{m-1}\sum_{k+l=q}\partial_{\bar\eta_{q,j}^{kl}}a^i(\eta_0+p(z'),\eta_1+\mathcal{D}^1 p(z'),...,\eta_{m-1}
+\mathcal{D}^{m-1}p(z'),\eta_m)\overline{(\mathcal{D}^q p(z'))}^{kl}_j$$
$$+
\sum_{j=1}^n\sum_{k+l=m}\partial_{\bar\eta_{m,j}^{kl}}a^i(\eta_0+p(z),\eta_1+\mathcal{D}^1 p(z),...,\eta_{m-1}+\mathcal{D}^{m-1}p(z),\eta_m)$$
$$-\sum_{j=1}^n\sum_{q=0}^{m-1}\sum_{k+l=q}\partial_{\eta_{q,j}^{kl}}a^i(\eta_0+p(z'),\eta_1+\mathcal{D}^1 p(z'),...,\eta_{m-1}+\mathcal{D}^{m-1}p(z'),\eta_m)(\mathcal{D}^q p(z'))^{kl}_j$$
$$-
\sum_{j=1}^n\sum_{k+l=m}\partial_{\eta_{m,j}^{kl}}a^i(\eta_0+p(z'),\eta_1+\mathcal{D}^1 p(z'),...,\eta_{m-1}+\mathcal{D}^{m-1}p(z'),\eta_m)$$
$$-\sum_{j=1}^n\sum_{q=0}^{m-1}\sum_{k+l=q}\partial_{\bar\eta_{q,j}^{kl}}a^i(\eta_0+p(z'),\eta_1+\mathcal{D}^1 p(z'),...,\eta_{m-1}
+\mathcal{D}^{m-1}p(z'),\eta_m)\overline{(\mathcal{D}^q p(z'))}^{kl}_j$$
$$-
\sum_{j=1}^n\sum_{k+l=m}\partial_{\bar\eta_{m,j}^{kl}}a^i(\eta_0+p(z'),\eta_1+\mathcal{D}^1 p(z'),...,\eta_{m-1}+\mathcal{D}^{m-1}p(z'),\eta_m)$$
We have
$$|\mathcal{D}^jp(z)-\mathcal{D}^jp(z')|\leq C(m,R,\alpha)\epsilon|z-z'|$$
$$H_\alpha\bigg[\frac{\partial b^i}{\partial z}\bigg]_{E(R,\gamma)}\leq \epsilon^\alpha C(m,R,\alpha)(H_\alpha^A[R,\gamma+C\epsilon]+A(R,\gamma+C\epsilon)+
H_\alpha^B[R,\gamma+C\epsilon]+B(R,\gamma+C\epsilon).$$
Therefore $H_\alpha^C[R,\gamma]$ for $b$ satisfies
$$H_\alpha^C[R,\gamma]\leq \epsilon^\alpha C(m,R,\alpha)(H_\alpha^A[R,\gamma+C\epsilon]+A(R,\gamma+C\epsilon)+
H_\alpha^B[R,\gamma+C\epsilon]+B(R,\gamma+C\epsilon).$$
Similarly,
$$H_1\bigg[\frac{\partial b^i}{\partial z}\bigg]_{E(R,\gamma)}\leq \epsilon C(m,R)(H_1^A[R,\gamma+C\epsilon]+A(R,\gamma+C\epsilon)+
H_1^B[R,\gamma+C\epsilon]+B(R,\gamma+C\epsilon)$$
Therefore $H_1^C[R,\gamma]$ for $b$ satisfies
$$H_1^C[R,\gamma]\leq \epsilon C(m,R)(H_1^A[R,\gamma+C\epsilon]+A(R,\gamma+C\epsilon)+
H_1^B[R,\gamma+C\epsilon]+B(R,\gamma+C\epsilon)$$
For all other constants of $b$, we only need replace by those of $a$ on $E(R, \gamma+C\epsilon)$. For $b$, $\delta(R,\gamma)$ is virtually unchanged and $\eta(R,\gamma)$ has new constants obtained above. Then all proof of Theorem 1.4 will go though with little modification. We omit the details.

\section{Real systems-Proof of Theorem 1.6,1.7}
In order to obtain real solutions of systems, we need to confine to the case $\mu=\nu$. We also have to work on Banach space over $\re$. To do so, we
define
$$RC_0^{k+\alpha}(D)=\{f: f\in C_0^{k+\alpha}(D) \mbox{ and } f \mbox{ is real-valued }\}$$
This is Banach space with norm $\|\cdot\cdot\cdot\|^{(k)}$ just as defined for complex valued functions.
We notice that $\Delta^m u=4^m\partial^m\bar\partial^m u$ and the operator $\Xi=\overline T^m T^m+T^m\overline T^m$ is real-valued; that is, if $f$ is real-valued so is $\Xi(f)$.
Let
$$\omega^i(f)=\overline T^m T^mA^i(x,y, \nabla f, \nabla^2 f,...,\nabla^{2m-1}f)$$

In order to prove the existence, we consider equation
$$u^i=\psi+\Re \Theta^i(u)=\psi+\frac{1}{2}\{\Theta^i(u)+\overline{\Theta^i(u)}\}$$
where
$$\Theta^i(f)=\omega^i(f)-\sum_{l=0}^{2m-1}\frac{1}{l!}\sum_{i+j=l}[\partial^i\bar\partial^j\omega^i(f)](0)\zeta^i\bar\zeta^j
-\frac{1}{(2m)!}\sum_{i+j=2m}[\partial^i\bar\partial^j\omega^i(f)](0)\zeta^i\bar\zeta^j+\frac{1}{m!^2}[\partial^m\bar\partial^m\omega^i(f)](0)\zeta^m\bar\zeta^m$$
for $f\in [RC_0^{k+\alpha}(D)]^n$ and where $\psi$ is real valued  and satisfies $\Delta^m \psi=0,$ and $\partial^i\bar\partial^j \psi(0)=0$ for $i+j\leq 2m-1$. Therefore we define
$$\Re \Theta: [RC_0^{k+\alpha}(D)]^n\to [RC_0^{k+\alpha}(D)]^n,$$
$$\Re \Theta(f)=(\Re \Theta^1(f), ..., \Re \Theta^n(f)).$$
With $\mathbf{A}(R,\gamma)=\{f\in [RC_0^{k+\alpha}(D)]^n|\|f\|^{(2m)}\leq \gamma\}$, we can apply the same arguments as in the proof of Theorem
to get desired results.
\section{Holomorphic system}
The key observation is the following lemma. Let $$ HC^{k+\alpha}(D)=\{f: f\in C^{k+\alpha}(D) \mbox{ and holomorphic in } \mathrm{Int}(D)\}.$$
\begin{lem}
The operator $\overline T$ maps $HC^{k+\alpha}(D)$ into $HC^{k+1+\alpha}(D).$
\end{lem}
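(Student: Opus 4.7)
The plan is to separate the two conclusions of the lemma: the $C^{k+1+\alpha}$ regularity (which is purely a smoothing property of $\overline T$ and makes no use of holomorphy) and the holomorphy of $\overline T f$ on $\mathrm{Int}(D)$ (which uses the holomorphy of $f$).

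For the regularity, I will invoke the last assertion of Lemma 3.3, which says $Tg\in C^{k+1+\alpha}(D)$ whenever $g\in C^{k+\alpha}(D)$. Since $\overline T f=\overline{T(\bar f)}$ and complex conjugation preserves the H\"older classes, applying Lemma 3.3 with $g=\bar f\in C^{k+\alpha}(D)$ gives $T(\bar f)\in C^{k+1+\alpha}(D)$, hence $\overline T f\in C^{k+1+\alpha}(D)$. This step requires no holomorphy hypothesis on $f$ at all.

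For the holomorphy, I will write out
\[
T(\bar f)(w)\;=\;\frac{-1}{2\pi i}\int_D \frac{\overline{f(\zeta)}}{\zeta-w}\,d\bar\zeta\wedge d\zeta
\]
and apply Lemma 3.5 with $\phi=f$ and $\triangle=D$ (a disk centered at the origin) to conclude that $T(\bar f)(w)$ is anti-holomorphic in $w\in\mathrm{Int}(D)$. Conjugating, $\overline T f(w)=\overline{T(\bar f)(w)}$ is holomorphic in $\mathrm{Int}(D)$, which is the remaining assertion.

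The only technical wrinkle, and the step I expect to require the most care, is that the proof of Lemma 3.5 as given expands $\phi$ as a power series valid on all of $\triangle$, whereas an element of $HC^{k+\alpha}(D)$ is only assumed holomorphic on $\mathrm{Int}(D)$ and merely $C^{k+\alpha}$ up to the boundary. To bridge this I will apply Lemma 3.5 on each subdisk $\triangle_r=\{|\zeta|\le r\}$ with $r<R$, where $f$ is holomorphic on a neighborhood of $\triangle_r$; the resulting truncated integrals are anti-holomorphic in $w\in\mathrm{Int}(\triangle_r)$. Letting $r\uparrow R$, the integrand $\overline{f(\zeta)}/(\zeta-w)$ is dominated (for $w$ in any compact subset of $\mathrm{Int}(D)$) by $|f|_D\cdot|\zeta-w|^{-1}$, which is integrable in $\zeta$ over $D$, so dominated convergence gives pointwise convergence to the full integral over $D$; on compact subsets of $\mathrm{Int}(D)$ the convergence is uniform, and a uniform limit of anti-holomorphic functions is anti-holomorphic. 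This yields the holomorphy of $\overline T f$ and completes the argument.
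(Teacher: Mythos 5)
Your proof is correct and follows essentially the same route as the paper: both use the identity $\overline T f=\overline{T(\bar f)}$, the lemma that $\int_\triangle \overline{\varphi(\zeta)}/(\zeta-w)\,d\bar\zeta\wedge d\zeta$ is anti-holomorphic for holomorphic $\varphi$, and the H\"older smoothing property $T:C^{k+\alpha}(D)\to C^{k+1+\alpha}(D)$. The one place you go beyond the paper's very terse two-line argument is in noticing that the anti-holomorphy lemma, as proved there by a power-series expansion of $\varphi$ over all of $\triangle$, really presupposes holomorphy on a neighborhood of the closed disk, whereas $f\in HC^{k+\alpha}(D)$ is holomorphic only on $\mathrm{Int}(D)$; your exhaustion by subdisks $\triangle_r$ with $r\uparrow R$, combined with the dominated-convergence and uniform-on-compacta argument, correctly closes this gap, which the paper passes over silently.
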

\begin{proof}
By Lemma 3.4, we have $T(\bar f)$ is anti-holomorphic if $f$ is holomorphic. By the definition, $\overline T(f)=\overline {T(\bar f)}$. The proof follows.

\end{proof}
In order to prove the theorem, we first notice that uniqueness is a result of unique continuation property of holomorphic functions. We let
$$HC_0^{k+\alpha}(D)=\{f: f\in C_0^{k+\alpha}(D) \mbox{ and holomorphic in } \mathrm{Int}(D)\}.$$
We first note that
equipped with $\|\cdot\cdot\cdot\|^{(k)}$,$HC_0^{k+\alpha}(D)$ is a Banach subspace of $C_0^{k+\alpha}(D)$. We will work on this subspace to prove existence
of solutions. Indeed, we define for $i=1,...,n$
$$\omega^i(f)=\overline T^m H(z,f, \partial f, ..., \partial^{m-1} f)$$
where $f\in  HC_0^{m+\alpha}(D)$. Since $H$ is holomorphic map, so $\omega^i(f)$ is holomorphic by Lemma ? and $\omega^i(f)\in HC^{m+\alpha}(D).$
We consider the equation
$$f^i=\Theta^i(f)$$
where
$$\Theta^i(f)=\overline T^m H(z,f, \partial f, ..., \partial^{m-1} f)-\sum_{k=0}^{m-1}\frac{1}{k!}[\partial^k \omega^i(f)](0).$$
By the construction, we have $\Theta^i(f)\in HC_0^{m+\alpha}(D)$. Then the following map is well defined
$$\Theta:HC_0^{m+\alpha}(D)\to HC_0^{m+\alpha}(D)$$
$$\Theta(f)=(\Theta^1(f),...,\Theta^n(f))$$
Now given $\gamma>0$, we define $\mathbf{A}(R,\gamma)=\{\|\partial^m f\|\leq \gamma\}$. We notice that the norm $\|f\|^{(m)}$ reduces to
$\|\partial^m f\|$ since $f$ is holomorphic. With these preparations, all proofs would apply to holomorphic situations. We would not repeat the
arguments. We note that all solutions are holomorphic disks in $\co^n$ that satisfy the differential system as in Theorem 1.12. If the system has a geometric information
then they become geometric objects.
\section{Kobayashi metric on a Riemannian manifold}
\subsection{Proof of Theorem B}
Since it is local, we choose charts at $z_0$ and $p$ respectively, as $(D, z)$, $(V, w)$. Let $N$ be given with metric tensor $(g_{ij})$. Then a map $f:D\to N$ of class $C^2$ is harmonic iff
$$\frac{\partial^2 f^i}{\partial z\partial \bar z}+\Gamma^i_{jk}(f(z))\frac{\partial f^j}{\partial z}\frac{\partial^k}{\partial\bar z}=0 \mbox{ for } i=1,..., \mathrm{dim}N.$$
Here $\Gamma^i_{jk}$ is the Christoffel symbols of $N$. The proof follows from Theorem 1.5 immediately.
\subsection{Definition of Kobayashi metric}
Here we introduce the notion of Kobayaashi metric on the tangent bundle of a Riemannian manifold following the one on complex manifolds by holomorphic curves.
\begin{defn} Let $N$ be a Riemannian manifold of class $C^3$ with metric $g$. Let $p\in N$, and $v\in T_pN$. We define the Kobayashi metric on $TN$ as follows
$$K_N(p,v)=\inf\{\frac{1}{R}: f:D(R):\to N \mbox{ harmonic map of class } C^2 \mbox{ such that } f(0)=p, df(0)(\frac{\partial}{\partial x})=v\},$$
where $D(R)=\{z\in\co: |z|<R\}$ with $z=x+iy$.
\end{defn}
The first simple result is
\begin{prop}
Let $N$ be a smooth Riemannian manifold. Then the Kobayahsi metric is well defined on the tangent bundle $TN$.
\end{prop}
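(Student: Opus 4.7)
The plan is to reduce the proposition to Theorem C (equivalently, to the harmonic-map existence result established in Section 8.1). Unwinding Definition 8.1, the claim that $K_N(p,v)$ is well defined amounts to two assertions: first, that for every $(p,v) \in TN$ the set
\[
\mathcal{F}(p,v) = \{R > 0 : \exists\, f \in C^2(D(R),N) \text{ harmonic, } f(0)=p,\ df(0)(\partial/\partial x)=v\}
\]
is non-empty, so that $K_N(p,v) = \inf\{1/R : R \in \mathcal{F}(p,v)\}$ is a well-defined element of $[0,\infty)$; and second, that this value depends only on the intrinsic data $(N,g,p,v)$, not on auxiliary choices such as a local chart.

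For the first point I would invoke Theorem C: given any $p \in N$ and $v \in T_pN$, it produces an $R_0 > 0$ and a harmonic map $\phi:D(R_0) \to N$ of class $C^2$ with $\phi(0) = p$ and $d\phi(0)(\partial/\partial x) = v$. Hence $R_0 \in \mathcal{F}(p,v)$, and therefore $0 \le K_N(p,v) \le 1/R_0 < \infty$. In particular, for $v = 0$ the constant map $f \equiv p$ lies in $\mathcal{F}(p,0)$ for every $R > 0$, so $K_N(p,0) = 0$. It is also worth noting that if $R \in \mathcal{F}(p,v)$ then every $R' \in (0,R]$ lies in $\mathcal{F}(p,v)$ (by restriction), so $\mathcal{F}(p,v)$ is an interval of the form $(0,R^*]$, $(0,R^*)$, or $(0,\infty)$, and $K_N(p,v) = 1/R^*$ (with the convention $1/\infty = 0$).

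For the second point I would observe that both the harmonicity equation and the conditions $f(0) = p$, $df(0)(\partial/\partial x) = v$ are coordinate-free on the target $N$. In a local chart on $N$ the harmonic equation takes the form $\partial_z\partial_{\bar z} f^i + \Gamma^i_{jk}(f)\,\partial_z f^j\,\partial_{\bar z} f^k = 0$ (as used in Section 8.1), and the classical transformation law of the Christoffel symbols under change of coordinates on $N$ renders this equation invariant. Hence $\mathcal{F}(p,v)$, and therefore $K_N(p,v)$, depends only on the intrinsic data $(N,g,p,v)$.

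There is no substantive obstacle here: the content of the proposition is exactly the existence assertion of Theorem C, together with the routine verification that the defining conditions are intrinsic. The one point worth flagging --- not an obstacle, but a limitation of the definition --- is that $K_N$ is in general only a pseudo-metric: degeneracy at $v = 0$ is built in, and non-degeneracy for $v \neq 0$ would require further geometric hypotheses on $N$, which is a question beyond the scope of this proposition.
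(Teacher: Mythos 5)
The paper does not actually spell out a proof of this proposition: after Definition 8.1 it simply states the proposition and then moves on (``We hope to make further study of Kobayashi metric in a future paper''). So there is no paper proof to compare against directly; the proposition is clearly intended to be an immediate corollary of Theorem C, and your proof fills in exactly the argument the author left implicit. Your reduction is correct: non-emptiness of $\mathcal{F}(p,v)$ follows from Theorem C (proved in Section 8.1 via Theorem 1.5), which forces $0 \le K_N(p,v) < \infty$, and the chart-independence point is routine since harmonicity is an intrinsic condition on maps into $(N,g)$ and the infimum is taken over disks in the fixed domain $\co$. Your additional observations (that $\mathcal{F}(p,v)$ is a downward-closed interval of radii, and that $K_N$ is a priori only a pseudo-metric since $K_N(p,0)=0$) are accurate and go slightly beyond what the paper makes explicit, but they do not change the substance. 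One minor caveat worth being aware of: the proposition as stated assumes $N$ is \emph{smooth}, which is strictly stronger than the $C^3$ hypothesis of Theorem C, so there is no regularity gap in your invocation of that theorem.
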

We hope to make further study of Kobayashi metric in a future paper.

\bigskip
\appendix{Appendix}
\begin{lem}
\begin{eqnarray}
Let
\partial
_x^\mu&=&(\partial+\bar\partial)^\mu=\sum_{k=0}^\mu {\mu\choose k}\partial^k\bar\partial^{\mu-k},\nonumber\\
\partial_y^\nu&=&i^\nu(\partial-\bar\partial)^\nu=i^\nu\sum_{l=0}^\nu (-1)^{\nu-l}{\nu\choose l}\partial^l\bar\partial^{\nu-l}.\nonumber
\end{eqnarray}
It holds that for $\mu,\nu\geq 0$,
$$\partial_x^\mu\partial_y^\nu=i^\nu\sum_{j=0}^{\mu+\nu}A_j\partial^{j}\bar\partial^{\mu+\nu-j}$$
where
$$A_j=\sum_{l={\max\{0,j-\mu\}}}^{\min\{\nu,j\}}{\mu\choose {j-l}}{\nu\choose l}(-1)^{\nu-l}.$$
\end{lem}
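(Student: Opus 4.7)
The plan is entirely computational: expand both operators via the binomial theorem and collect terms by the total order of $\partial$.

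First I would verify the two displayed identities for $\partial_x^\mu$ and $\partial_y^\nu$. From $\partial=\tfrac12(\partial_x-i\partial_y)$ and $\bar\partial=\tfrac12(\partial_x+i\partial_y)$ one reads off $\partial_x=\partial+\bar\partial$ and $\partial_y=i(\partial-\bar\partial)$. Since $\partial$ and $\bar\partial$ are partial derivatives with respect to independent (complex) coordinates, they commute on sufficiently smooth functions, so the binomial theorem applies:
\begin{eqnarray*}
\partial_x^\mu=(\partial+\bar\partial)^\mu=\sum_{k=0}^\mu\binom{\mu}{k}\partial^k\bar\partial^{\mu-k},\qquad
\partial_y^\nu=i^\nu(\partial-\bar\partial)^\nu=i^\nu\sum_{l=0}^\nu(-1)^{\nu-l}\binom{\nu}{l}\partial^l\bar\partial^{\nu-l}.
\end{eqnarray*}

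Next I would multiply the two expressions. Again using that $\partial$ and $\bar\partial$ commute, any term $\partial^k\bar\partial^{\mu-k}$ composed with $\partial^l\bar\partial^{\nu-l}$ equals $\partial^{k+l}\bar\partial^{\mu+\nu-k-l}$, so
\begin{eqnarray*}
\partial_x^\mu\partial_y^\nu=i^\nu\sum_{k=0}^\mu\sum_{l=0}^\nu(-1)^{\nu-l}\binom{\mu}{k}\binom{\nu}{l}\partial^{k+l}\bar\partial^{\mu+\nu-k-l}.
\end{eqnarray*}

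The final step is to reindex by $j=k+l$, which runs from $0$ to $\mu+\nu$. For fixed $j$, the constraints $0\le k\le\mu$ and $0\le l\le\nu$ with $k=j-l$ force $l$ to lie in $\max\{0,j-\mu\}\le l\le\min\{\nu,j\}$. Grouping the terms yields exactly
\begin{eqnarray*}
\partial_x^\mu\partial_y^\nu=i^\nu\sum_{j=0}^{\mu+\nu}A_j\,\partial^{j}\bar\partial^{\mu+\nu-j},\qquad
A_j=\sum_{l=\max\{0,j-\mu\}}^{\min\{\nu,j\}}\binom{\mu}{j-l}\binom{\nu}{l}(-1)^{\nu-l},
\end{eqnarray*}
which is the asserted formula. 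There is no real obstacle here; the only thing to be careful about is the range of $l$ in the inner sum, which comes directly from intersecting the two natural constraints $l\in[0,\nu]$ and $j-l\in[0,\mu]$.
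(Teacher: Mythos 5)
Your proof is correct, and it is noticeably simpler than the paper's. You and the paper start identically: expand $\partial_x^\mu$ and $\partial_y^\nu$ by the binomial theorem (using commutativity of $\partial$ and $\bar\partial$), multiply, and arrive at the double sum $i^\nu\sum_{k=0}^\mu\sum_{l=0}^\nu\binom{\mu}{k}\binom{\nu}{l}(-1)^{\nu-l}\partial^{k+l}\bar\partial^{\mu+\nu-k-l}$. Where you diverge is in passing from the double sum to the single sum indexed by $j$. You do the natural thing: reindex by $j=k+l$ and observe that the constraints $0\le k\le\mu$, $0\le l\le\nu$, $k=j-l$ force exactly $\max\{0,j-\mu\}\le l\le\min\{\nu,j\}$, which is the stated range in $A_j$. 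The paper instead pauses at this point, declares ``Now we prove the formula by induction,'' and runs an induction on $m=\mu+\nu$: it peels off one $\partial_x$ from $\partial_x^\mu\partial_y^\nu=\partial_x\cdot\partial_x^{\mu-1}\partial_y^\nu$, applies the inductive hypothesis, and then verifies via Pascal's rule $\binom{n+1}{k+1}=\binom{n}{k}+\binom{n}{k+1}$ (with a case split on the sign of $j-\mu$ and on the relation between $\nu$ and $j-1$) that the resulting coefficients $\tilde A_j=B_{j-1}+B_j$ match the claimed $A_j$. That induction is considerably more work than your one-line reindexing and buys nothing extra; the direct collection of terms is all that is needed, and you execute it correctly, including the bookkeeping on the summation limits.
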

\begin{proof} First we convert partial derivatives to complex partial derivatives. Indeed,
\begin{eqnarray}
\partial_x^\mu\partial_y^\nu&=&i^\nu\sum_{k=0}^\mu {\mu\choose k}\partial^k\bar\partial^{\mu-k}\sum_{l=0}^\nu (-1)^{\nu-l}{\nu\choose l}\partial^l\bar\partial^{\nu-l}\nonumber\\
&=&i^\nu\sum_{k=0}^\mu\sum_{l=0}^\nu{\mu\choose k}{\nu\choose l}(-1)^{\nu-l}\partial^{k+l}\bar\partial^{\mu+\nu-k-l}\nonumber\\
&=&i^\nu\sum_{j=0}^{\mu+\nu}A_j\partial^{j}\bar\partial^{\mu+\nu-j}
\end{eqnarray}
Now we prove the formula by induction. Let $m=\mu+\nu$. If $m=1$, then either $\mu=1$ or $\nu=1$. The formula is obvious.
Now assume the formula is true for $m$. Now we want to show it is also true for $\mu+\nu=m+1$. Below we assume $\mu+\nu=m+1$. If $\mu=0$ then $\nu=m+1$, and the formula is obvious.
So we assume below that $\mu\geq 1$. Thus we have $\mu-1+\nu=m$. We have
$$\partial_x^\mu\partial_y^\nu=\partial_x\partial_x^{\mu-1}\partial_y^\nu=i^\nu\partial_x
\sum_{j=0}^{\mu+\nu-1}B_j\partial^{j}\bar\partial^{\mu+\nu-1-j}=i^\nu\partial_x
\sum_{j=0}^{m}B_j\partial^{j}\bar\partial^{m-j}$$
where
$$B_j=\sum_{l={\max\{0,j-\mu+1\}}}^{\min\{\nu,j\}}{\mu-1\choose {j-l}}{\nu\choose l}(-1)^{\nu-l}.$$
Therefore,
$$\partial_x^\mu\partial_y^\nu=i^\nu\{
B_0\bar\partial^{m+1}+\sum_{j=1}^{m}(B_{j-1}+B_j)\partial^{j}\bar\partial^{m+1-j}+B_m\partial^{m+1}\}$$
$$=i^\nu\sum_{j=1}^{m+1}\tilde A_j\partial^{j}\bar\partial^{m+1-j}$$
where
$\tilde A_0=B_0,\tilde A_j=B_{j-1}+B_j$ for $ j= 1,...,m,$ and $\tilde A_{m+1}=B_m$. Now it suffices to show that for $0\leq j\leq m+1$
$$\tilde A_j=\sum_{l={\max\{0,j-\mu\}}}^{\min\{\nu,j\}}{\mu\choose {j-l}}{\nu\choose l}(-1)^{\nu-l}.$$
It is easy to see that this true for $j=0$, and $j=m+1$. To show it is also true for $j=1,...,m$, we have to show
$$\sum_{l={\max\{0,j-\mu\}}}^{\min\{\nu,j-1\}}{\mu-1\choose {j-1-l}}{\nu\choose l}(-1)^{\nu-l}+
\sum_{l={\max\{0,j-\mu+1\}}}^{\min\{\nu,j\}}{\mu-1\choose {j-l}}{\nu\choose l}(-1)^{\nu-l}$$
$$=\sum_{l={\max\{0,j-\mu\}}}^{\min\{\nu,j\}}{\mu\choose {j-l}}{\nu\choose l}(-1)^{\nu-l}$$
Now we consider two cases:
If $j-\mu+1>0$, then $j-\mu\geq 0$. The identity becomes
$$\sum_{l=j-\mu}^{\min\{\nu,j-1\}}{\mu-1\choose {j-1-l}}{\nu\choose l}(-1)^{\nu-l}+
\sum_{l={j-\mu+1}}^{\min\{\nu,j\}}{\mu-1\choose {j-l}}{\nu\choose l}(-1)^{\nu-l}$$
$$=\sum_{l=j-\mu}^{\min\{\nu,j\}}{\mu\choose {j-l}}{\nu\choose l}(-1)^{\nu-l}$$
If $j-\mu+1\leq 0$, then $j-\mu\leq -1$. The identity becomes
$$\sum_{l=0}^{\min\{\nu,j-1\}}{\mu-1\choose {j-1-l}}{\nu\choose l}(-1)^{\nu-l}+
\sum_{l=0}^{\min\{\nu,j\}}{\mu-1\choose {j-l}}{\nu\choose l}(-1)^{\nu-l}$$
$$=\sum_{l=0}^{\min\{\nu,j\}}{\mu\choose {j-l}}{\nu\choose l}(-1)^{\nu-l}$$
Finally the both identities can be verified by considering three cases: $\nu<j-1, \nu=j-1,$ and $\nu>j-1$ along with ${n+1\choose k+1}={n\choose k}+{n\choose k+1}$.
\end{proof}

{\bf Acknowledgment:} The research project of this paper was initiated while I was visiting at Huzhou Teachers College per invitation of
Prof. Zhangjian Hu  and Taishun Liu. The progress was made during my one-month lectures at Nanjing Normal University invited by Professor Prof. Hauihui Chen.
I also benefited from visits to Nanjing University, University of Science and Technology of China and Zhejing Normal University by invitations of
Prof. Liangwen Liao and Guanbin Ren, and Yang Liu. Hereby, I would like to take this opportunity to thank these host institutions for their warm hospitality.

Last, but not least, I would like to thank Prof. Zhihua Chen for his support over the years for my numerous visits to Tongji University.
\normalsize

\bigskip
Department of Mathematical Sciences

Indiana University - Purdue University Fort Wayne

Fort Wayne, IN 46805-1499, USA.

pan@ipfw.edu
\end{document}